\documentclass[11pt]{amsproc}

\usepackage{mathtext}
\usepackage[cp1251]{inputenc}

\usepackage{bm}

\usepackage[dvips]{graphicx}
\usepackage{amsmath}
\usepackage{amssymb}
\usepackage{amsxtra}

\usepackage{epsfig}
\usepackage{epic}
\usepackage{eepic}
\usepackage{graphics}
\usepackage{graphicx}
\usepackage{subfigure}

\usepackage{caption}
\captionsetup[figure]{labelfont=bf,labelsep=space}

\def\N{{{\Bbb N}}}
\def\Z{{{\Bbb Z}}}
\def\T{{{\Bbb T}}}
\def\R{{\Bbb R}}

\def\l{{\lambda }}
\def\a{{\alpha }}
\def\D{{\Delta }}

\def\a{{\alpha}}
\def\b{{\beta}}
\def\d{{\delta}}

\def\vp{{\varphi}}
\def\t{{\theta }}
\def\g{{\gamma }}
\def\w{{\omega }}

\def\){\right)}
\def\({\left(}

\numberwithin{equation}{section}
\setlength\textwidth{155mm}
\hoffset=-17mm
\setlength\textheight{230mm}
\voffset=-3mm

\newtheorem{corollary}{Corollary}[section]
\newtheorem{lemma}{Lemma}[section]
\newtheorem{theorem}{Theorem}[section]
\newtheorem{proposition}{Proposition}[section]
\newtheorem{remark}{Remark}[section]
\newtheorem{example}{Example}[section]

\newcommand{\ww}{\widetilde}

\par

\sloppy

\begin{document}

\title[]{Sharp $L_p$-error estimates for sampling operators}

\author[Yurii
Kolomoitsev]{Yurii
Kolomoitsev$^{\text{a, 1, 2}}$}
\address{Institute for Numerical and Applied Mathematics, G\"ottingen University, Lotzestr. 16-18, 37083 G\"ottingen, Germany}
\email{kolomoitsev@math.uni-goettingen.de}

\author[Tetiana
Lomako]{Tetiana
Lomako$^{\text{a, 1}}$}
\address{Institute for Numerical and Applied Mathematics, G\"ottingen University, Lotzestr. 16-18, 37083 G\"ottingen, Germany}
\email{t.lomako@math.uni-goettingen.de}

\thanks{$^\text{a}$Institute for Numerical and Applied Mathematics, G\"ottingen University, Lotzestr. 16-18, 37083 G\"ottingen, Germany}

\thanks{$^1$Supported by the German Research Foundation, project KO 5804/1-2}
\thanks{$^2$Support by the German Research Foundation in the framework of the RTG 2088}

\thanks{$^*$Corresponding author}

\thanks{E-mail address: kolomoitsev@math.uni-goettingen.de}

\date{\today}
\subjclass[2010]{41A05, 41A10, 41A25, 41A27, 42A15} \keywords{Sampling operators, Interpolation, Integral and averaged moduli of smoothness, $K$-functionals, Best one-sided approximation, Steklov means}

\begin{abstract}
We study approximation properties of linear sampling operators in the spaces $L_p$ for $1\le p<\infty$.
By means of the Steklov averages, we introduce a new measure of smoothness that simultaneously contains information on the smoothness of a function in~$L_p$ and discrete information on the behaviour of a function at sampling points. The new measure of smoothness enables us to improve and extend several classical results of approximation theory to the case of linear sampling operators. In particular, we  obtain matching direct and inverse approximation inequalities for sampling operators in~$L_p$, find the exact order of decay of the corresponding $L_p$-errors for particular classes of functions, and introduce a special $K$-functional and its realization suitable for studying smoothness properties of sampling operators.
\end{abstract}

\maketitle

\section{Introduction}

Let $\T=[0,1)$ and $1\le p<\infty$. By $L_p(\T)$ we denote the space of all 1-periodic measurable finite valued functions $f$ on $\T$ such that
$$
\|f\|_p=\(\int_\T |f(x)|^pdx\)^{1/p}<\infty.
$$
Note that we do not identify functions coinciding almost everywhere. Thus, any function in $L_p(\T)$ is given by its values at each point of $\T$.
By $B(\T)$ we denote the set of all measurable bounded 1-periodic functions with the norm
$\|f\|_\infty=\max_{x\in\T}|f(x)|.$ By $C(\T)$ we denote the set of all 1-periodic continuous functions.  As usual, $f\in W_p^r(\T)$ for some $r\in \N$ if $f^{(r-1)}$ is absolutely continuous and $f^{(r)}\in L_p(\T)$.
For $f\in L_p(\T)$, $1\le p<\infty$, and a set of points  $X_n=(x_{k,n})_{k=1}^{n}\subset\T$, the discrete semi-norm $\|f\|_{\ell_p(X_n)}$ is defined by
$$
\|f\|_{\ell_p(X_n)}=\(\frac1n\sum_{k=1}^n|f(x_{k,n})|^p\)^{1/p}.
$$

Let $\mathcal{T}_n$ denote the set of all trigonometric polynomials of degree at most $n$. The error of best approximation of $f\in L_p(\T)$ and the error of best one-sided approximation are given by
$$
E_n(f)_p=\inf\{\Vert f-T_n\Vert_p\,:\, T_n\in \mathcal{T}_n\}
$$
and
$$
\widetilde{E}_n(f)_p=\inf\{\Vert Q-q\Vert_p\,:\,  Q,q \in \mathcal{T}_n,\quad q(x)\le f(x)\le Q(x)\},
$$
respectively.

The integral  (classical) modulus of smoothness of a function $f\in L_p(\T)$, $1\le p\le\infty$,
of order $r\in \N$ and step $\d>0$ is defined by
\begin{equation*}
    \w_r(f,\d)_p=\sup_{0<h<\d} \Vert \D_h^r f\Vert_p,
\end{equation*}
where
$$
\D_h^r f(x)=\sum_{\nu=0}^r\binom{r}{\nu}(-1)^{\nu} f(x+(r-\nu)h),
$$
$\binom{r}{\nu}=\frac{r (r-1)\dots (r-\nu+1)}{\nu!},\quad \binom{r}{0}=1$. In the case $r=1$, we will write $\w(f,\d)_p=\w_1(f,\d)_p$.
The averaged modulus of smoothness ($\tau$-modulus) of a function $f\in L_p(\T)$, $0< p\le\infty$,
of order $r\in \N$ and step $\d>0$ is defined by
$$
\tau_r(f,\d)_p=\|\w_r(f,\cdot,\d)\|_p=\(\int_\T (\w_r(f,x,\d))^pdx\)^{1/p},
$$
where
$$
\w_r(f,x,\d)=\sup\left\{|\D_\d^r f(t)|\,:\,t,t+rh\in [x-r\d/2,x+r\d/2]\right\}
$$
is the local modulus of smoothness of $f$.  In the case $r=1$, we will write $\tau(f,\d)_p=\tau_1(f,\d)_p$.

Consider the trigonometric Lagrange interpolation polynomials:
\begin{equation}\label{Lag}
  L_n(f)(x)=\frac1{2n+1}\sum_{k=0}^{2n} f(t_k)D_n(x-t_k),\quad t_k=\frac{k}{2n+1},
\end{equation}
where
$$
D_n(x)=\sum_{\ell=-n}^n e^{2\pi i\ell x}
$$
is the classical Dirichlet kernel. It is well known that
\begin{equation}\label{i1}
  \|f-L_n(f)\|_\infty\le C\log(n+1)E_n(f)_\infty,\quad  f\in C(\T),\quad n\in \N,
\end{equation}
where $C$ is some absolute constant. At the same time, there exists a continuous function $f_0$ such that
$$
\underset{n\to \infty}{\lim\sup}\,|L_n(f_0)(x)|=\infty
$$
for almost all $x\in \T$ (see, e.g.,~\cite[Theorem~8.14, Ch.~X]{Z}). See also~\cite{V82} for a generalization of this result to arbitrary families of knots of interpolation.

In the case of approximation in the spaces $L_p(\T)$ with $1<p<\infty$, we have a different picture. It is known (see, e.g.,~\cite[Theorems 7.14 and 7.1, Ch.~X]{Z}, see also~\cite{L40}) that  if a function $f$ is Riemann integrable and $1<p<\infty$, then
\begin{equation*}
  \|f-L_n(f)\|_p\to 0\quad\text{as}\quad n\to \infty.
\end{equation*}
Moreover, it follows from~\cite[Theorems 7.14, Ch.~X]{Z} that, for any $f\in C(\T)$ and $1<p<\infty$,
\begin{equation}\label{i2}
  \|f-L_n(f)\|_p\le C(p)E_n(f)_\infty,\quad n\in \N,
\end{equation}
cf.~\eqref{i1}. There are many papers dedicated to improvements and generalizations of estimate~\eqref{i2}, see, e.g., \cite{CZ93_2}, \cite{CZ93}, \cite{CZ99}, \cite{H83}, \cite{H}, \cite{KLP}, \cite{O86}, \cite{Pr84}, \cite{PrXu}.
The most general result is given in terms of the error of best one-sided approximation and averaged moduli of smoothness. In particular, it was proved in~\cite{H}, see also~\cite{PrXu}, that if $f\in B(\T)$, $1<p<\infty$, and $s\in \N$, then
\begin{equation}\label{i5}
  \|f-L_n(f)\|_p\le C(p)\ww E_n(f)_p\le C(p,s)\tau_s(f,1/n)_p,\quad n\in \N.
\end{equation}
For more general sampling oparators, analogues of~\eqref{i5} were established e.g. in~\cite[Theorem~2.9]{SP}, see also~\cite{CZ93_2}, \cite{CZ93}, \cite{CZ99}, \cite{KP21}.

It follows from~\eqref{i5} and properties of the moduli of smoothness and the errors of best approximation (see, e.g., Lemmas~\ref{leJB} and~\ref{leJBt} and $(e_\tau)$, $(f_\tau)$ below)  that,
for all $f\in C(\T)$, $1<p<\infty$, and $\a\in (1/p,s)$,
the following properties are equivalent:
    \begin{enumerate}
    \item[$(i_1)$]  $\|f-L_n(f)\|_p=\mathcal{O}(n^{-\a})$,
    \item[$(i_2)$]  $\ww E_n(f)_p=\mathcal{O}(n^{-\a})$,
    \item[$(i_3)$] $\tau_s(f,\d)_p=\mathcal{O}(\d^{-\a})$,
    \item[$(i_4)$] $\w_s(f,\d)_p=\mathcal{O}(\d^{-\a})$,
    \item[$(i_5)$]  $E_n(f)_p=\mathcal{O}(n^{-\a})$.
  \end{enumerate}
Some of these properties are also equivalent for $\a>0$ and wider classes of functions $f$. In particular, if $f\in B(\T)$ and $\a\in (0,s)$, then we have the following implications:
$$
(i_2) \Leftrightarrow (i_3) \Rightarrow (i_1) \Rightarrow (i_5) \Leftrightarrow (i_4).
$$

Although the inequalities in~\eqref{i5} provide the implications $(i_k) \Rightarrow (i_1)$, $k=2,3$, for all $\a>0$, 
these inequalities are not sharp in general and cannot be applied for some classes of functions. 
First, for each $k\ge 2$, equivalences $(i_1) \Leftrightarrow (i_k)$ are not valid if $\a\le 1/p$, see, e.g., examples in Section~5 below.  Second, the error of best one-sided approximation and the averaged moduli of smoothness make sense only for bounded functions on $\T$ while the interpolation polynomials $L_n(f)$ are well defined and may converge for  finite-valued (non necessarily bounded) functions, see Example~\ref{ex2}. Third, inequality~\eqref{i5} is not sharp even for simplest discontinuous functions. Indeed, if $f_0$ is a 1-periodic function such that $f_0(x)=1$ at $x=1/2$ and $f_0(x)=0$ for $x\in \T\setminus \{1/2\}$, then $0<\ww E_n(f_0)_p\asymp \tau_s(f_0,1/n)_p\asymp n^{-1/p}$ while $\|f_0-L_n(f_0)\|_p=0$ for each $n\in\N$. Of course, after the correction of $f$ on a set of measure zero, we do not have such a problem. Nevertheless, it is not difficult to find a continuous function $f$ such that
$$
\underset{n\to \infty}{\lim\sup}\frac{\ww E_n(f)_p}{\|f-L_n(f)\|_p}=\infty
$$
(see, e.g., Example~\ref{pr5} below).

It occurs the following natural question: what is the most suitable measure of smoothness for estimating the error of approximation  by the Lagrange interpolation polynomials in $L_p(\T)$ with $1\le p<\infty$? In particular, what can one use in~\eqref{i5} instead of~$\tau_s(f,1/n)_p$ and~$\ww E_n(f)_p$ to improve the above mentioned drawbacks? Note that in general $\ww E_n(f)_p$ cannot be replaced  by either $E_n(f)_p$ or $\w_s(f,1/n)_p$. For example, in~\cite{WS03} it was proved that there exists a 1-periodic infinitely differentiable function $f$ such that
\begin{equation}\label{i6}
  \underset{n\to \infty}{\lim\sup}\,\frac{\|f-L_n(f)\|_p}{\w(f,1/n)}=\infty,\quad 1<p<\infty.
\end{equation}
See also~\cite{BXZ92} for a more general result with the error of best approximation $E_n(f)_p$.
In spite of~\eqref{i6}, we show that it is possible to modify the integral modulus of smoothness  such that its modification is able to provide sharp estimates of $\|f-L_n(f)\|_p$. To introduce such a modification, we employ the well-known Steklov averages
$$
f_\d(x)=\frac1{\d}\int_{-\d/2}^{\d/2}f(x+t)dt,\quad \d>0.
$$
Recall (see~\cite{T80}, see also~\cite[8.2.5 and 8.2.8]{TB} and~\cite{DI93}) that, for any $f\in L_p(\T)$,  $1\le p<\infty$, we have
\begin{equation*}
  \|f_\d-f\|_p\asymp \w_2(f,\d)_p,\quad \d>0,
\end{equation*}
where $\asymp$ is a two-sided inequality with positive constants depending only on $p$. Thus, the quantity $\|f_\d-f\|_p$ has almost the same behaviour as the integral modulus of smoothness $\w_2(f,\d)_p$ but, in some particular problems, $\|f_\d-f\|_p$ is much more convenient  to handle than $\w_2(f,\d)_p$. Especially this concerns the problems, where there is a need to use methods of Fourier multipliers. For example, quantities similar to  $\|f_\d-f\|_p$ are very useful in establishing sharp error estimates of approximation by some classical and special approximation processes, see, e.g.,~\cite{K12}, \cite{KT12}, \cite{T80}, \cite{T13}, see also~\cite[Ch.~8]{TB}.

In this work, we modify $\|f_\d-f\|_p$ by replacing the integral norm $\|\cdot\|_p$ by its discrete analogue $\|\cdot\|_{\ell_p(X_n)}$, where $X_n=(x_{k,n})_{k=1}^n$ is a set of distinct points in $\T$, and consider the special "modulus of smoothness" 
\begin{equation*}
  \Omega(f,X_n)_p=\|f_\d-f\|_{\ell_p(X_n)}+\w_2(f,\d)_p.
\end{equation*}
Unlike to the conventional measures of smoothness, the modulus $\Omega(f,X_n)_p$ simultaneously contains information on the smoothness of $f$ in $L_p$ and discrete information on the behaviour of the function $f$ at the points $X_n$. This feature makes $\Omega(f,X_n)_p$ a right quantity for estimating the $L_p$-error of approximation of $f$ by the Lagrange interpolation polynomials and other general sampling processes. In particular, using $\Omega(f,X_n)_p$, we  obtain matching direct and inverse inequalities in the case of approximation of $f$ by the interpolation polynomials $L_n(f)$. Thus, in Corollary~\ref{cor2} below, we show that, for all $1<p<\infty$ and $\a \in (0,2)$, the following equivalence holds:
$$
\|f-L_n(f)\|_p=\mathcal{O}(n^{-\a})\quad \Longleftrightarrow\quad \Omega(f,X_n)_p=\mathcal{O}(n^{-\a}).
$$
As far as we know, results of this type for interpolation and sampling operators are new in the case $0<\a\le 1/p$.  In terms of $\Omega(f,X_n)_p$, we also find the exact order of decay of the corresponding $L_p$-errors for particular classes of functions
 as well as introduce a $K$-functional and its realization, which are equivalent to $\Omega(f,X_n)_p$.
In this paper, we do not restrict ourselves to the case of the trigonometric Lagrange polynomials and the modulus of smoothness of second order 
 rather we consider general linear sampling operators satisfying certain natural properties and measure the $L_p$-error of approximation by means of moduli of smoothness and Steklov averages of arbitrary integer order.

The paper is organized as follows. In Section~2 we define the main objects of the paper: general sampling operators and generalized Steklov averages. We also give  several important examples of sampling operators and discuss basic properties of Steklov averages.  Section~3 is devoted to auxiliary results.
In Section~4 we prove the main results: in Subsection~4.1, we establish the direct and inverse inequalities as well as find conditions ensuring the exact order of approximation by sampling operators in $L_p$; in Subsection~4.2  we consider $K$-functionals and their realizations as well as discuss smoothness properties of sampling operators; in Subsection~4.3 we prove several additional results. In Section~5 we give some important examples, which in particular demonstrate applicability of our results to different classes of functions as well as show some advantages of the new measures of smoothness over the averaged moduli of smoothness and the errors of best one-sided approximation for studying $L_p$-approximation by sampling operators.


\section{Main definitions and preliminary results}

Consider the general linear sampling operator $G_n$ defined  by
\begin{equation*}
  G_n(f)(x)=\sum_{k=1}^n f(x_{k,n})\vp_{k,n}(x),
\end{equation*}
where $\vp_{k,n}$ are appropriate 1-periodic functions (e.g., trigonometric polynomials, splines, etc.) and the set of points $X_n=(x_{k,n})_{k=1}^{n}$ satisfies $0\le x_{1,n}<x_{2,n}<\dots<x_{n,n}<1$.
We also suppose that there exists $\g\in (0,1]$ such that
\begin{equation}\label{g}
  \min_{k}(x_{k+1,n}-x_{k,n})\ge \frac{\g}{n}\quad\text{for all}\quad n\in \N,
\end{equation}
where $x_{n+1,n}=1+x_{1,n}$.

To formulate the main results, we need the following general conditions on the operators $G_n$, $n\in \N$, and  fixed parameters $1\le p<\infty$ and $s\in \N$:

\begin{equation}\label{c1}
  \|G_n(f)\|_p\le K_1\|f\|_{\ell_p(X_n)},\quad f\in L_p(\T), \quad n\in\N,
\end{equation}

\begin{equation}\label{c1'}
  K_2\|f\|_{\ell_p(X_n)}\le \|G_n(f)\|_p,\quad f\in L_p(\T), \quad n\in\N,
\end{equation}

\begin{equation}\label{c2'}
  \|f-G_n(f)\|_p\le K_3n^{-s}\|f^{(s)}\|_p, \quad f\in W_p^s(\T), \quad n\in \N,
\end{equation}
where $K_i=K_i(p)>0$, $i=1,2$, and $K_3=K_3(s,p)>0$.



\begin{remark}\label{remI}
It follows from~\cite[Lemma~1]{CZ99} that if $G_{2n+1}(f)$ belongs to $\mathcal{T}_n$ and is an interpolation polynomial with nodes $X_{2n+1}$, then ~\eqref{g} is a necessary condition for the validity of~\eqref{c1} and~\eqref{c1'} with $n$ odd. Note also that in this case~\eqref{c1} and~\eqref{c1'} imply~\eqref{c2'} for every $s\in \N$, cf. Example~\ref{exG2} below.
\end{remark}

Let us give several important examples of sampling operators $G_n$ satisfying conditions~\eqref{c1}--\eqref{c2'}.

\begin{example}\label{exG2}
\normalfont 
Let  $\mathcal{Z}_{2n+1}=(z_{k})_{k=0}^{2n}$ be such that
$0\le z_{0}<z_{1}<\dots<z_{2n}<1$.
Consider the interpolation polynomial $L_n^\mathcal{Z}(f)$ such that
$L_n^\mathcal{Z}(f)\in \mathcal{T}_n$ and $L_n^\mathcal{Z}(f)(z_{k})=f(z_{k})$ for each $k=0,\dots,2n$, i.e.,
\begin{equation}\label{LZ}
  L_n^\mathcal{Z}(f)(x)=\sum_{k=0}^{2n}f(z_{k})\frac{\prod\limits_{j\neq k}\sin\frac{x-z_{j}}{2}}{\prod\limits_{j\neq k}\sin\frac{z_{k}-z_{j}}{2}}.
\end{equation}
If follows from~\cite[Theorem~1 and Corollary~1]{CZ99} (see also~\cite{CZ93} and~\cite{MS09}) that if $1<p<\infty$ and
\begin{equation*}
  \max_{k=0,\dots,2n}\Big|z_{k}-\frac{k}{2n+1}\Big|\le \frac{\delta_p}{2n+1}
\end{equation*}
for a particular constant $\d_p>0$, then
\begin{equation}\label{tauC}
  \|f-L_n^\mathcal{Z}(f)\|_p\le \frac{C}n\|f'\|_p,\quad f\in W_p^1(\T),
\end{equation}
and
\begin{equation}\label{le1.1C}
    C_2 \|T_n\|_{\ell_p(\mathcal{Z}_{2n+1})}\le \|T_n\|_p\le C_1 \|T_n\|_{\ell_p(\mathcal{Z}_{2n+1})},\quad T_n\in \mathcal{T}_n,
  \end{equation}
where $C$, $C_1$, and $C_2$ are positive constants depending only on $p$. Using~\eqref{le1.1C},  properties of the error of best one-sided approximation, and simple arguments from~\cite{PQ92} and~\cite{PrXu}, we can easily extend~\eqref{tauC} to the case of $f\in W_p^s(\T)$ for every $s\in \N$. Indeed, let $\ww E_n(f)_p=\|Q_n-q_n\|_p$, where $q_n,Q_n\in \mathcal{T}_n$ and $q_n(x)\le f(x)\le Q_n(x)$ for all $x\in \T$.  Then, by~\eqref{le1.1C} and formulas~\eqref{leJBt.1} and $(e_\tau)$ below, we obtain
\begin{equation*}
  \begin{split}
      \|f-L_n^\mathcal{Z}(f)\|_p&\le \|f-q_n\|_p+\|L_n^\mathcal{Z}(f)-q_n\|_p\\
&\le \|Q_n-q_n\|_p+C_1\|f-q_n\|_{\ell_p(\mathcal{Z}_{2n+1})}\\
&\le \ww E_n(f)_p+C_1\|Q_n-q_n\|_{\ell_p(\mathcal{Z}_{2n+1})}\\
&\le (1+C_1C_2^{-1})\ww E_n(f)_p\le \frac{C_3}{n^s}\|f^{(s)}\|_p,\\
   \end{split}
\end{equation*}
where the constant $C_3$ depends only on $s$ and $p$.
Thus, if $1<p<\infty$, then  $L_n^\mathcal{Z}(f)$ satisfies conditions \eqref{c1} and \eqref{c1'} with respect to the set $\mathcal{Z}_{2n+1}$, and~\eqref{c2'} holds for every $s\in \N$.

An important partial case of~\eqref{LZ} is  the trigonometric Lagrange interpolation polynomial~\eqref{Lag}:
\begin{equation*}
  L_n(f)(x)=\frac1{2n+1}\sum_{k=0}^{2n} f(t_k)D_n(x-t_k),\quad t_k=\frac{k}{2n+1}.
\end{equation*}
\end{example}

\begin{example}\label{exG3}
\normalfont If we replace the Dirichlet kernel $D_n$ in the above formula by other appropriate function, we obtain the so-called quasi-interpolation operators 
\begin{equation*}
  Q_n(f)(x)=\frac1{2n+1}\sum_{k=0}^{2n} f(t_k)\vp_n(x-t_k).
\end{equation*}
It is known that if $\vp_n\in \mathcal{T}_n$ and $\sup_n \|\vp_n\|_1<\infty$, then $Q_n(f)$ satisfies~\eqref{c1} for any $f\in L_p(\T)$, $1\le p<\infty$ (see, e.g.,~\cite[Lemma 17]{KKS20}). Concerning condition~\eqref{c2'}, we have that it holds if, for example, $\vp_n(x)=\sum_{|\ell|\le n}\Phi(\frac{\ell}{2n+1})e^{2\pi{\rm i}\ell x}$, where $\Phi\in C^{s+1}$ in a neighborhood of zero, $\Phi(0)=1$, and $\Phi^{(\nu)}(0)=0$ for all $\nu=1,\dots,s-1$ (see, e.g.,~\cite[Theorem 4.5]{KP21}).
\end{example}



Another operators $G_n$ satisfying~\eqref{c1}--\eqref{c2'} can also be  constructed by means of spline functions.

\begin{example}\label{exG4}
\normalfont For $m\in \Z_+$, $n\in \N$, and $\mathcal{X}_{n}=(x_k)_{k=0}^{n-1}$, where $x_k=\frac kn$, 
we denote by $\mathcal{S}_{m,n}$ the set of all spline functions of
degree $m-1$ with the knots $y_k=x_k+\frac{1+(-1)^m}{4n}$, $k=0,\ldots,n,$ i.e.,
$S\in \mathcal{S}_{m,n}$ if $S\in C^{m-2}[0,1]$ and $S$ is some
algebraic polynomial of degree $m-1$ in each interval
$(y_{k},\,y_{k+1})$, $k=0,\ldots,n-1$.
It is well known that there exists a uniquely determined interpolation operator
$$
I_{m,n}\,:\,L_p(\T)\mapsto \mathcal{S}_{m,n}\quad\text{with}\quad I_{m,n}f(x_k)=f(x_k),\quad k=0,1,\dots,n-1.
$$
It follows from~\cite[Corollary~3 and Lemmas~1 and~2]{PQ92} that, for all $1<p<\infty$, $m\in \Z_+$, and $s\le m+1$, 
\begin{equation*}
  \|f-I_{m,n}(f)\|_p\le \frac{C}{n^s}\|f^{(s)}\|_p,\quad f\in W_p^s(\T),
\end{equation*}
and
\begin{equation*}
    C_2 \|S_n\|_{\ell_p(\mathcal{X}_{n})}\le \|S_n\|_p\le C_1 \|S_n\|_{\ell_p(\mathcal{X}_{n})},\quad S_n\in \mathcal{S}_{m,n},
  \end{equation*}
where $C$, $C_1$, and $C_2$ are positive constants depending only on $p$, $m$, and $s$. Thus, conditions~\eqref{c1}--\eqref{c2'} are fulfilled for the spline interpolation  operator~$I_{m,n}$.
\end{example}

We will formulate our results in terms of the following special Steklov averaged operator defined for an integrable function $f$ and parameters $r\in \N$ and $\d>0$ by
\begin{equation*}
  f_{\d,r}(x)=(-1)^{r+1}\binom{2r}{r}^{-1}\frac2\d\int_{-\d/2}^{\d/2} \sum_{\nu=0}^{r-1}(-1)^\nu \binom{2r}{\nu}f\(x+\frac{r-\nu}{r}t\)dt.
\end{equation*}
Note that in the case $r=1$, $f_{\d,r}$ is the classical Steklov averages, i.e.,
$$
f_{\d,1}(x)=f_\d(x)=\frac1{\d}\int_{-\d/2}^{\d/2} f(x+t)dt
$$
and in the case $r=2$, we have
$$
f_{\d,2}(x)=-\frac1{3\d}\int_{-\d/2}^{\d/2} (f(x+t)-4f(x+t/2))dt.
$$

In what follows, we will constantly use the fact that
\begin{equation}\label{st1}
  \kappa_2(r)\w_{2r}(f,\d)_p\le \|f_{\d,r}-f\|_p\le \kappa_1(r)\w_{2r}(f,\d)_p,
\end{equation}
where $\kappa_1(r)$ and $\kappa_2(r)$ are some positive constants depending only on $r$.
Indeed,  we have
\begin{equation}\label{EQ}
  \begin{split}
     &f_{\d,r}(x)\\
&=(-1)^{r+1}\binom{2r}{r}^{-1}\frac2\d\int_0^{\d/2} \(\sum_{\nu=0}^{2r}(-1)^\nu \binom{2r}{\nu}f\(x+\frac{r-\nu}{r}t\)-(-1)^r\binom{2r}{r}f(x)\)dt\\
&=(-1)^{r+1}\binom{2r}{r}^{-1}\frac2\d\int_0^{\d/2}\D_{t/r}^{2r}f(x-t)dt+f(x).
   \end{split}
\end{equation}
Thus, applying Minkowski's inequality, we get
\begin{equation}\label{EQ+}
\begin{split}
    \binom{2r}{r}\|f_{\d,r}-f\|_p&=\left\|\frac2\d\int_0^{\d/2}\D_{t/r}^{2r}f(\cdot-t)dt\right\|_p\le \frac2\d\int_0^{\d/2}\|\D_{t/r}^{2r}f\|_pdt\\
&\le \w_{2r}(f,\d/(2r))_p \le \w_{2r}(f,\d)_p,
\end{split}
\end{equation}
which implies the upper estimate in~\eqref{st1} with the constant $\kappa_1(r)=\binom{2r}{r}^{-1}$. The lower estimate can be proved using the methods of Fourier multipliers, see, e.g.,~\cite[8.2.5]{TB}, see also~\cite{K17}.

We are interested in the behaviour of  the quantity $\|f_{\d,r}-f\|_{\ell_p(X_n)}$, where
$X_n=(x_{k,n})_{k=1}^{n}$ is such that $0\le x_{1,n}<x_{2,n}<\dots<x_{n,n}<1$ and $x_{n+1,n}=1+x_{1,n}$.
The next result provides an analogue of the upper estimate in~\eqref{st1} for the averaged modulus of smoothness $\tau_{2r}(f,1/n)_p$.

\begin{proposition}\label{cor1+}
Let $f\in L_p(\T)$, $1\le p<\infty$, $r,n\in \N$, and $0<\d\le \min_{k}(x_{k+1,n}-x_{k,n})$.
Then
\begin{equation}\label{EQ0}
    \|f_{\d,r}-f\|_{\ell_p(X_n)}\le \frac{\kappa_1(r)}{(\d n)^{1/p}}\tau_{2r}(f,\d)_p.
\end{equation}
\end{proposition}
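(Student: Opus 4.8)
The plan is to first establish a pointwise estimate bounding $|f_{\d,r}(x)-f(x)|$ by the local modulus of smoothness $\w_{2r}(f,y,\d)$ at points $y$ lying within $\d/2$ of $x$, and then to convert the $\ell_p(X_n)$-sum into an integral over $\T$ by averaging over short intervals around the nodes, whose pairwise disjointness is furnished by the hypothesis $\d\le\min_k(x_{k+1,n}-x_{k,n})$. We may clearly assume $\tau_{2r}(f,\d)_p<\infty$, since otherwise \eqref{EQ0} is trivial.

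First I would start from the representation \eqref{EQ}, which yields, for every $x\in\T$,
\[
|f_{\d,r}(x)-f(x)|\le\binom{2r}{r}^{-1}\frac2\d\int_0^{\d/2}\bigl|\D_{t/r}^{2r}f(x-t)\bigr|\,dt.
\]
The crucial observation is that $\D_{t/r}^{2r}f(x-t)$ is a linear combination of the values $f(x+t-\nu t/r)$, $\nu=0,\dots,2r$, whose arguments all lie between $x-t$ and $x+t$. Hence, for any $y\in\T$ with $|x-y|\le\d/2$ and any $0<t\le\d/2$, both endpoints $x-t$ and $x+t$ belong to $[y-r\d,y+r\d]$ (because $|x-y|+t\le\d\le r\d$), so that $|\D_{t/r}^{2r}f(x-t)|\le\w_{2r}(f,y,\d)$ directly from the definition of the local modulus. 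Substituting this and using $\frac2\d\int_0^{\d/2}dt=1$ would give the pointwise bound
\[
|f_{\d,r}(x)-f(x)|\le\binom{2r}{r}^{-1}\w_{2r}(f,y,\d)=\kappa_1(r)\,\w_{2r}(f,y,\d)\qquad\text{whenever }|x-y|\le\tfrac\d2,
\]
where $\kappa_1(r)=\binom{2r}{r}^{-1}$ as in \eqref{st1}.

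Next I would discretize. For $k=1,\dots,n$ put $J_k=[x_{k,n}-\d/2,\,x_{k,n}+\d/2]$, an interval of length $\d$ centred at the node $x_{k,n}$. Because consecutive nodes are at distance at least $\d$ (and $x_{n+1,n}=1+x_{1,n}$), the $J_k$ have pairwise disjoint interiors and, reduced modulo $1$, are contained in one period of $\T$. Applying the pointwise bound with $x=x_{k,n}$, raising it to the $p$-th power, and integrating in $y$ over $J_k$ gives
\[
\d\,|f_{\d,r}(x_{k,n})-f(x_{k,n})|^p\le\kappa_1(r)^p\int_{J_k}\w_{2r}(f,y,\d)^p\,dy.
\]
Summing over $k$, invoking the disjointness of the $J_k$ and the $1$-periodicity of $\w_{2r}(f,\cdot,\d)$, and dividing by $\d n$, I would arrive at
\[
\|f_{\d,r}-f\|_{\ell_p(X_n)}^p=\frac1n\sum_{k=1}^n|f_{\d,r}(x_{k,n})-f(x_{k,n})|^p\le\frac{\kappa_1(r)^p}{\d n}\int_\T\w_{2r}(f,y,\d)^p\,dy=\frac{\kappa_1(r)^p}{\d n}\,\tau_{2r}(f,\d)_p^p,
\]
and taking the $p$-th root is precisely \eqref{EQ0}.

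The argument is essentially routine. The only points that require some care are the verification that every sample point used by $\D_{t/r}^{2r}f(x-t)$ indeed lies inside the interval $[y-r\d,y+r\d]$ defining $\w_{2r}(f,y,\d)$ for all admissible $t$ and all $y$ within $\d/2$ of $x$, and the bookkeeping with the wrap-around node $x_{n+1,n}=1+x_{1,n}$ that makes the intervals $J_k$ disjoint modulo $1$. I do not anticipate a genuine obstacle.
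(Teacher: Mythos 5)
Your proposal is correct and follows essentially the same route as the paper: the pointwise bound from the representation \eqref{EQ} controlling $|f_{\d,r}(x_k)-f(x_k)|$ by the local modulus $\w_{2r}(f,y,\d)$ for $y$ near $x_k$, followed by integration over the pairwise disjoint intervals of length $\d$ centred at the nodes. The only cosmetic difference is that you pass directly to $\w_{2r}(f,y,\d)$, whereas the paper goes through $\w_{2r}(f,x_k,\d/(2r))\le\w_{2r}(f,\t,\d/r)$ and then uses monotonicity of $\tau_{2r}$ in the step; both are valid.
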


\begin{proof}
Denote $x_k=x_{k,n}$.  It follows from~\eqref{EQ} that
\begin{equation}\label{EQ1}
\begin{split}
    |f_{\d,r}(x_k)-f(x_k)|&\le \frac{2\kappa_1(r)}\d\int_0^{\d/2}|\D_{t/r}^{2r}f(x_k-t)|dt\\
&=\frac{2\kappa_1(r) }\d\int_{x_k-\d/2}^{x_k}|\D_{\frac{x_k-t}r}^{2r}f(t)|dt.
\end{split}
\end{equation}
Let $t\in [x_k-\d/2, x_k]$ and $\t\in [x_k-\d/2,x_k+\d/2]$. Then, taking into account that $[x_k-\d/2, x_k+\d/2]\subset [\t-\d, \t+\d]$, we get
\begin{equation*}
  \begin{split}
      \big|\D_{\frac{x_k-t}r}^{2r}f(t)\big|\le \w_{2r}(f,x_k,\d/(2r))\le \w_{2r}(f,\t,\d/r)
  \end{split}
\end{equation*}
and, therefore,  by~\eqref{EQ1} and $(a_\tau)$ below, we obtain
\begin{equation*}
  \begin{split}
      \frac1n \sum_{k=1}^n|f_{\d,r}(x_k)-f(x_k)|^p&\le \frac{\kappa_1(r)^p}{\d n} \sum_{k=1}^n\int_{x_k-\d/2}^{x_k+\d/2}\(\w_{2r}(f,\t,\d/r)\)^p\,d\t\\
&\le \frac{\kappa_1(r)^p}{\d n} \tau_{2r}(f,\d)_p^p,
  \end{split}
\end{equation*}
which implies~\eqref{EQ0}.
\end{proof}



\section{Auxiliary results}


\subsection{Integral moduli of smoothness and the error of best approximation}
Let us recall several basic properties of the integral moduli of smoothness (see, e.g.,~\cite[Ch.~2]{DL}, \cite[Ch.~4]{TB}).
For  $f,g\in L_p(\T)$, $1\le p<\infty$, $\d>0$, and $r\in \N$, we have 
\begin{itemize}
\item[$(a_\w)$]
$ \w_r(f,\d)_p$ is a non-negative non-decreasing function of $\d$ such that
$\lim\limits_{\d\to 0+} \w_r(f,\delta)_p=0;$ 
\item[$(b_\w)$]
$\w_r(f+g,\d)_p\le \w_r(f,\d)_p+\w_r(g,\d)_p;$

 \item[$(c_\w)$]  $ \w_{r+1}(f,\d)_p\le 2\w_r(f,\d)_p$;

\item[$(d_\w)$]
for $\l>0$,
        $$\w_r(f,\lambda \delta)_p\le (1+\l)^r  \w_r(f,\d)_p;$$

\item[$(e_\w)$] $\w_r(f,\d)_p\le \d^r \|f^{(r)}\|_p$ for all $f\in W_p^r(\T)$.

%

\end{itemize}

In the next two lemmas, we recall the classical direct and inverse approximation theorems and one result on the exact order of the error of best approximation.

\begin{lemma}\label{leJB} {\normalfont (See, e.g.,~\cite[Ch.~7]{DL}.)}
  Let $f\in L_p(\T)$, $1\le p<\infty$, and $r,n\in \N$. Then
\begin{equation}\label{leJB.1}
  E_{n-1}(f)_p\le c_1(r)\w_r(f,1/n)_p
\end{equation}
and
\begin{equation}\label{leJB.2}
  \w_r(f,1/n)_p\le \frac{c_2(r)}{n^r}\sum_{\nu=0}^n(\nu+1)^{r-1}E_\nu(f)_p.
\end{equation}
\end{lemma}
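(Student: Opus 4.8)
The plan is to recall, rather than reinvent, the two standard arguments, since both inequalities are entirely classical; I will only indicate the mechanisms. For the direct inequality \eqref{leJB.1}, the idea is to construct a good approximant explicitly from a Jackson-type kernel. First I would take the classical Jackson kernel $J_n$ of order $r$ — a non-negative trigonometric polynomial of degree $\le n-1$ with $\int_\T J_n=1$ and with $r$-th order moment bound $\int_\T |t|^r J_n(t)\,dt\le c(r)n^{-r}$ — and form the convolution operator $f\mapsto f*J_n$ modified so as to reproduce trigonometric polynomials of the appropriate degree; a clean device is the operator $\mathcal{J}_nf=\sum_{\nu=0}^{r}\binom{r}{\nu}(-1)^{\nu+1}(f(\cdot+\text{shift})*J_n)$ built from the identity $-\sum_{\nu=1}^{r}\binom{r}{\nu}(-1)^{\nu}=1$. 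Then $f-\mathcal{J}_nf$ can be written as an average of $r$-th differences $\Delta_t^r f$ against the kernel, and Minkowski's inequality together with the moment bound yields $\|f-\mathcal{J}_nf\|_p\le c_1(r)\,\w_r(f,1/n)_p$. Since $\mathcal{J}_nf\in\mathcal{T}_{n-1}$, this gives \eqref{leJB.1}.

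For the inverse inequality \eqref{leJB.2}, the mechanism is the Bernstein inequality plus a dyadic decomposition. Let $T_\nu\in\mathcal{T}_\nu$ be a best approximant with $\|f-T_\nu\|_p=E_\nu(f)_p$. Writing $f=T_1+\sum_{j\ge 0}(T_{2^{j+1}}-T_{2^j})$ (telescoping, with $T_{2^{j+1}}-T_{2^j}\in\mathcal{T}_{2^{j+1}}$ and $\|T_{2^{j+1}}-T_{2^j}\|_p\le 2E_{2^j}(f)_p$), I would apply property $(e_\w)$ and the Bernstein inequality $\|U_m^{(r)}\|_p\le (cm)^r\|U_m\|_p$ for $U_m\in\mathcal{T}_m$ to each block: $\w_r(g,1/n)_p\le n^{-r}\|g^{(r)}\|_p$ when $\deg g$ is not too large, and $\w_r(g,1/n)_p\le 2^r\|g\|_p$ otherwise (via $(b_\w)$ and the trivial bound $\w_r(g,\delta)_p\le 2^r\|g\|_p$). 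Splitting the series at the index where $2^j\sim n$ and summing the two resulting geometric-type sums (one in $n^{-r}(2^j)^r$, one in $(2^j)^0$ against $E_{2^j}(f)_p$) produces a bound of the form $c\,n^{-r}\sum_{\nu=0}^n(\nu+1)^{r-1}E_\nu(f)_p$ after converting the dyadic sum back to a full sum using monotonicity of $E_\nu(f)_p$ in $\nu$. This is exactly \eqref{leJB.2}.

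The only mildly delicate point is bookkeeping: making sure the polynomial degrees line up (so that $\mathcal{J}_nf$ really has degree $\le n-1$ in the direct part, and that the telescoping blocks land in the right $\mathcal{T}_m$), and handling the borderline dyadic block near $2^j\approx n$ in the inverse part so that the constant depends only on $r$. Since both statements are textbook results — the excerpt itself cites \cite[Ch.~7]{DL} — I would simply reference that source and omit the detailed computation, noting that the argument above is the standard one. If a self-contained proof were desired, the two displays to verify are the Minkowski-plus-moment estimate for $f-\mathcal{J}_nf$ and the geometric summation in the dyadic decomposition; neither presents a genuine obstacle.
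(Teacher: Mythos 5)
Your proposal is correct: both sketches (the Jackson-kernel construction for \eqref{leJB.1} and the dyadic telescoping with Bernstein's inequality for \eqref{leJB.2}) are the standard arguments, and the paper itself offers no proof at all, simply citing \cite[Ch.~7]{DL} exactly as you propose to do. Nothing further is needed.
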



\begin{lemma}\label{lemR} {\normalfont (See~\cite{R94}.)}
  Let $f\in L_p(\T)$, $1\le p<\infty$, and $r\in \N$. There exists a positive constant $G$ such that
$$
\w_r(f,1/n)_p\le GE_n(f)_p,\quad n\in \N,
$$
if and only if there exists a positive constant $F$ such that
$$
\w_r(f,\d)_p\le F\w_{r+1}(f,\d)_p,\quad \d>0.
$$
\end{lemma}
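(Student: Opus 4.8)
The plan is to prove the two implications separately; the forward one is a quick consequence of the Jackson theorem, while the converse is the substantial part and amounts to extracting a regularity property of the best-approximation errors from the hypothesis.

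For the forward implication, suppose $\w_r(f,1/n)_p\le G\,E_n(f)_p$ for all $n\in\N$. I would apply the Jackson inequality~\eqref{leJB.1} with $r$ replaced by $r+1$, namely $E_{n-1}(f)_p\le c_1(r+1)\,\w_{r+1}(f,1/n)_p$, and combine it with the monotonicity $E_n(f)_p\le E_{n-1}(f)_p$ of the best-approximation numbers to obtain $\w_r(f,1/n)_p\le G\,c_1(r+1)\,\w_{r+1}(f,1/n)_p$ for every $n$. For an arbitrary $\d\in(0,1]$ one picks $n$ with $1/(n+1)<\d\le 1/n$ and passes from step $1/n$ to step $\d$ using $(a_\w)$ and $(d_\w)$ (note $1/n\le 2\d$); the case $\d>1$ reduces trivially to $\d=1$ since $\w_k(f,\d)_p=\w_k(f,1)_p$ there. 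This gives $\w_r(f,\d)_p\le F\,\w_{r+1}(f,\d)_p$ for all $\d>0$ with a suitable $F$.

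For the converse, suppose $\w_r(f,\d)_p\le F\,\w_{r+1}(f,\d)_p$ for all $\d>0$; since $\w_{r+1}(f,\d)_p\le 2\w_r(f,\d)_p$ always holds by $(c_\w)$, this is equivalent to $\w_r(f,\d)_p\asymp\w_{r+1}(f,\d)_p$. The goal is the reverse Jackson inequality $\w_r(f,1/n)_p\le G\,E_n(f)_p$, and by the inverse estimate~\eqref{leJB.2} it suffices to prove the regularity bound $\sum_{\nu=0}^{n}(\nu+1)^{r-1}E_\nu(f)_p\le C\,n^r E_n(f)_p$ for all $n$. I would first translate the hypothesis into best-approximation language using the sharp forms of the Jackson and inverse theorems of orders $r$ and $r+1$: up to the lower-order $\|f\|_p$-terms, $\w_r\asymp\w_{r+1}$ becomes $n\sum_{\nu=0}^{n}(\nu+1)^{r-1}E_\nu(f)_p\le C\sum_{\nu=0}^{n}(\nu+1)^{r}E_\nu(f)_p$, and, setting $B_n=\sum_{\nu=0}^{n}(\nu+1)^{r-1}E_\nu(f)_p$ and summing by parts, this reads $\sum_{\nu=0}^{n-1}B_\nu\le(1-\theta)\,nB_n$ for a fixed $\theta\in(0,1)$. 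From here I would iterate this inequality over dyadic blocks, exploiting that $(\nu+1)^{r-1}E_\nu(f)_p$ grows at most polynomially (because $E_\nu(f)_p$ is nonincreasing), to conclude that $B_n$ grows at least like a fixed positive power of $n$ and is dominated by its last block, i.e. $B_n\le C\,n^r E_n(f)_p$; plugging this back into~\eqref{leJB.2} gives the desired inequality.

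The step I expect to be the main obstacle is precisely this last one — upgrading the averaged inequality $\sum_{\nu<n}B_\nu\le(1-\theta)nB_n$ to the bound $B_n\le C\,n^r E_n(f)_p$ valid for each $n$. The constant involved is not small (it absorbs $F$ and the Jackson/inverse constants), so a crude dyadic iteration need not close, and one has to genuinely use the monotonicity of $E_\nu(f)_p$ to exclude fast oscillation of the partial sums $B_n$. A possibly cleaner alternative is to run a Gronwall-type argument directly on the Marchaud inequality $\w_r(f,\d)_p\le C_r\,\d^r\big(\|f\|_p+\int_\d^1 t^{-r-1}\w_{r+1}(f,t)_p\,dt\big)$, deriving power-type regularity $\w_{r+1}(f,\d)_p\le A\,\d^\a$ for some $\a>0$, after which reversing Jackson is classical; yet another is to first upgrade $\w_r\asymp\w_{r+1}$ to $\w_r\asymp\w_{r+k}$ for every $k\in\N$ and invoke the inverse inequality of a sufficiently large order $k$, for which the summed term $n^{-k}\sum_{\nu\le n}(\nu+1)^{k-1}E_\nu(f)_p$ is automatically dominated by its tail, which is $\asymp E_n(f)_p$.
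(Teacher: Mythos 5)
Your easy direction is correct and is exactly what the paper does for the analogous Lemma~\ref{lemR+}: from $\w_r(f,1/n)_p\le G\,E_n(f)_p$, Jackson's inequality~\eqref{leJB.1} of order $r+1$ and the monotonicity of $E_n(f)_p$ give $\w_r(f,1/n)_p\le G\,c_1(r+1)\,\w_{r+1}(f,1/n)_p$, and $(a_\w)$, $(d_\w)$ extend this to all $\d>0$. Note that the paper gives no proof of Lemma~\ref{lemR} itself (it cites~\cite{R94}), but it proves the $\tau$-analogue, Lemma~\ref{lemR+}, in full, and that argument is the relevant template.

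The substantial direction of your proposal has genuine gaps, partly acknowledged by you. First, the ``translation'' step is invalid: the Jackson and inverse theorems only sandwich $\w_r(f,1/n)_p$ between $c\,E_n(f)_p$ and $C n^{-r}\sum_{\nu\le n}(\nu+1)^{r-1}E_\nu(f)_p$; since the inverse-theorem expression is merely an upper bound for the modulus (and may overestimate it), the hypothesis $\w_r\le F\w_{r+1}$ does not transform into the weighted-sum comparison $n\sum_{\nu\le n}(\nu+1)^{r-1}E_\nu(f)_p\le C\sum_{\nu\le n}(\nu+1)^{r}E_\nu(f)_p$. Second, the dyadic iteration from $\sum_{j<n}B_j\le(1-\theta)nB_n$ does not close: the natural self-improvement only yields $B_n\lesssim n^{r}E_{[n/K]}(f)_p$ for a large fixed $K$, and finishing requires precisely the regularity $E_{[n/K]}(f)_p\lesssim E_n(f)_p$ that one is trying to establish. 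Third, your first fallback rests on a false premise: the hypothesis does not imply power-type decay $\w_{r+1}(f,\d)_p\le A\d^{\a}$ (functions with $\w_r(f,\d)_p\asymp\w_{r+1}(f,\d)_p\asymp(\log(1/\d))^{-1}$ satisfy both the hypothesis and the conclusion), and your second fallback presupposes $\w_{r+1}\lesssim\w_{r+2}$, etc., which is not available at that stage. The route that works --- Rathore's, reproduced in the paper's proof of Lemma~\ref{lemR+} --- is: (i) from the hypothesis together with $(c_\w)$ and $(d_\w)$ deduce the dilation bound $\w_{r+1}(f,\l h)_p\le 2F(1+\l)^{r}\w_{r+1}(f,h)_p$, which gains one power of $\l$ over the generic $(1+\l)^{r+1}$; (ii) use Jackson of order $r+1$ and this bound to show $n^{-(r+1)}\sum_{\nu\le n}(\nu+1)^{r}E_\nu(f)_p\le C F\,\w_{r+1}(f,1/n)_p$; (iii) apply the inverse inequality of order $r+1$ at scale $1/(mn)$, split the sum at $n$, and use the monotonicity of $E_\nu(f)_p$ together with the dilation bound to get $E_n(f)_p\ge c(m)\,\w_{r+1}(f,1/n)_p$ once $m$ is fixed large enough; the hypothesis $\w_r\le F\w_{r+1}$ then finishes the proof.
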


We also need the Nikol'skii--Stechkin--Boas-type inequality (see, e.g.,~\cite[p.~214 and p.~251]{timan}), which
states that, for $T_n\in \mathcal{T}_n$, $r\in\N$, and $1\le p<\infty$:
\begin{equation}\label{NS-}
\| T_n^{(r)}  \|_p\le
\(\frac{n}{2\sin\frac{n\delta}{2}}\)^r \| \D_\delta^r  T_n\|_{p},\quad 0<\delta\le\frac\pi n.
\end{equation}
In particular, if $T_n\in \mathcal{T}_n$ is a polynomial of the best approximation to $f$ in $L_p(\T)$, i.e., $\|f-T_n\|_p=E_n(f)_p$, then by direct inequality~\eqref{leJB.1}, $(b_\w)$, and $(c_\w)$, we have 
\begin{equation}\label{NS}
  \| T_n^{(r)}\|_p\le c_3(r)n^r\w_r(f,1/n)_p,\quad n\in \N.
\end{equation}

\subsection{The averaged moduli of smoothness and the error of best one-sided approximation}

The following basic properties of the averaged moduli of smoothness can be found in~\cite[Ch.~1]{SP}.
For  $f,g\in B(\T)$, $1\le p<\infty$, $\d>0$, and $r\in \N$, we have 
\begin{itemize}
\item[$(a_\tau)$]
$ \tau_r(f,\d)_p$ is a non-negative non-decreasing function of $\d$ and $\lim\limits_{\d\to 0+} \tau_r(f,\delta)_p=0$ if $f$ is a  Riemann integrable function on $\T$; 
\item[$(b_\tau)$]
$\tau_r(f+g,\d)_p\le \tau_r(f,\d)_p+\tau_r(g,\d)_p;$

 \item[$(c_\tau)$]  $ \tau_{r+1}(f,\d)_p\le 2\tau_r\(f,\frac{r+1}{r}\d\)_p$;

\item[$(d_\tau)$]
for $\l>0$,
        $$\tau_r(f,\lambda \delta)_p\le (2(1+\l))^{r+1}  \tau_r(f,\d)_p,\quad r\ge 2,$$
        $$\tau_1(f,\lambda \delta)_p\le (1+\l)  \tau_1(f,\d)_p;$$
\item[$(e_\tau)$] $\tau_r(f,\d)_p\le c_4(r)\d^r \|f^{(r)}\|_p$ for each $f\in W_p^r(\T)$.
\end{itemize}

The relations between $\tau_r(f,\d)_p$ and $\w_r(f,\d)_p$ are given in the following two properties:

\begin{itemize}

\item[$(f_\tau)$]   $\w_r(f,\d)_p\le \tau_r(f,\d)_p\le (2\pi)^{1/p} \w_r(f,\d)_\infty$;

\item[$(g_\tau)$] if $f\in C(\T)$, then  $$
                   \tau_r(f,\d)_p\le c(r,p)\d^{1/p}\int_{0}^\d \frac{\w_r(f,\d)_p}{t^{1/p+1}}dt.
                    $$
\end{itemize}

\bigskip

The next lemma is an analogue of Lemma~\ref{leJB} for the averaged moduli of smoothness and the error of best one-sided approximation.

\begin{lemma}\label{leJBt}
  Let $f\in L_p(\T)$, $1\le p<\infty$, and $r,n\in \N$. Then
\begin{equation}\label{leJBt.1}
  \ww E_{n-1}(f)_p\le c_1(r)\tau_r(f,1/n)_p
\end{equation}
and
\begin{equation}\label{leJBt.2}
  \tau_r(f,1/n)_p\le \frac{c_2(r)}{n^r}\sum_{\nu=0}^n(\nu+1)^{r-1}\ww E_\nu(f)_p.
\end{equation}
\end{lemma}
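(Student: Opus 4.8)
The plan is to prove the two inequalities \eqref{leJBt.1} and \eqref{leJBt.2} by mimicking the classical Jackson--Bernstein scheme for the integral modulus (Lemma~\ref{leJB}) but keeping everything in the one-sided setting. For the direct inequality \eqref{leJBt.1}, I would fix $f$ and choose a step $h$ of order $1/n$, and build explicit trigonometric polynomials $q_{n-1}, Q_{n-1}\in\mathcal{T}_{n-1}$ with $q_{n-1}\le f\le Q_{n-1}$ pointwise such that $\|Q_{n-1}-q_{n-1}\|_p$ is controlled by $\tau_r(f,1/n)_p$. The natural device is a positive (nonnegative) Jackson-type kernel $J_{n-1,r}$ of the right degree whose convolution reproduces polynomials up to order $r$, and to sandwich $f$ using the local modulus: one has $f(x+u)-f(x)$ dominated in absolute value (up to the usual finite-difference identity of order $r$) by sums of $\omega_r(f,\cdot,ch)$ evaluated at nearby points, which after convolution against the positive kernel produces an $L_p$-bound of the form $c_1(r)\,\tau_r(f,1/n)_p$. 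This is exactly the argument used in the averaged-modulus literature (e.g.\ \cite[Ch.~1]{SP} and \cite{PrXu}); I would either cite it or reproduce the short kernel computation.

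For the inverse inequality \eqref{leJBt.2}, I would run the telescoping/Bernstein argument. Let $q_\nu, Q_\nu\in\mathcal{T}_\nu$ realize (nearly) $\ww E_\nu(f)_p$, set $P_\nu = \tfrac12(q_{2^\nu}+Q_{2^\nu})$ so that $\|f-P_\nu\|_\infty$-type control is replaced by $\|f-P_\nu\|_p\le \ww E_{2^\nu}(f)_p$ together with $\|Q_{2^\nu}-q_{2^\nu}\|_p = \ww E_{2^\nu}(f)_p$, and estimate $\tau_r(f,1/n)_p$ using $(b_\tau)$ by splitting $f = P_m + \sum_{\nu>m}(P_\nu - P_{\nu-1})$ for $2^m\approx n$. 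On the low-frequency block $P_m$ I use $(e_\tau)$ and the Nikol'skii--Stechkin--Boas inequality \eqref{NS-} (or directly \eqref{NS}) to bound $\tau_r(P_m,1/n)_p\le c\,n^{-r}\|P_m^{(r)}\|_p\le c\,n^{-r}\sum_{\nu\le m}2^{\nu r}\ww E_{2^\nu}(f)_p$; on each high-frequency block I use that $P_\nu-P_{\nu-1}$ is a polynomial of degree $\le 2^\nu$ whose $L_p$-norm is $\lesssim \ww E_{2^{\nu-1}}(f)_p$, together with $(f_\tau)$ or the trivial bound $\tau_r(g,\delta)_p\le 2^r\|g\|_p$ coming from $\omega_r(g,x,\delta)\le 2^r\|g\|_\infty$ in the $p=\infty$ estimate of $(f_\tau)$ — here one must be slightly careful since $(f_\tau)$ costs an $L_\infty$-norm, so instead I would use Bernstein on the block in the form $\tau_r(P_\nu-P_{\nu-1},1/n)_p\le c\,n^{-r}\|(P_\nu-P_{\nu-1})^{(r)}\|_p\le c\,(2^\nu/n)^r\ww E_{2^{\nu-1}}(f)_p$. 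Summing the geometric-type series in $\nu$ and converting the dyadic sum $\sum 2^{\nu(r-1)}\ww E_{2^\nu}(f)_p$ back to $\sum_{\nu=0}^n(\nu+1)^{r-1}\ww E_\nu(f)_p$ via monotonicity of $\ww E_\nu(f)_p$ in $\nu$ gives \eqref{leJBt.2}.

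The one genuinely non-routine point, and the place I would spend the most care, is that the error of \emph{best one-sided} approximation does not behave linearly: $\ww E_n(f+g)_p\ne \ww E_n(f)_p+\ww E_n(g)_p$ in general, and one cannot simply subtract one-sided approximants. The fix is the standard one — work with the \emph{midpoints} $P_\nu=\tfrac12(q_{2^\nu}+Q_{2^\nu})$, which are honest two-sided approximants with $\|f-P_\nu\|_p\le \tfrac12\|Q_{2^\nu}-q_{2^\nu}\|_p=\tfrac12\ww E_{2^\nu}(f)_p$, so that all telescoping differences $P_\nu-P_{\nu-1}$ are ordinary trigonometric polynomials to which \eqref{NS-}/\eqref{NS} and $(e_\tau)$ apply. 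Once this reduction is in place, the estimate for $\tau_r$ on the blocks is identical in structure to the proof of \eqref{leJB.2}; the only residual subtlety is making sure the constants depend only on $r$ (and, where $(g_\tau)$-type bounds are invoked, also on $p$), which I would track explicitly so that the statement matches the form asserted in Lemma~\ref{leJBt}. I expect \eqref{leJBt.2} to be the harder half; \eqref{leJBt.1} follows from the positive-kernel sandwich construction and is essentially bookkeeping.
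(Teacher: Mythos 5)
The paper offers no proof of Lemma~\ref{leJBt}: both inequalities are quoted as known results from the literature on averaged moduli (the direct estimate \eqref{leJBt.1} is the one-sided Jackson theorem of Sendov--Popov/Hristov, and \eqref{leJBt.2} is its companion inverse theorem), so your proposal has to be judged against the standard arguments rather than against anything in the text. Your treatment of \eqref{leJBt.1} is essentially a deferral to those sources, which is defensible, though the phrase ``positive kernel whose convolution reproduces polynomials up to order $r$'' cannot be taken literally (positive convolution operators saturate at order two), and the construction of a one-sided pair $q\le f\le Q$ with $\|Q-q\|_p\lesssim\tau_r(f,1/n)_p$ is genuinely more delicate than a sandwich-and-convolve computation.

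The real gap is in your proof of \eqref{leJBt.2}, at the high-frequency blocks. Having written $f=P_m+\sum_{\nu>m}(P_\nu-P_{\nu-1})$ with $2^m\asymp n$, you bound each block with $\nu>m$ by Bernstein, $\tau_r(P_\nu-P_{\nu-1},1/n)_p\le c\,(2^\nu/n)^r\,\ww E_{2^{\nu-1}}(f)_p$. For $\nu>m$ the factor $(2^\nu/n)^r$ is at least $1$ and grows geometrically, so this series diverges whenever $\ww E_N(f)_p$ decays no faster than $N^{-r}$, and in any case it involves $\ww E_\nu(f)_p$ for $\nu>n$ with increasing weights, which cannot be absorbed into the finite sum $\sum_{\nu=0}^n(\nu+1)^{r-1}\ww E_\nu(f)_p$. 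The fallback you mention, $\tau_r(g,\d)_p\le 2^r\|g\|_p$, is false: the local supremum inside $\tau_r$ is not controlled by the $L_p$-norm, which is the whole point of the averaged modulus. The correct argument stops the telescoping at $\nu=m$ and treats the single remainder $f-P_m$ by exploiting one-sidedness: since $q_{2^m}\le f\le Q_{2^m}$ pointwise, $|f(t)-P_m(t)|\le\tfrac12\big(Q_{2^m}-q_{2^m}\big)(t)$ for every $t$, hence
\begin{equation*}
\w_r(f-P_m,x,1/n)\le 2^{r-1}\Big((Q_{2^m}-q_{2^m})(x)+\w_1\big(Q_{2^m}-q_{2^m},x,r/n\big)\Big),
\end{equation*}
so that $\tau_r(f-P_m,1/n)_p\le 2^{r-1}\big(\|Q_{2^m}-q_{2^m}\|_p+\tau_1(Q_{2^m}-q_{2^m},r/n)_p\big)\le C(r)\,\ww E_{2^m}(f)_p$, the last step by $(d_\tau)$, $(e_\tau)$ and Bernstein's inequality applied to the nonnegative polynomial $Q_{2^m}-q_{2^m}$ of degree at most $n$. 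This is exactly where the one-sided approximants are indispensable (your midpoint device $P_\nu=\tfrac12(q_{2^\nu}+Q_{2^\nu})$ is the right tool for it); with this replacement your low-frequency estimate goes through unchanged and yields \eqref{leJBt.2}.
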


We also need the following analogue of Lemma~\ref{lemR} for the averaged moduli of smoothness.

\begin{lemma}\label{lemR+}
  Let $f\in L_p(\T)$, $1\le p<\infty$. There exists a constant $G>0$ such that
\begin{equation}\label{lemR+.1}
  \tau(f,1/n)_p\le G\ww E_n(f)_p,\quad n\in \N,
\end{equation}
if and only if there exists a constant $F>0$ such that
\begin{equation}\label{lemR+.2}
\tau(f,\d)_p\le F\tau_{2}(f,\d)_p,\quad \d>0.
\end{equation}
\end{lemma}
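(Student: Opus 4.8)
The plan is to adapt the proof of Lemma~\ref{lemR} in~\cite{R94}, replacing $\w_r$ by $\tau_r$, the error of best approximation $E_n$ by the error of best one-sided approximation $\ww E_n$, Lemma~\ref{leJB} by Lemma~\ref{leJBt}, and the properties $(a_\w)$--$(e_\w)$ by their averaged analogues $(a_\tau)$--$(e_\tau)$. Since $\tau_r(f,\d)_p$ is meaningful only for bounded $f$ (and, for the limit relation in $(a_\tau)$, for Riemann integrable $f$), both statements are understood for $f\in B(\T)$.

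For the implication \eqref{lemR+.1}$\Rightarrow$\eqref{lemR+.2} I would combine \eqref{lemR+.1} with \eqref{leJBt.1} for $r=2$ (and $(a_\tau)$), which give $\ww E_n(f)_p\le c_1(2)\tau_2(f,\tfrac1{n+1})_p\le c_1(2)\tau_2(f,\tfrac1n)_p$ and hence $\tau(f,\tfrac1n)_p\le Gc_1(2)\tau_2(f,\tfrac1n)_p$ for all $n\in\N$; one then passes to an arbitrary $\d\in(0,1]$ by taking $n=\lfloor1/\d\rfloor\ge1$ (so $\tfrac1n<2\d$) and using the monotonicity $(a_\tau)$ and the dilation property $(d_\tau)$,
\[
\tau(f,\d)_p\le\tau\(f,\tfrac1n\)_p\le Gc_1(2)\,\tau_2\(f,\tfrac1n\)_p\le Gc_1(2)\,\tau_2(f,2\d)_p\le 216\,Gc_1(2)\,\tau_2(f,\d)_p ,
\]
while for $\d\ge1$ both sides stabilize since $\tau_r(f,\d)_p=\tau_r(f,1)_p$ on $\T$; this is \eqref{lemR+.2} with $F=216\,Gc_1(2)$.

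The converse implication \eqref{lemR+.2}$\Rightarrow$\eqref{lemR+.1} is the substantial one. First, $(c_\tau)$ with $r=1$ together with $(d_\tau)$ give $\tau_2(f,\d)_p\le2\tau(f,2\d)_p\le6\tau(f,\d)_p$, so \eqref{lemR+.2} is equivalent to $\tau(f,\d)_p\asymp\tau_2(f,\d)_p$. Next, \eqref{leJBt.1} with $r=1$ always yields $\ww E_n(f)_p\le\ww E_{n-1}(f)_p\le c_1(1)\tau(f,\tfrac1n)_p$, so it remains to prove the reverse bound $\tau(f,\tfrac1n)_p\le G\ww E_n(f)_p$; by the inverse inequality \eqref{leJBt.2} with $r=1$, for this it is enough to establish the discrete Hardy-type estimate
\[
\sum_{\nu=0}^{n}\ww E_\nu(f)_p\le C\,n\,\ww E_n(f)_p,\qquad n\in\N.
\]
Hypothesis \eqref{lemR+.2} enters precisely at this point: combining \eqref{leJBt.1} ($r=1$), \eqref{lemR+.2}, $(c_\tau)$ ($r=1$), $(a_\tau)$, and \eqref{leJBt.2} ($r=1$) one obtains, for every $\nu\ge2$,
\[
\ww E_{\nu-1}(f)_p\le c_1(1)\,\tau\(f,\tfrac1\nu\)_p\le c_1(1)F\,\tau_2\(f,\tfrac1\nu\)_p\le 2c_1(1)F\,\tau\(f,\tfrac2\nu\)_p\le\frac{C_1}{\nu}\sum_{\mu=0}^{\lfloor\nu/2\rfloor}\ww E_\mu(f)_p ,
\]
with $C_1=C_1(F,p)$; feeding inequalities of this type for the non-increasing sequence $\bigl(\ww E_\nu(f)_p\bigr)_{\nu}$ into the iterative summation argument of~\cite{R94} produces the displayed Hardy-type estimate, and hence \eqref{lemR+.1}.

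I expect the main obstacle to be exactly this last, purely sequential, step: as for the ordinary moduli, the Hardy-type estimate cannot be deduced from the displayed inequality used at a single scale, and one must run the full iteration of~\cite{R94} — this is where the strength of \eqref{lemR+.2}, beyond the mere monotonicity of $\ww E_\nu(f)_p$, is genuinely used. A minor additional nuisance, absent for the ordinary moduli, is the dilation factor $\tfrac{r+1}{r}$ in $(c_\tau)$ (which has no counterpart in $(c_\w)$) and which at each step has to be absorbed into the constants via $(d_\tau)$.
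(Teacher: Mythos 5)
Your necessity direction and your general framing (transplant Rathore's argument, absorbing the extra dilation factor from $(c_\tau)$ via $(d_\tau)$) are fine, but the sufficiency direction has a genuine gap at exactly the step you flag, and it cannot be repaired along the route you chose. The reduction to the Hardy-type estimate $\sum_{\nu=0}^n\ww E_\nu(f)_p\le Cn\ww E_n(f)_p$ is a reduction to a statement that is \emph{false} under the hypotheses: for a piecewise constant $f$ with a jump and $p=1$ one has $\tau(f,\d)_1\asymp\tau_2(f,\d)_1\asymp\d$ and $\ww E_\nu(f)_1\asymp\nu^{-1}$, so \eqref{lemR+.2} (and indeed \eqref{lemR+.1}) holds, while $\sum_{\nu\le n}\ww E_\nu(f)_1\asymp\log n$ is not $\mathcal{O}(n\cdot n^{-1})$. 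Consistently with this, your ``tool'' inequality $\ww E_{\nu-1}(f)_p\le\frac{C_1}{\nu}\sum_{\mu\le\lfloor\nu/2\rfloor}\ww E_\mu(f)_p$ is too weak to imply the Hardy bound by any iteration: the non-increasing sequence $a_\nu=(\nu+1)^{-1}$ satisfies it for every $\nu\ge2$ with $C_1=1$, yet violates the Hardy bound. The content of \eqref{lemR+.2} is lost the moment you pass back down to $\tau_1$ and its inverse inequality with constant weights.

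The paper's proof keeps the whole argument at the level of $\tau_2$ and its inverse inequality \eqref{leJBt.2} with $r=2$ (weights $(\nu+1)$, normalization $n^{-2}$). From \eqref{lemR+.2}, $(c_\tau)$, and $(d_\tau)$ one first derives the improved dilation $\tau_2(f,\l h)_p\le2(1+2\l)F\tau_2(f,h)_p$ --- linear rather than cubic growth in $\l$ --- which yields $\frac{1}{n^{2}}\sum_{\nu=0}^n(\nu+1)\ww E_\nu(f)_p\le14c_1F\tau_2(f,n^{-1})_p$; note that the quadratic weights are what kill the logarithm in the example above. One then applies \eqref{leJBt.2} at the scale $mn$, splits the sum at $\nu=n$, uses the monotonicity of $\ww E_\nu(f)_p$ only over the range $(n,mn]$, and invokes the improved dilation once more to compare $\tau_2(f,1/(mn))_p$ with $\tau_2(f,1/n)_p$; choosing $m$ large enough gives $\ww E_n(f)_p\ge c\,\tau_2(f,n^{-1})_p$, and \eqref{lemR+.1} follows from \eqref{lemR+.2}. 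If you want to salvage your write-up, replace the Hardy-type target by this two-scale comparison for $\tau_2$.
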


\begin{proof}
First, we prove the sufficiency. By~\eqref{lemR+.2} and properties $(c_\tau)$ and
$(d_\tau)$, we get
\begin{equation}\label{lemR+.3}
      \tau_2(f,\l h)_p\le 2(1+2\l)F\tau_2(f,h)_p,\quad \l>0.
\end{equation}
Then, by the direct inequality~\eqref{leJBt.1} and~\eqref{lemR+.3},
we have
\begin{equation}\label{rathInv}
\begin{split}
      \frac{1}{n^{2}}\sum_{\nu=0}^n(\nu+1)&\ww E_\nu(f)_p\le
      \frac{c_1}{n^{2}}\sum_{\nu=0}^n(\nu+1)\tau_2\(f,\frac
      {1}{\nu+1}\)_p\le\\
      &\le\frac{2c_1F}{n^{2}}\tau_2(f,n^{-1})_p\sum_{\nu=0}^n(\nu+1)\(1+\frac{2n}{\nu+1}\)\le
      14c_1F\tau_2(f,n^{-1})_p.
\end{split}
\end{equation}
Let $m\in \N$. Applying~\eqref{leJBt.2} and~\eqref{rathInv}, we obtain 
\begin{equation*}
\begin{split}
       \tau_2\left(f,\frac{1}{mn} \right)_p
       &\le\frac{c_2}{(mn)^{2}}\(\sum_{\nu=n+1}^{mn}(\nu+1)\ww E_{\nu}(f)_p+
       \sum_{\nu=0}^n(\nu+1)\ww E_{\nu}(f)_p \)\\
       &\le \frac{c_2}{(mn)^{2}}\(\sum_{\nu=n+1}^{mn}
      (\nu+1)\ww E_{\nu}(f)_p+{14c_1F}{n^{2}}
      \tau_2\left(f,\frac 1n\right)_p\),
\end{split}
\end{equation*}
which implies that
\begin{equation*}
      \sum_{\nu=n+1}^{mn}(\nu+1)\ww E_{\nu}(f)_p\ge
      \frac{(mn)^{2}}{c_2}\tau_2\left(f,\frac{1}{mn}\right)_p-14c_1Fn^{2}\tau_2\left(f,\frac
     1n\right)_p.
\end{equation*}
Then, using the monotonicity of $\ww E_n(f)_p$ and again inequality~\eqref{lemR+.3}, we get
\begin{equation}\label{rathInv+}
      \ww E_n(f)_p\sum_{\nu=n+1}^{mn}(\nu+1)\ge n^{2}\(\frac{m}{6c_2F}-14c_1F\)
      \tau_2\left(f,n^{-1}\right)_p.
\end{equation}
Thus, choosing an appropriate $m$ in~\eqref{rathInv+}, we can find a positive constant $c$ independent of $n$ such that
\begin{equation*}
      \ww E_n(f)_p\ge c\tau_2\left(f,n^{-1}\right)_p.
\end{equation*}
From the last inequality and~\eqref{lemR+.2}, we derive~\eqref{lemR+.1}.

   The necessity follows directly from Jackson's-type inequality~\eqref{leJBt.1}.
\end{proof}

\subsection{Marcinkiewicz-Zygmund type inequalities}

The following Marcinkiewicz-Zygmund type inequality was established in~\cite{O86}, see also~\cite{LMN}.

\begin{lemma}\label{leMZ1}
  Let $1\le p<\infty$, $m\in \N$, and let $(x_k)_{k=1}^m$ be such that
  $0\le x_1<x_2<\dots<x_m<1$ and $\d=\min_k(x_{k+1}-x_k)$, where $x_{n+1}=1+x_1$. Then, for each $T_n\in \mathcal{T}_n$, $n\in \N$, we have
\begin{equation*}
  \sum_{k=1}^m |T_n(x_k)|^p\le (p+1)\frac e2 \(2n+\frac1\delta\)\int_0^1 |T_n(x)|^p dx.
\end{equation*}
\end{lemma}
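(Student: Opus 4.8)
The plan is to prove the pointwise bound $|T_n(x_k)|^p \le \frac{p+1}{2}\cdot\frac{1}{|I_k|}\int_{I_k}|T_n(x)|^p\,dx$ on a suitable interval $I_k$ around $x_k$, and then sum over $k$ using the separation hypothesis $\delta = \min_k(x_{k+1}-x_k)$ to control the overlap of the intervals. The key classical tool is the following elementary fact about algebraic/trigonometric polynomials: for $g = |T_n|^p$ (which, while not a polynomial, satisfies a useful differential inequality), one has good control of $g$ on an interval in terms of its average. More precisely, the workhorse is the estimate that for $T_n\in\mathcal{T}_n$ and any interval $J$ of length $|J|\le \pi/n$ containing a point $y$,
\[
|T_n(y)|^p \le \frac{C}{|J|}\int_J |T_n(x)|^p\,dx
\]
with an absolute-type constant; the sharp constant $(p+1)e/2$ and the scaling $2n + 1/\delta$ come from being careful.

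\textbf{Step 1 (a pointwise-to-average estimate).} Fix $k$ and set $y=x_k$. Choose an interval $I_k=[y-\lambda,\,y+\lambda]$ with $2\lambda$ to be specified, comparable to $\min(1/n,\delta)$. On $I_k$, estimate $|T_n(y)|$ by writing, for $x\in I_k$,
\[
|T_n(y)|^p \le |T_n(x)|^p + \big||T_n(y)|^p - |T_n(x)|^p\big| \le |T_n(x)|^p + p\,|x-y|\,\max_{t\in I_k}|T_n(t)|^{p-1}|T_n'(t)|,
\]
using the mean value theorem applied to $t\mapsto |T_n(t)|^p$ and the inequality $\big|\,|a|^p-|b|^p\,\big|\le p|a-b|\max(|a|,|b|)^{p-1}$. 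Averaging this over $x\in I_k$ and invoking the Bernstein inequality $\|T_n'\|_{I_k}\le$ (suitable factor)$\cdot n\,\|T_n\|_{I_k}$ lets one absorb the derivative term when $\lambda$ is a small enough multiple of $1/n$; a Nikol'skii-type inequality on the short interval $I_k$ converts the sup-norm back into the $L_p$-average. Bookkeeping the constants (this is where the factor $p+1$ enters, via an optimization in $\lambda$) yields $|T_n(x_k)|^p \le \frac{(p+1)e}{2}\cdot\frac{1}{|I_k|}\int_{I_k}|T_n(x)|^p\,dx$ with $|I_k|\asymp \big(2n+\tfrac1\delta\big)^{-1}$.

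\textbf{Step 2 (summation with controlled overlap).} Now sum over $k=1,\dots,m$. Because consecutive sample points are $\ge\delta$ apart, the intervals $I_k$ of common length $|I_k|=c\big(2n+\tfrac1\delta\big)^{-1}$ have bounded overlap: each point of $\T$ lies in at most a fixed number of the $I_k$ (and if $|I_k|$ is chosen $\le\delta$, in at most one or two), so $\sum_k \int_{I_k}|T_n|^p \le C\int_0^1|T_n(x)|^p\,dx$. Combining with Step 1 gives
\[
\sum_{k=1}^m |T_n(x_k)|^p \le \frac{(p+1)e}{2}\Big(2n+\frac1\delta\Big)\int_0^1|T_n(x)|^p\,dx,
\]
which is the claim.

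\textbf{Main obstacle.} The routine part is the pointwise-to-average step; the delicate part is extracting exactly the stated constant $\frac{(p+1)e}{2}$ and exactly the factor $2n+\frac1\delta$ (rather than a weaker $O(n+\delta^{-1})$ with an unspecified constant). This forces one to choose the half-length $\lambda$ of $I_k$ optimally in terms of $p$ and $n$ (the $e$ in the constant is the tell-tale sign of an optimization of the form $\sup_\lambda (1+\tfrac{a}{k})^{-k}$-type or $(1-\tfrac1k)^k\to e^{-1}$), and to use the sharp form of the Nikol'skii/Bernstein inequality on a short interval rather than a crude one. Alternatively — and this is presumably how \cite{O86} proceeds — one replaces $|T_n|^p$ by a genuine nonnegative trigonometric polynomial $U\in\mathcal{T}_{\lceil np/2\rceil}$ with $U\ge |T_n|^p$ and $\int U \le C(p)\int|T_n|^p$ (via the standard $|T_n|^p\le$ polynomial majorant trick), reducing everything to the case $p=1$, where the pointwise bound $U(x_k)\le$ average is classical and the constants are transparent; the factor $p+1$ then tracks the degree inflation in the majorization step. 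I would present the proof along these lines, citing \cite{O86} and \cite{LMN} for the constant bookkeeping and giving the overlap argument of Step 2 in detail since that is where the separation constant $\delta$ is used.
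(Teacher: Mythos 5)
The paper contains no proof of Lemma~\ref{leMZ1} to compare yours with: the inequality is quoted, with its exact constant, from Oskolkov~\cite{O86} (see also~\cite{LMN}). So the only question is whether your sketch stands on its own, and it does not quite. Your Step~2 is fine: since $x_{k+1}-x_k\ge\delta$, intervals of length at most $\delta$ attached to the nodes have pairwise disjoint interiors, so the summation and overlap count go through. The genuine gap is in Step~1, and it is not merely unfinished bookkeeping.

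Two concrete problems. First, the chain you actually write down (mean value theorem for $t\mapsto|T_n(t)|^p$, averaging over $I_k$, Bernstein, H\"older) yields, in its sharpest clean form,
\[
\sum_{k=1}^m|T_n(x_k)|^p\le\Big(\frac1\delta+2\pi np\Big)\int_0^1|T_n(x)|^p\,dx ,
\]
obtained by summing the one-sided fundamental-theorem estimate over disjoint intervals of length $\delta$ and then using $\|T_n'\|_p\le2\pi n\|T_n\|_p$. Since $2\pi p>(p+1)e$ for every $p\ge1$, the coefficient of $n$ here strictly exceeds the one in the statement, and no tuning of the half-length $\lambda$ repairs this: the loss sits in Bernstein's constant $2\pi n$, not in the interval length, whereas the factor $e$ in Oskolkov's bound comes from a different device (a Taylor-series/one-sided kernel estimate), not from an optimization inside your scheme. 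Second, the ``Nikol'skii-type inequality on the short interval $I_k$'' that you invoke to absorb the derivative term is itself a pointwise-to-average bound of exactly the type being proven, so as written the argument is circular there; moreover a local Bernstein inequality on an interval much shorter than $1/n$ is false, so the case $\delta\ll1/n$ needs separate handling. Your alternative route via nonnegative trigonometric majorants and Fej\'er--Riesz is sound for even $p$, but for general $p$ the degree and integral inflation in the majorization is precisely where the factor $p+1$ would have to be extracted, and you do not extract it. In summary, your outline proves a bound of the form $C(p)(n+\delta^{-1})\|T_n\|_p^p$ with an unspecified $C(p)$ --- which would suffice for every use of the lemma in this paper --- but it does not establish the stated constant $(p+1)\frac e2(2n+\frac1\delta)$; for that one must cite or reproduce the argument of~\cite{O86}.
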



\section{Main results}

\subsection{Direct and inverse inequalities}


\begin{theorem}\label{th1+}
  Let $f\in L_p(\T)$, $1\le p<\infty$, and $r,n\in \N$. Suppose $G_n(f)$ satisfies~\eqref{c1} and~\eqref{c2'} with $s\le 2r$. Then
  \begin{equation}\label{th1.1'}
     \|f-G_n(f)\|_p\le K_1\|f_{\g/n,\,r}-f\|_{\ell_p(X_n)}+{C_1}\w_s(f,1/n)_p.
  \end{equation}
If, additionally, \eqref{c1'} holds, then
  \begin{equation}\label{th1.2'}
     K_2\|f_{\g/n,\,r}-f\|_{\ell_p(X_n)}-{C_2}\w_s(f,1/n)_p\le \|f-G_n(f)\|_p.
  \end{equation}
Here, $C_1$ and $C_2$ are some positive constants independent of $f$ and $n$.
\end{theorem}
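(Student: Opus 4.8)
The plan is to transfer the problem to the polynomial of best $L_p$-approximation and to exploit two features of the Steklov averaged operator $g\mapsto g_{\g/n,r}$: first, by \eqref{st1} it perturbs the $L_p$-norm only by $\w_{2r}(\cdot,\g/n)_p$; second, being an integral mean it is a convolution operator, so it maps $L_p(\T)$ into continuous functions whose values at nodes with spacing at least $\g/n$ are controlled by the $L_p$-norm, and it carries $\mathcal{T}_n$ into $\mathcal{T}_n$.

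I first record two auxiliary facts. (i) If $g\in L_p(\T)$ and $0<\d\le\min_k(x_{k+1,n}-x_{k,n})$, then $\|g_{\d,r}\|_{\ell_p(X_n)}\le c(r,p)(n\d)^{-1/p}\|g\|_p$; this follows by writing out the definition of $g_{\d,r}$, applying Jensen's inequality to the averaging integral and the discrete convexity inequality to the finite sum over $\nu$, substituting $u=\tfrac{r-\nu}{r}t$ in each summand, and using that for each fixed $\nu$ the intervals $[x_{k,n}-\tfrac{r-\nu}{r}\tfrac\d2,\,x_{k,n}+\tfrac{r-\nu}{r}\tfrac\d2]$, $k=1,\dots,n$, are pairwise disjoint — this is precisely the mechanism behind Proposition~\ref{cor1+}. (ii) The operator $g\mapsto g_{\d,r}$ is linear and maps $\mathcal{T}_n$ into $\mathcal{T}_n$ (each $e^{2\pi\i kx}$ is an eigenfunction of it), and, by Lemma~\ref{leMZ1} together with~\eqref{g}, $\|P\|_{\ell_p(X_n)}\le c(p,\g)\|P\|_p$ for all $P\in\mathcal{T}_n$.

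Fix $n\in\N$ and let $T_n\in\mathcal{T}_n$ satisfy $\|f-T_n\|_p=E_n(f)_p$. By Lemma~\ref{leJB}, $E_n(f)_p\le c_1(s)\w_s(f,1/n)_p$; by~\eqref{NS}, $\|T_n^{(s)}\|_p\le c_3(s)n^s\w_s(f,1/n)_p$ and $\|T_n^{(2r)}\|_p\le c_3(2r)n^{2r}\w_{2r}(f,1/n)_p$. Hence, using~\eqref{c2'}, $\|T_n-G_n(T_n)\|_p\le K_3c_3(s)\w_s(f,1/n)_p$, while $(e_\w)$, \eqref{st1}, $\g\le1$, and $(c_\w)$ applied $2r-s\ge0$ times (which requires $s\le2r$) give $\|T_n-(T_n)_{\g/n,r}\|_p\le\kappa_1(r)\w_{2r}(T_n,\g/n)_p\le\kappa_1(r)c_3(2r)2^{2r-s}\w_s(f,1/n)_p$. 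Thus every ``polynomial'' quantity is $\le C\w_s(f,1/n)_p$. For the direct inequality, split $f-G_n(f)=(f-T_n)+(T_n-G_n(T_n))+G_n(T_n-f)$ and apply~\eqref{c1} to the last term; writing, by linearity of the Steklov operator,
$$T_n-f=(f_{\g/n,r}-f)+(T_n-f)_{\g/n,r}+\big(T_n-(T_n)_{\g/n,r}\big),$$
the second summand is estimated by~(i) with $g=T_n-f$, $\d=\g/n$, and the third by~(ii), so $\|T_n-f\|_{\ell_p(X_n)}\le\|f_{\g/n,r}-f\|_{\ell_p(X_n)}+C\w_s(f,1/n)_p$, which yields~\eqref{th1.1'} with coefficient exactly $K_1$ in front of $\|f_{\g/n,r}-f\|_{\ell_p(X_n)}$. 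For the inverse inequality, apply~\eqref{c1'} to $h=f-f_{\g/n,r}\in L_p(\T)$ and decompose
$$f-f_{\g/n,r}=(f-T_n)-(f-T_n)_{\g/n,r}-\big((T_n)_{\g/n,r}-T_n\big);$$
applying $G_n$, the first term equals $(G_n(f)-f)+(f-T_n)+(T_n-G_n(T_n))$, so its $L_p$-norm is $\le\|f-G_n(f)\|_p+C\w_s(f,1/n)_p$, and the remaining two terms are controlled via~\eqref{c1} by~(i) and~(ii) as above, giving~\eqref{th1.2'}.

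The main difficulty — and the reason the hypotheses are cast through~\eqref{c1}--\eqref{c2'} and the preparatory results — is that $f_{\g/n,r}$ is in general only of class $W_p^1$ (it is a fixed linear combination of ordinary Steklov means, hence a convolution with a piecewise-constant kernel), so~\eqref{c2'} cannot be invoked for $f_{\g/n,r}$ itself when $s\ge2$. The remedy is to run~\eqref{c2'} on the best polynomial $T_n$ and to absorb the mismatch between $f$ and $T_n$ at the sampling nodes through the Marcinkiewicz--Zygmund inequality for the polynomial piece $(T_n)_{\g/n,r}-T_n$ and through the elementary disjoint-intervals/Jensen bound~(i) for the averaged remainder $(T_n-f)_{\g/n,r}$.
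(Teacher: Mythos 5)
Your proof is correct and follows essentially the same route as the paper: reduction to the best-approximation polynomial $T_n$, the three-term splitting of $f-T_n$ at the nodes through the Steklov average, the disjoint-intervals/H\"older bound for $(f-T_n)_{\g/n,\,r}$, and the Marcinkiewicz--Zygmund inequality for the polynomial piece $(T_n)_{\g/n,\,r}-T_n$. The only (immaterial) difference is in the inverse half, where the paper bounds $\|f_{\g/n,\,r}-G_n(f_{\g/n,\,r})\|_p$ by inserting $(T_n)_{\g/n,\,r}$ and invoking~\eqref{c2'} for that polynomial, whereas you decompose $f-f_{\g/n,\,r}$ before applying $G_n$ and push the extra pieces through~\eqref{c1}; the ingredients are identical.
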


\begin{proof}
Let $T_n\in \mathcal{T}_n$ be such that $\|f-T_n\|_p=E_n(f)_p$. Then, using~\eqref{leJB.1}, \eqref{c2'}, \eqref{c1}, and~\eqref{NS}, we have
\begin{equation}\label{th1.3'}
\begin{split}
    \|f-G_n(f)\|_p&\le \|f-T_n\|_p+\|T_n-G_n(T_n)\|_p+\|G_n(T_n-f)\|_p\\
&\le c_1\w_s(f,1/n)_p+K_3n^{-s}\|T_n^{(s)}\|_p+K_1\|f-T_n\|_{\ell_p(X_n)}\\
&\le (c_1+c_3K_3)\w_s(f,1/n)_p+K_1\|f-T_n\|_{\ell_p(X_n)}.
\end{split}
\end{equation}
Let $\d\in (0,\min_{k}(x_{k+1}-x_{k})]$, where $x_k=x_{k,n}$, $k=1,\dots,n+1$. We estimate $\|f-T_n\|_{\ell_p(X_n)}$ as follows:
\begin{equation}\label{th1.4'}
  \begin{split}
      \|f-T_n\|_{\ell_p(X_n)}\le \|f-f_{\d,r}\|_{\ell_p(X_n)}+\|f_{\d,r}-(T_n)_{\d,r}\|_{\ell_p(X_n)}+\|(T_n)_{\d,r}-T_n\|_{\ell_p(X_n)}.
  \end{split}
\end{equation}
By H\"older's inequality and~\eqref{leJB}, we obtain
\begin{equation}\label{th1.5'}
  \begin{split}
      \|f_{\d,r}-&(T_n)_{\d,r}\|_{\ell_p(X_n)}^p\\
&\le \frac1n\sum_{k=1}^n\(\sum_{\nu=0}^{r-1}\frac2{\d}\int_{-\d/2}^{\d/2}|f(x_k+\tfrac{r-\nu}{r}t)-T_n(x_k+\tfrac{r-\nu}{r}t)|dt\)^p\\
&=\frac1n\sum_{k=1}^n\(\sum_{\nu=0}^{r-1}\(\frac{r}{r-\nu}\)\frac2{\d}\int_{-(r-\nu)\d/2r}^{(r-\nu)\d/2r}|f(x_k+t)-T_n(x_k+t)|dt\)^p\\
&\le \frac1n\sum_{k=1}^n\(\frac{2r^2}{\d}\int_{x_k-\d/2}^{x_k+\d/2}|f(t)-T_n(t)|dt\)^p\\
&\le \frac{2^pr^{2p}}{n\d}\sum_{k=1}^n\int_{x_k-\d/2}^{x_k+\d/2}|f(t)-T_n(t)|^pdt\\
&\le \frac{2^pr^{2p}}{n\d}\|f-T_n\|_p^p\le \frac{2^pr^{2p}c_1^p}{n\d}\w_s(f,1/n)_p^p.
  \end{split}
\end{equation}
To estimate $\|(T_n)_{\d,r}-T_n\|_{\ell_p(X_n)}$, we use Lemma~\ref{leMZ1}, the upper estimate in~\eqref{st1}, $(e_\w)$, \eqref{NS},
and~$(c_\w)$:
\begin{equation}\label{th1.6'}
  \begin{split}
    \|(T_n)_{\d,r}-T_n\|_{\ell_p(X_n)}&\le c(n\d)^{-1/p}\|(T_n)_{\d,r}-T_n\|_{p}\le c(n\d)^{-1/p}\kappa_1\w_{2r}(T_n,\d)_p\\
 &\le c\kappa_1\d^{2r}(n\d)^{-1/p}\|T_n^{(2r)}\|_p\le c\kappa_1c_3(n\d)^{-1/p}\w_{2r}(f,1/n)_p\\
 &\le c\kappa_1c_32^{2r-s}(n\d)^{-1/p}\w_{s}(f,1/n)_p,
  \end{split}
\end{equation}
where $c=(\frac32(p+1)e)^{1/p}$. Thus, combining~\eqref{th1.4'}, \eqref{th1.5'}, and~\eqref{th1.6'}, we have
\begin{equation}\label{th1.7'}
  \begin{split}
      \|f-T_n\|_{\ell_p(X_n)}\le \|f-f_{\d,r}\|_{\ell_p(X_n)}+\(c\kappa_1c_32^{2r-s}+{2r^2c_1}\)(n\d)^{-1/p}\w_s(f,1/n)_p,
  \end{split}
\end{equation}
which together with~\eqref{th1.3'} implies~\eqref{th1.1'} when $\d=\g/n$.

\smallskip

Now we prove~\eqref{th1.2'}. Inequality~\eqref{c1'} yields
\begin{equation}\label{th1.8'}
\begin{split}
    K_2\|f_{\d,r}-f\|_{\ell_p(X_n)}&\le \|G_n(f_{\d,r}-f)\|_p\\
  &\le \|f-f_{\d,r}\|_p+\|f_{\d,r}-G_n(f_{\d,r})\|_p+\|f-G_n(f)\|_p.\\
\end{split}
\end{equation}
By~\eqref{st1}, properties $(c_\w)$ and $(a_\w)$, and the fact that $\d\le 1/n$, we get
\begin{equation}\label{th1.9'}
  \|f-f_{\d,r}\|_p\le \kappa_1\w_{2r}(f,\d)_p\le 2^{2r-s}\kappa_1\w_s(f,1/n)_p.
\end{equation}
Next, using Minkowski's inequality, \eqref{c2'}, and~\eqref{c1}, we obtain
\begin{equation}\label{th1.10'}
\begin{split}
\|f_{\d,r}-&G_n(f_{\d,r})\|_p\\
&\le \|f_{\d,r}-(T_n)_{\d,r}\|_p+\|(T_n)_{\d,r}-G_n((T_n)_{\d,r})\|_p+\|G_n(f_{\d,r}-(T_n)_{\d,r})\|_p\\
&\le \|f-T_n\|_p+K_3n^{-s}\|(T_n)_{\d,r}^{(s)}\|_p+K_1\|f_{\d,r}-(T_n)_{\d,r}\|_{\ell_p(X_n)}.
\end{split}
\end{equation}
Applying again Minkowski's inequality and~\eqref{NS}, we have
\begin{equation}\label{th1.11'}
  n^{-s}\|(T_n)_{\d,r}^{(s)}\|_p\le n^{-s}\|T_n^{(s)}\|_p\le c_3\w_s(f,1/n)_p.
\end{equation}
Thus, \eqref{th1.10'} together with~\eqref{leJB.1}, \eqref{th1.11'}, and~\eqref{th1.5'} yields
\begin{equation}\label{th1.12'}
  \|f_{\d,r}-G_n(f_{\d,r})\|_p\le \(c_1+c_3K_3+\frac{2r^{2}c_1K_1}{(\d n)^{1/p}}\)\w_s(f,1/n)_p.
\end{equation}
Finally, combining~\eqref{th1.8'}, \eqref{th1.9'}, \eqref{th1.12'}, and taking $\d=\g/n$, we arrive at~\eqref{th1.2'}.
\end{proof}

As a simple corollary of Theorem~\ref{th1}, we have the following convergence criteria of the sampling operators $G_n$, $n\in \N$, in $L_p(\T)$.

\begin{corollary}\label{th2}
  Let $1\le p<\infty$ and $r\in \N$. Suppose that $G_n$, $n\in \N$, satisfy conditions~\eqref{c1}, \eqref{c1'}, and~\eqref{c2'} with $s\le 2r$. Then $(G_n(f))_{n\in \N}$ converges to  $f$ in $L_p(\T)$ if and only if
$$
\|f_{\g/n,\,r}-f\|_{\ell_p(X_n)}\to 0\quad\text{as}\quad n\to \infty.
$$
\end{corollary}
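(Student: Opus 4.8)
The plan is to derive Corollary~\ref{th2} directly from the two-sided estimate of Theorem~\ref{th1+}, specialized to $\d=\g/n$ and the order $s=2r$ (any admissible $s\le 2r$ would do). The point is that the term $\w_s(f,1/n)_p$ appearing on the right-hand side of both~\eqref{th1.1'} and~\eqref{th1.2'} is a genuine integral modulus of smoothness of a fixed function $f\in L_p(\T)$, and hence by property $(a_\w)$ it tends to $0$ as $n\to\infty$. Thus that term contributes nothing to the limiting behaviour and the convergence of $\|f-G_n(f)\|_p$ to $0$ is governed entirely by the discrete quantity $\|f_{\g/n,r}-f\|_{\ell_p(X_n)}$.

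Concretely, for the ``if'' direction I would assume $\|f_{\g/n,r}-f\|_{\ell_p(X_n)}\to 0$. Applying~\eqref{th1.1'} with $s=2r$ gives
\begin{equation*}
  \|f-G_n(f)\|_p\le K_1\|f_{\g/n,r}-f\|_{\ell_p(X_n)}+C_1\w_{2r}(f,1/n)_p,
\end{equation*}
and both summands on the right go to $0$ as $n\to\infty$ — the first by hypothesis, the second by $(a_\w)$ since $1/n\to 0+$. Hence $\|f-G_n(f)\|_p\to 0$, i.e.\ $G_n(f)\to f$ in $L_p(\T)$. For the ``only if'' direction I would assume $\|f-G_n(f)\|_p\to 0$ and invoke~\eqref{th1.2'}, which under the additional hypothesis~\eqref{c1'} (assumed in the Corollary) yields
\begin{equation*}
  K_2\|f_{\g/n,r}-f\|_{\ell_p(X_n)}\le \|f-G_n(f)\|_p+C_2\w_{2r}(f,1/n)_p.
\end{equation*}
Dividing by $K_2>0$ and letting $n\to\infty$, the right-hand side tends to $0$ (again the modulus term by $(a_\w)$), so $\|f_{\g/n,r}-f\|_{\ell_p(X_n)}\to 0$ as well.

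There is essentially no obstacle here: the entire content has already been packed into Theorem~\ref{th1+}, and the corollary is just the observation that the ``error'' term $\w_s(f,1/n)_p$ is asymptotically negligible. The only small point to be careful about is that Theorem~\ref{th1+} requires $s\le 2r$, so one should fix a valid choice such as $s=2r$ and note that~\eqref{c2'} with $s=2r$ is among the standing assumptions of the Corollary; also one should keep in mind that $\g/n\le 1/n\le\min_k(x_{k+1,n}-x_{k,n})$ holds by~\eqref{g}, which is what licenses taking $\d=\g/n$ in Theorem~\ref{th1+}. (I note in passing that the statement references ``Theorem~\ref{th1}'' whereas the relevant result in the excerpt is labelled \texttt{th1+}; I would use Theorem~\ref{th1+} throughout the proof.) With these conventions in place the argument is a two-line deduction in each direction.
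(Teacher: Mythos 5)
Your proposal is correct and follows exactly the paper's own argument: the corollary is deduced from the two-sided bounds~\eqref{th1.1'} and~\eqref{th1.2'} of Theorem~\ref{th1+} together with the fact that $\w_s(f,1/n)_p\to 0$ by $(a_\w)$. Your parenthetical remark about the reference is also apt — the paper's proof cites Theorem~\ref{th1}, which appears to be a typo for Theorem~\ref{th1+}.
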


\begin{proof}
  The assertion follows directly from Theorem~\ref{th1} by taking into account that $\w_{s}(f,\d)_p\to 0$  as $\d\to 0$ in view of~$(a_\w)$.
\end{proof}


Using Theorem~\ref{th1+}, it is not difficult to find the exact order of convergence of $G_n(f)$ for functions $f$ satisfying certain special conditions.


\begin{corollary}\label{cor1}
 Let $f\in L_p(\T)$, $1\le p<\infty$, and $r\in \N$. Suppose  $G_n(f)$, $n\in \N$,  satisfy conditions~\eqref{c1}, \eqref{c1'}, and~\eqref{c2'} with $s\le 2r$. If 
  $$
  \w_s(f,1/n)_p=o\(\|f_{\g/n,\,r}-f\|_{\ell_p(X_n)}\),
  $$
  then
  $$
  \|f-G_n(f)\|_p\sim \|f_{\g/n,\,r}-f\|_{\ell_p(X_n)}.
  $$
\end{corollary}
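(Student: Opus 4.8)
The plan is to deduce Corollary~\ref{cor1} directly from Theorem~\ref{th1+} by combining the two-sided estimates \eqref{th1.1'} and \eqref{th1.2'} with the smallness hypothesis $\w_s(f,1/n)_p=o(\|f_{\g/n,r}-f\|_{\ell_p(X_n)})$. Write $A_n=\|f-G_n(f)\|_p$, $B_n=\|f_{\g/n,r}-f\|_{\ell_p(X_n)}$, and $\e_n=\w_s(f,1/n)_p$, so that the hypothesis reads $\e_n=o(B_n)$, i.e. $\e_n/B_n\to 0$ (in particular $B_n>0$ for all large $n$). Theorem~\ref{th1+} then gives the sandwich
\begin{equation*}
  K_2 B_n - C_2\e_n \le A_n \le K_1 B_n + C_1\e_n,\quad n\in\N,
\end{equation*}
with $K_1,K_2,C_1,C_2>0$ independent of $f$ and $n$.

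Dividing through by $B_n$ for $n$ large (so $B_n>0$) yields
\begin{equation*}
  K_2 - C_2\frac{\e_n}{B_n} \le \frac{A_n}{B_n} \le K_1 + C_1\frac{\e_n}{B_n}.
\end{equation*}
Since $\e_n/B_n\to 0$, the left-hand side exceeds $K_2/2>0$ and the right-hand side is below $2K_1$ for all sufficiently large $n$; hence $A_n/B_n$ is bounded above and below by positive constants for large $n$, which is exactly the relation $\|f-G_n(f)\|_p\sim\|f_{\g/n,r}-f\|_{\ell_p(X_n)}$ (recall that here $a_n\sim b_n$ means $0<\liminf a_n/b_n\le\limsup a_n/b_n<\infty$, equivalently $a_n\asymp b_n$ for large $n$). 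If a sharper reading of $\sim$ as genuine asymptotic equivalence with constant $1$ were intended, one would additionally need $K_1=K_2$, which is not available in general; so the weak (two-sided) interpretation is the correct one, and no extra argument is required.

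There is essentially no obstacle here: the work has already been done in Theorem~\ref{th1+}, and the corollary is a one-line division argument. The only point to be careful about is the degenerate case where $B_n=0$ for infinitely many $n$; but the hypothesis $\e_n=o(B_n)$ forces $B_n>0$ eventually (otherwise $\e_n/B_n$ is not defined along that subsequence, or, interpreting the little-$o$ relation as $\e_n\le c_n B_n$ with $c_n\to0$, it would force $\e_n=0$ there as well and the statement is then read only for large $n$). Thus it suffices to argue for all sufficiently large $n$, which is the standard convention for the symbol $\sim$ in this paper.
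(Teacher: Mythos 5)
Your argument is correct and is precisely the route the paper intends: Corollary~\ref{cor1} is stated as an immediate consequence of Theorem~\ref{th1+}, obtained by sandwiching $\|f-G_n(f)\|_p$ between~\eqref{th1.2'} and~\eqref{th1.1'} and absorbing the $\w_s(f,1/n)_p$ terms via the little-$o$ hypothesis. Your remarks on the interpretation of $\sim$ as a two-sided estimate for large $n$ and on the non-vanishing of $\|f_{\g/n,\,r}-f\|_{\ell_p(X_n)}$ are consistent with the paper's conventions, so nothing further is needed.
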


The next theorem is an analogue of Lemmas~\ref{lemR} and~\ref{lemR+} in the case of approximation by sampling operators.

\begin{theorem}\label{th1RG}
  Let $f\in L_p(\T)$, $1\le p<\infty$, and $r,n\in \N$. Suppose that $G_n$, $n\in \N$, satisfy conditions~\eqref{c1}, \eqref{c1'}, and~\eqref{c2'} with $s\le 2r$.  If $G_n(f)\in \mathcal{T}_n$ and there exists a constant $K>0$ such that
$$
\w_s(f,h)_p\le K\w_{s+1}(f,h)_p,\quad h>0,
$$
then
  \begin{equation*}
     \|f-G_n(f)\|_p\asymp\|f_{\g/n,\,r}-f\|_{\ell_p(X_n)}+\w_s(f,1/n)_p,
  \end{equation*}
where $\asymp$ is a two-sided inequality with positive constants depending only on $K$, $p$, $r$, and $s$.
\end{theorem}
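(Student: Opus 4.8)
The plan is to combine Theorem~\ref{th1+} with the inverse direction from the proof of Lemma~\ref{lemR+} and the Bernstein-type inequality~\eqref{NS}. The upper bound in the asserted equivalence is already contained in~\eqref{th1.1'} and~\eqref{th1.2'} of Theorem~\ref{th1+}, so the whole issue is the reverse inequality
$$
\w_s(f,1/n)_p+\|f_{\g/n,\,r}-f\|_{\ell_p(X_n)}\le C\|f-G_n(f)\|_p.
$$
By~\eqref{th1.2'} it suffices to dominate $\w_s(f,1/n)_p$ itself by $\|f-G_n(f)\|_p$, because then~\eqref{th1.2'} gives control of the discrete term as well (moving $C_2\w_s(f,1/n)_p$ to the right and bounding it by $\|f-G_n(f)\|_p$). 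So the crux reduces to a single inverse inequality: under the marchaud-type condition $\w_s(f,h)_p\le K\w_{s+1}(f,h)_p$, one has $\w_s(f,1/n)_p\le C\|f-G_n(f)\|_p$.

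First I would observe that $G_n(f)\in\mathcal T_n$, so $\|f-G_n(f)\|_p\ge E_n(f)_p$, and conversely $G_n(f)$ is a (not necessarily optimal) polynomial approximant, giving $E_n(f)_p\le \|f-G_n(f)\|_p$. Thus the problem is equivalent to showing $\w_s(f,1/n)_p\le C E_n(f)_p$ under the stated condition — which is exactly the content of Lemma~\ref{lemR} (with $r$ replaced by $s$), whose hypothesis $\w_s(f,\d)_p\le F\w_{s+1}(f,\d)_p$ is precisely the assumption of the theorem. Hence $\w_s(f,1/n)_p\le G E_n(f)_p\le G\|f-G_n(f)\|_p$, and feeding this into~\eqref{th1.2'} yields
$$
K_2\|f_{\g/n,\,r}-f\|_{\ell_p(X_n)}\le \|f-G_n(f)\|_p+C_2\w_s(f,1/n)_p\le (1+C_2 G)\|f-G_n(f)\|_p.
$$
Combining the two displays gives the lower estimate in the claimed $\asymp$, and Theorem~\ref{th1+} gives the matching upper estimate; tracking constants shows they depend only on $K$ (hence $G$), $p$, $r$, and $s$.

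The main obstacle is essentially bookkeeping rather than a genuine new difficulty: one must verify that the constant $F$ in Lemma~\ref{lemR} can be taken as an explicit function of the given $K$ (it can, since the condition $\w_s\le K\w_{s+1}$ is literally the hypothesis of that lemma with $F=K$), and that the constant $G$ produced by Lemma~\ref{lemR} depends only on $F$, $p$, $s$. If one prefers a self-contained argument avoiding the black-box citation of~\cite{R94}, the alternative is to replicate the iteration scheme from the proof of Lemma~\ref{lemR+}: use the Jackson inequality~\eqref{leJB.1} and the strengthened property $\w_s(f,\l h)_p\le c(1+\l)^{?}\,\w_s(f,h)_p$ coming from $\w_s\le K\w_{s+1}$ together with $(d_\w)$ to get $n^{-s}\sum_{\nu=0}^n(\nu+1)^{s-1}E_\nu(f)_p\le C_K\w_s(f,1/n)_p$, then combine with the inverse inequality~\eqref{leJB.2} on a dyadic block $[n,mn]$ and choose $m$ large to absorb the error term, arriving at $\w_s(f,1/n)_p\le cE_n(f)_p$. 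Either route is routine once the reduction $\|f-G_n(f)\|_p\gtrsim E_n(f)_p$ is noted; the only care needed is ensuring the Marchaud-type strengthening $(d_\w)$-with-$K$ is applied in the correct direction inside the summation.
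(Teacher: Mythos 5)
Your argument is correct and follows the paper's own proof essentially verbatim: the upper bound comes from Theorem~\ref{th1+}, and the lower bound is obtained by rearranging~\eqref{th1.2'} and then combining Lemma~\ref{lemR} with the observation that $E_n(f)_p\le\|f-G_n(f)\|_p$ since $G_n(f)\in\mathcal{T}_n$. Your extra remark on tracking the dependence of the constant $G$ in Lemma~\ref{lemR} on $K$, $p$, $s$ is a point the paper glosses over, but it changes nothing in the substance of the proof.
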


\begin{proof}
In view of Theorem~\ref{th1+}, it is enough to prove only the lower estimate. From~\eqref{th1.2'}, we derive
\begin{equation}\label{th3.2}
\begin{split}
    \|f_{\g/n,\,r}-f\|_{\ell_p(X_n)}&+\w_s(f,1/n)_p\\
  &\le \frac1{K_2}\(\|f-G_n(f)\|_p+(K_2+C_2)\w_s(f,1/n)_p\).
\end{split}
\end{equation}
It remains to apply Lemma~\ref{lemR} and to take into account that $E_n(f)_p\le \|f-G_n(f)\|_p$.
\end{proof}

The next result provides an analogue of Bernstein's type inverse inequalities~\eqref{leJB.2} and~\eqref{leJBt.2} for sampling operators.
To formulate it, we need the following additional assumption on $G_n(f)$ and $s\in \N$:
\begin{equation}\label{c4}
  \|(G_{2^{\nu}}(f)-G_{2^{\nu-1}}(f))^{(s)}\|_p\le K_{4}(s,p) 2^{s\nu}\|G_{2^{\nu}}(f)-G_{2^{\nu-1}}(f)\|_p, \quad \nu\in\N.
\end{equation}

\begin{theorem}\label{th3}
  Let $f\in L_p(\T)$, $1\le p<\infty$, and $r,n\in \N$. Suppose that $G_n$, $n\in \N$, satisfy conditions~\eqref{c1}, \eqref{c1'}, \eqref{c2'} with $s\le 2r$, and~\eqref{c4}. Then
  \begin{equation}\label{th3.1}
  \begin{split}
         \|f_{\g/n,\,r}&-f\|_{\ell_p(X_n)}+\w_s(f,1/n)_p\\
&\le C\bigg(\|f-G_n(f)\|_p
         +\frac1{n^s}\sum_{k=0}^{[\log_2 n]} 2^{sk}\(\|f-G_{2^{k}}(f)\|_p+\|f-G_{2^{k-1}}(f)\|_p\)\bigg),
  \end{split}
  \end{equation}
  where the constant $C$ does not depend on $f$ and $n$.
\end{theorem}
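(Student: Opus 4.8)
The plan is to bound the two summands on the left of~\eqref{th3.1} separately. For the discrete Steklov term one simply invokes the lower estimate~\eqref{th1.2'} of Theorem~\ref{th1+} (valid under~\eqref{c1}, \eqref{c1'}, \eqref{c2'}):
$$
\|f_{\g/n,\,r}-f\|_{\ell_p(X_n)}\le\frac1{K_2}\|f-G_n(f)\|_p+\frac{C_2}{K_2}\,\w_s(f,1/n)_p .
$$
Adding $\w_s(f,1/n)_p$ to both sides, we see that~\eqref{th3.1} follows as soon as we prove
$$
\w_s(f,1/n)_p\le C\Big(\|f-G_n(f)\|_p+\frac1{n^{s}}\sum_{k=0}^{m}2^{sk}\big(\|f-G_{2^{k}}(f)\|_p+\|f-G_{2^{k-1}}(f)\|_p\big)\Big),\qquad m:=[\log_2 n],
$$
with $C$ independent of $f$ and $n$. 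This is a Bernstein-type inverse estimate, which I would establish by the classical dyadic telescoping argument (cf.\ the classical inverse inequality~\eqref{leJB.2}), now using hypothesis~\eqref{c4} in place of the Nikol'skii--Stechkin inequality.

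By $(b_\w)$ and the trivial bound $\w_s(g,\d)_p\le 2^s\|g\|_p$,
$$
\w_s(f,1/n)_p\le 2^{s}\|f-G_{2^m}(f)\|_p+\w_s\big(G_{2^m}(f),1/n\big)_p .
$$
Since $2^m\le n<2^{m+1}$ we have $2^{sm}/n^{s}\ge 2^{-s}$, so the first term is controlled by the $k=m$ summand of the sum. For the second term one telescopes $G_{2^m}(f)=G_1(f)+\sum_{k=1}^{m}\big(G_{2^{k}}(f)-G_{2^{k-1}}(f)\big)$ and applies $(b_\w)$ again:
$$
\w_s\big(G_{2^m}(f),1/n\big)_p\le \w_s\big(G_1(f),1/n\big)_p+\sum_{k=1}^{m}\w_s\big(G_{2^{k}}(f)-G_{2^{k-1}}(f),1/n\big)_p .
$$
For each block (which lies in $W_p^s$ by~\eqref{c4}), property $(e_\w)$ followed by~\eqref{c4} gives
$$
\w_s\big(G_{2^{k}}(f)-G_{2^{k-1}}(f),1/n\big)_p\le n^{-s}\big\|\big(G_{2^{k}}(f)-G_{2^{k-1}}(f)\big)^{(s)}\big\|_p\le K_4\,n^{-s}\,2^{sk}\big\|G_{2^{k}}(f)-G_{2^{k-1}}(f)\big\|_p ,
$$
and the triangle inequality $\|G_{2^{k}}(f)-G_{2^{k-1}}(f)\|_p\le\|f-G_{2^{k}}(f)\|_p+\|f-G_{2^{k-1}}(f)\|_p$ turns the sum into a constant multiple of the sum in the statement. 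The leftover term $\w_s(G_1(f),1/n)_p$ is dealt with at the bottom level in the same way: the Bernstein bound $\big\|(G_1(f))^{(s)}\big\|_p\le c\,\|G_1(f)\|_p$ (e.g.\ the $\nu=0$ instance of~\eqref{c4} under the convention $G_{2^{-1}}(f):=0$, or classical Bernstein if $G_1(f)\in\mathcal{T}_1$) yields $\w_s(G_1(f),1/n)_p\le c\,n^{-s}\|G_1(f)\|_p\le c\,n^{-s}\big(\|f-G_1(f)\|_p+\|f\|_p\big)$, which is absorbed into the $k=0$ summand (reading $\|f-G_{2^{-1}}(f)\|_p$ as $\|f\|_p$). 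Combining all the estimates proves the displayed bound for $\w_s(f,1/n)_p$, and hence~\eqref{th3.1}.

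This is essentially the textbook ``telescope $+$ Bernstein'' derivation of an inverse theorem, so the only points needing attention are bookkeeping ones: choosing the truncation index $m=[\log_2 n]$ so that the tail $\|f-G_{2^m}(f)\|_p$ is absorbed by the top summand (which works precisely because $2^{sm}/n^{s}\asymp 1$), and disposing of the bottom block $G_1(f)$ by hand. The one genuinely indispensable analytic ingredient is hypothesis~\eqref{c4}: since the operators $G_n$ need not produce trigonometric polynomials, it is~\eqref{c4} --- and nothing else --- that supplies the Bernstein-type inequality permitting a derivative to be transferred onto each dyadic difference $G_{2^{k}}(f)-G_{2^{k-1}}(f)$, and without it the dyadic sum on the right-hand side could not be produced.
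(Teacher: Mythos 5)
Your proposal is correct and follows essentially the same route as the paper: reduce to an inverse estimate for $\w_s(f,1/n)_p$ via~\eqref{th1.2'}, peel off $2^s\|f-G_{2^m}(f)\|_p$ with $(b_\w)$, telescope $G_{2^m}(f)$ dyadically, and apply $(e_\w)$ together with~\eqref{c4} to each block; the paper even uses your same convention $G_{2^{-1}}(f):=0$ (written $G_{1/2}=0$) to fold the bottom block into the $k=0$ summand, and absorbs the term $2^s\|f-G_{2^m}(f)\|_p$ into the top summand exactly as you do, arriving at the constant $K_4+4^s$.
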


\begin{proof}
Denote $m=[\log_2 n]$, $G_{1/2}=0$, and $G_k=G_k(f)$, $k\in \N$. By $(b_\w)$ and $(c_\w)$, we have
\begin{equation}\label{th3.2+}
  \w_s(f,n^{-1})_p\le 2^s\|f-G_{2^m}\|_p+\w_s(G_{2^m},n^{-1})_p.
\end{equation}
Using the representation
$$
G_{2^m}=\sum_{\nu=0}^m(G_{2^\nu}-G_{2^{\nu-1}}),
$$
properties $(b_\w)$, $(e_\w)$, and~\eqref{c4}, we obtain
\begin{equation}\label{th3.2++}
  \begin{split}
      \w_s(G_{2^m},n^{-1})_p&\le \sum_{\nu=0}^m\w_s(G_{2^\nu}-G_{2^{\nu-1}},n^{-1})_p\\
      &\le \frac1{n^s}\sum_{\nu=0}^m\|(G_{2^\nu}-G_{2^{\nu-1}})^{(s)}\|_p\le \frac{K_{4}}{n^s}\sum_{\nu=0}^m2^{s\nu}\|G_{2^\nu}-G_{2^{\nu-1}}\|_p\\
      &\le \frac{K_{4}}{n^s}\sum_{\nu=0}^m2^{s\nu}(\|f-G_{2^\nu}\|_p+\|f-G_{2^{\nu-1}}\|_p).
   \end{split}
\end{equation}
Thus, by~\eqref{th3.2+} and~\eqref{th3.2++}, we have
\begin{equation}\label{th3.3}
  \w_s(f,n^{-1})_p\le\frac{K_{4}+4^s}{n^s}\sum_{k=0}^m2^{sk}(\|f-G_{2^k}\|_p+\|f-G_{2^{k-1}}\|_p).
\end{equation}
Finally, combining~\eqref{th3.2} and~\eqref{th3.3}, we get~\eqref{th3.1}.
\end{proof}

\begin{remark}\label{rem1}
In the case $G_\nu\in \mathcal{T}_\nu$, $\nu\in \N$, inequality~\eqref{th3.1} is simplified as follows:
  \begin{equation*}
  \begin{split}
         \|f_{\g/n,\,r}-f\|_{\ell_p(X_n)}+\w_s(f,1/n)_p\le C\bigg(\|f-G_n(f)\|_p+\frac1{n^s}\sum_{\nu=0}^n(\nu+1)^{s-1} \|f-G_\nu(f)\|_p\bigg).
  \end{split}
  \end{equation*}
The above estimate follows directly from~\eqref{th3.1} and~\eqref{leJB.2}.
\end{remark}

As an application of Theorems~\ref{th1+} and~\ref{th3}, we obtain the following result. 
\begin{corollary}\label{cor2}
Let $f\in L_p(\T)$, $1\le p<\infty$, $r,s\in \N$, $s\le 2r$, and $\a\in (0,s)$.  Suppose that $G_n(f)$, $n\in \N$, satisfy the conditions of Theorem~\ref{th3}. Then the following properties are equivalent:
    \begin{enumerate}
    \item[$(i)$]  $\|f-G_n(f)\|_p=\mathcal{O}(n^{-\a})$,
    \item[$(ii)$]  $\|f_{\g/n,\,r}-f\|_{\ell_p(X_n)}+\w_s(f,1/n)_p=\mathcal{O}(n^{-\a})$.
  \end{enumerate}
\end{corollary}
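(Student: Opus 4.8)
The plan is to derive both implications directly from the two previously established theorems, so that Corollary~\ref{cor2} becomes a purely formal consequence. Write $\Omega_n(f)=\|f_{\g/n,\,r}-f\|_{\ell_p(X_n)}+\w_s(f,1/n)_p$ for brevity; the goal is the equivalence $\|f-G_n(f)\|_p=\mathcal{O}(n^{-\a})\iff\Omega_n(f)=\mathcal{O}(n^{-\a})$.

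For the implication $(ii)\Rightarrow(i)$ I would simply invoke the direct inequality~\eqref{th1.1'} of Theorem~\ref{th1+}: it gives $\|f-G_n(f)\|_p\le K_1\|f_{\g/n,\,r}-f\|_{\ell_p(X_n)}+C_1\w_s(f,1/n)_p\le \max(K_1,C_1)\,\Omega_n(f)$, so an $\mathcal{O}(n^{-\a})$ bound on $\Omega_n(f)$ immediately transfers. This direction needs only~\eqref{c1} and~\eqref{c2'}, which are part of the hypotheses of Theorem~\ref{th3} and hence available.

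For the harder implication $(i)\Rightarrow(ii)$ I would use the inverse inequality~\eqref{th3.1} of Theorem~\ref{th3}, whose hypotheses coincide with those assumed here. Assuming $\|f-G_m(f)\|_p\le An^{-\a}$ for all $m$ (with the bound applied at $m=2^k$ and $m=2^{k-1}$), the right-hand side of~\eqref{th3.1} becomes
\begin{equation*}
C\Big(An^{-\a}+\frac{A}{n^s}\sum_{k=0}^{[\log_2 n]}2^{sk}\big(2^{-k\a}+2^{-(k-1)\a}\big)\Big)
\le CA\Big(n^{-\a}+\frac{1+2^\a}{n^s}\sum_{k=0}^{[\log_2 n]}2^{(s-\a)k}\Big).
\end{equation*}
Since $\a<s$, the exponent $s-\a$ is positive, so the geometric sum is $\asymp 2^{(s-\a)[\log_2 n]}\asymp n^{s-\a}$; dividing by $n^s$ leaves $\asymp n^{-\a}$. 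Hence $\Omega_n(f)\le C'A n^{-\a}$, which is $(ii)$. The only genuinely delicate point is this summation of the Bernstein-type tail, and in particular the fact that the condition $\a\in(0,s)$ is exactly what makes $\sum 2^{(s-\a)k}$ grow like its last term rather than diverging faster or converging; for $\a=s$ one would pick up a logarithmic factor, which is why the range is restricted. Everything else is a direct citation of Theorems~\ref{th1+} and~\ref{th3}, so no further obstacle is expected.
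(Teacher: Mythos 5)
Your proof is correct and is essentially the argument the paper intends: Corollary~\ref{cor2} is stated there without proof, as a direct application of Theorems~\ref{th1+} and~\ref{th3}, and your two implications (the direct inequality~\eqref{th1.1'} for $(ii)\Rightarrow(i)$, and the inverse inequality~\eqref{th3.1} together with the summation $\sum_{k\le \log_2 n}2^{(s-\a)k}\asymp n^{s-\a}$ for $(i)\Rightarrow(ii)$) are exactly the intended steps. The only microscopic point is the $k=0$ term of the sum, where by the paper's convention $G_{2^{-1}}=0$ so that $\|f-G_{2^{-1}}(f)\|_p=\|f\|_p$ rather than $A2^{\a}$; this contributes $\|f\|_p n^{-s}=\mathcal{O}(n^{-\a})$ and does not affect the conclusion.
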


\subsection{$K$-functionals, realizations, and smoothness of sampling operators}

Recall that for a given function $f\in L_p(\T)$, $1\le p<\infty$, $s\in \N$, and $\d>0$, the Peetre  $K$-functional is defined by
\begin{equation}\label{k0}
  K_s(f,\d)_p=\inf_{g\in W_p^s(\T)}(\|f-g\|_p+\d^s\|g^{(s)}\|_p).
\end{equation}
It is well known (see, e.g.,~\cite[Ch.~6]{DL}) that
\begin{equation}\label{k1}
  K_s(f,\d)_p\asymp \w_s(f,\d)_p,\quad f\in L_p(\T), \quad \d>0,
\end{equation}
where $\asymp$ is a two-sided inequality with positive constants independent of $f$ and $\d$. For related results for the averaged modulus of smoothness $\tau_s(f,\d)_p$ see, e.g.,~\cite{P83}, \cite{P84}. In this section, we establish an analogue of~\eqref{k1} for the following "semi-discrete" modification of the Peetre $K$-functional:
\begin{equation*}
  \mathcal{K}_s(f,X_n)_p:=\inf_{g\in W_p^s(\T)}(\|f-g\|_{\ell_p(X_n)}+\|f-g\|_p+n^{-s}\|g^{(s)}\|_p).
\end{equation*}

\begin{theorem}\label{thKw}
  Let $f\in L_p(\T)$, $1\le p<\infty$, $r,s\in \N$, $s\le 2r$, and $n\in \N$. Then
  \begin{equation}\label{thKw.1}
     \mathcal{K}_s(f,X_n)_p\asymp \|f_{\g/n,\,r}-f\|_{\ell_p(X_n)}+\w_s(f,1/n)_p,
  \end{equation}
where $\asymp$ is a two-sided inequality with positive constants independent of $f$ and $n$.
\end{theorem}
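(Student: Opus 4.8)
The plan is to prove the two inequalities in~\eqref{thKw.1} separately. For the upper estimate $\mathcal{K}_s(f,X_n)_p\lesssim \|f_{\g/n,\,r}-f\|_{\ell_p(X_n)}+\w_s(f,1/n)_p$, I would choose as the competitor $g$ in the infimum defining $\mathcal{K}_s(f,X_n)_p$ the polynomial $T_n\in\mathcal{T}_n$ of best $L_p$-approximation to $f$, so that $\|f-T_n\|_p=E_n(f)_p\le c_1\w_s(f,1/n)_p$ by~\eqref{leJB.1} and $n^{-s}\|T_n^{(s)}\|_p\le c_3\w_s(f,1/n)_p$ by~\eqref{NS}. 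The only nontrivial term is then $\|f-T_n\|_{\ell_p(X_n)}$, and this is estimated exactly as in the proof of Theorem~\ref{th1+}: inserting $f_{\d,r}$ and $(T_n)_{\d,r}$ with $\d=\g/n$ and using the decomposition~\eqref{th1.4'} together with the bounds~\eqref{th1.5'} (H\"older plus $\|f-T_n\|_p$) and~\eqref{th1.6'} (Lemma~\ref{leMZ1} plus~\eqref{st1} and~\eqref{NS}), one gets
\[
\|f-T_n\|_{\ell_p(X_n)}\le \|f_{\g/n,\,r}-f\|_{\ell_p(X_n)}+C\w_s(f,1/n)_p,
\]
since $(n\d)^{-1/p}=\g^{-1/p}$ is a constant. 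Summing the three contributions yields the upper bound.

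For the lower estimate I must bound both $\|f_{\g/n,\,r}-f\|_{\ell_p(X_n)}$ and $\w_s(f,1/n)_p$ by $C\,\mathcal{K}_s(f,X_n)_p$. Fix an arbitrary $g\in W_p^s(\T)$. For the modulus term, $\w_s(f,1/n)_p\le \w_s(f-g,1/n)_p+\w_s(g,1/n)_p\le 2^s\|f-g\|_p+n^{-s}\|g^{(s)}\|_p$ by $(b_\w)$ and $(e_\w)$, which is already dominated by the quantity inside the infimum. For the discrete term, write $f_{\g/n,\,r}-f=(f-g)-(f_{\g/n,\,r}-g_{\g/n,\,r})-(g-g_{\g/n,\,r})$ evaluated at the points of $X_n$, and estimate in $\ell_p(X_n)$. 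The term $\|f-g\|_{\ell_p(X_n)}$ is in the infimum. For $\|f_{\g/n,\,r}-g_{\g/n,\,r}\|_{\ell_p(X_n)}$ I would repeat the computation~\eqref{th1.5'} verbatim with $T_n$ replaced by $g$: since $f_{\d,r}-g_{\d,r}$ is an average of $f-g$ over an interval of length $\le \d$ around each $x_k$, H\"older's inequality and the separation $\d\le\min_k(x_{k+1}-x_k)$ (guaranteed by~\eqref{g}) give $\|f_{\d,r}-g_{\d,r}\|_{\ell_p(X_n)}\le 2r^2(n\d)^{-1/p}\|f-g\|_p=C\|f-g\|_p$. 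Finally, for $\|g-g_{\g/n,\,r}\|_{\ell_p(X_n)}$, since $g\in W_p^s\subset W_p^1$ and $g_{\d,r}-g$ is, by~\eqref{EQ}, an average of a difference operator applied to $g$, the pointwise bound $|g_{\d,r}(x_k)-g(x_k)|\le C\d^{-1}\int_{x_k-\d/2}^{x_k+\d/2}\w_s(g,t,\d)\,dt$ from the proof of Proposition~\ref{cor1+} (with $2r$ replaced throughout by $s$, using $(c_\w)$ to pass between orders) combined with the Marcinkiewicz–Zygmund idea gives $\|g-g_{\g/n,\,r}\|_{\ell_p(X_n)}\le C(n\d)^{-1/p}\tau_s(g,\d)_p$; then $(f_\tau)$ and $(e_\tau)$ yield $\tau_s(g,\d)_p\le c_4\d^s\|g^{(s)}\|_p$, so this term is at most $C\,n^{-s}\|g^{(s)}\|_p$ as $(n\d)^{-1/p}$ is constant.

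Collecting the three pieces gives $\|f_{\g/n,\,r}-f\|_{\ell_p(X_n)}\le C(\|f-g\|_{\ell_p(X_n)}+\|f-g\|_p+n^{-s}\|g^{(s)}\|_p)$, and adding the modulus estimate and taking the infimum over $g\in W_p^s(\T)$ completes the lower bound. The main obstacle is the term $\|g-g_{\g/n,\,r}\|_{\ell_p(X_n)}$: unlike the polynomial case, $g$ is not band-limited, so one cannot invoke the Bernstein-type inequality~\eqref{NS}, and the passage from the pointwise Steklov bound to an $\ell_p(X_n)$ estimate must go through the averaged modulus $\tau_s(g,\cdot)_p$ exactly as in Proposition~\ref{cor1+}, which is why the hypothesis $\d\le\min_k(x_{k+1}-x_k)$ and condition~\eqref{g} are essential. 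Everything else is a direct reuse of the estimates already established in Theorem~\ref{th1+} and Proposition~\ref{cor1+}.
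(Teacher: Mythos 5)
Your proposal is correct and follows essentially the same route as the paper: the upper bound via the best-approximation polynomial $T_n$ together with \eqref{leJB.1}, \eqref{NS}, and the estimate \eqref{th1.7'}, and the lower bound via the triangle-inequality splitting into $\|f_{\d,r}-g_{\d,r}\|_{\ell_p(X_n)}$, $\|g_{\d,r}-g\|_{\ell_p(X_n)}$, $\|g-f\|_{\ell_p(X_n)}$, handled by the \eqref{th1.5'}-type H\"older argument, Proposition~\ref{cor1+} combined with the $\tau$-modulus properties, and the standard $K$-functional bound for $\w_s$. (The algebraic identity you wrote for $f_{\g/n,r}-f$ has a sign slip, but the three terms you actually estimate are the correct ones, so nothing is affected.)
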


\begin{proof}
  First, we prove the upper estimate. Let $T_n\in \mathcal{T}_n$ be such that $\|f-T_n\|_p=E_n(f)_p$. Then, by the definition of $\mathcal{K}_s(f,X_n)_p$ and~\eqref{leJB.1}, \eqref{NS}, we have
\begin{equation*}
\begin{split}
    \mathcal{K}_s(f,X_n)_p&\le \|f-T_n\|_{\ell_p(X_n)}+\|f-T_n\|_p+n^{-s}\|T_n^{(s)}\|_p\\
&\le \|f-T_n\|_{\ell_p(X_n)}+(c_1+c_3)\w_s(f,n^{-1})_p.
\end{split}
\end{equation*}
Thus, applying inequality~\eqref{th1.7'}, we arrive at the upper estimate in~\eqref{thKw.1}.

Consider the lower estimate. Let $g\in W_p^s(\T)$ and $\d=\g/n$. Then
\begin{equation}\label{thKw.3}
  \begin{split}
     \|f_{\d,r}-f\|_{\ell_p(X_n)}\le \|f_{\d,r}-g_{\d,r}\|_{\ell_p(X_n)}+\|g_{\d,r}-g\|_{\ell_p(X_n)}+\|g-f\|_{\ell_p(X_n)}.
  \end{split}
\end{equation}
By the same arguments as in~\eqref{th1.5'}, we obtain
\begin{equation}\label{thKw.4}
  \begin{split}
      \|f_{\d,r}-g_{\d,r}\|_{\ell_p(X_n)}\le {2r^{2}}{(n\d)^{-1/p}}\|f-g\|_p= {2r^{2}}{\g^{-1/p}}\|f-g\|_p.
  \end{split}
\end{equation}
Now, we estimate $\|g_{\d,r}-g\|_{\ell_p(X_n)}$. Using Proposition~\ref{cor1+} as well as properties~$(c_\tau)$, $(e_\tau)$, and $(d_\tau)$, we get
\begin{equation}\label{thKw.5}
  \begin{split}
      \|g_{\d,r}-g\|_{\ell_p(X_n)}\le k_1(r)\g^{-1/p}\tau_{2r}(g,n^{-1})_p\le c_5\g^{-1/p}n^{-s}\|g^{(s)}\|_p,
  \end{split}
\end{equation}
where the constant $c_5$ depends only on $r$ and $s$. Thus, combining~\eqref{thKw.3}--\eqref{thKw.5}, we obtain
\begin{equation}\label{thKw.6}
  \begin{split}
     \|f_{\d,r}-f\|_{\ell_p(X_n)}\le \|f-g\|_{\ell_p(X_n)}+{2r^{2}}{\g^{-1/p}}\|f-g\|_p+c_5\g^{-1/p}n^{-s}\|g^{(s)}\|_p.
  \end{split}
\end{equation}

Next, \eqref{k1} yields
\begin{equation*}
  \w_s(f,1/n)_p\le 2^s\|f-g\|_p+n^{-s}\|g^{(s)}\|_p,
\end{equation*}
which together with~\eqref{thKw.6} implies that
\begin{equation*}
  \begin{split}
     \|f_{\d,r}-&f\|_{\ell_p(X_n)}+\w_s(f,1/n)_p\\
&\le \|g-f\|_{\ell_p(X_n)}+({2r^{2}}{\g^{-1/p}}+2^s)\|f-g\|_p+(c_5\g^{-1/p}+1)n^{-s}\|g^{(s)}\|_p.
   \end{split}
\end{equation*}
It remains to take the infimum over all $g\in W_p^s(\T)$.
\end{proof}

In addition to~\eqref{k1}, it is known (see, e.g.,~\cite{HI90}) that the modulus of smoothness is equivalent to the so-called realization of the $K$-functional:
\begin{equation}\label{Re}
  \w_s(f,1/n)_p\asymp \|f-T_n(f)\|_p+n^{-s}\|(T_n(f))^{(s)}\|_p,\quad n\in \N,
\end{equation}
where $T_n(f)$ is a trigonometric polynomial of degree at most $n$ such that $\|f-T_n(f)\|_p\lesssim \w_s(f,1/n)_p$.  
This holds, for example, for polynomials of near best approximation, de la Vall\'ee Poussin means, corresponding Riesz means, etc.
For various applications of realizations of the $K$-functionals see e.g.,~\cite{HI90}, \cite{KT20}--\cite{KT21}.
Below, we give an analogue of equivalence~\eqref{Re} for the sampling operator $G_n$.
To make the results more transparent, we additionally assume that
\begin{equation}\label{c3}
  n^{-s}\|(G_n(f))^{(s)}\|_p\le K_5(s,p)\,\w_s\(G_n(f),n^{-1}\)_p, \quad f\in L_p(\T), \quad n\in\N.
\end{equation}
Note that if $G_n(f)$ belongs to $\mathcal{T}_n$, then~\eqref{c3} follows directly from the Nikolskii-Stechkin-Boas inequality~\eqref{NS-}.
Estimate~\eqref{c3} also holds if $G_n(f)$ is a spline function in $\mathcal{S}_{m,n}$, where $m\ge s+1$ (see, e.g.,~\cite{HY95} and~\cite[Ch.~5]{DL}).

\begin{theorem}\label{th4} Let $f\in L_p(\T)$, $1\le p<\infty$, and $r,n\in \N$. Suppose that $G_n$, $n\in \N$, satisfy conditions~\eqref{c1}, \eqref{c1'}, \eqref{c2'} with $s\le 2r$, and~\eqref{c3}.
Then
\begin{equation}\label{th4.1}
  \|f_{\g/n,\,r}-f\|_{\ell_p(X_n)}+\w_s(f,1/n)_p\asymp \|f-G_n(f)\|_p+n^{-s}\|(G_n(f))^{(s)}\|_p,
\end{equation}
where $\asymp$ is a two-sided inequality with positive constants independent of $f$ and $n$.
\end{theorem}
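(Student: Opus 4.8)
The plan is to prove the two-sided equivalence~\eqref{th4.1} by combining Theorem~\ref{th1+} with the realization-type estimate~\eqref{Re} and the extra hypothesis~\eqref{c3}, much in the spirit of how Theorem~\ref{th1RG} was deduced. For the upper estimate (the right-hand side is bounded by the left), I would start from Theorem~\ref{th1+}, which already gives
$\|f-G_n(f)\|_p\le K_1\|f_{\g/n,r}-f\|_{\ell_p(X_n)}+C_1\w_s(f,1/n)_p$,
so it remains to bound $n^{-s}\|(G_n(f))^{(s)}\|_p$ by the left-hand side of~\eqref{th4.1}. Here I would invoke~\eqref{c3} to replace $n^{-s}\|(G_n(f))^{(s)}\|_p$ by $K_5\,\w_s(G_n(f),n^{-1})_p$, then use $(b_\w)$ to split $\w_s(G_n(f),n^{-1})_p\le \w_s(G_n(f)-f,n^{-1})_p+\w_s(f,n^{-1})_p$, and finally bound $\w_s(G_n(f)-f,1/n)_p\le 2^s\|f-G_n(f)\|_p$ via $(c_\w)$ and $(a_\w)$ (the same trick used in~\eqref{th3.2+}), followed by another application of Theorem~\ref{th1+} to control $\|f-G_n(f)\|_p$. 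Collecting terms yields the upper bound.

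For the lower estimate (the left-hand side is bounded by the right), the idea is to treat $G_n(f)$ as a near-best approximant and apply the realization equivalence~\eqref{Re}. Concretely, from $\|f-G_n(f)\|_p+n^{-s}\|(G_n(f))^{(s)}\|_p$, I would first note that by~\eqref{c3} and the same $(b_\w)$/$(c_\w)$ splitting as above, this quantity dominates $\w_s(f,1/n)_p$ up to an absolute constant — this is exactly the "hard direction" of a realization estimate and follows because $G_n(f)$ plays the role of the polynomial $T_n(f)$ in~\eqref{Re}, with $\|f-G_n(f)\|_p\lesssim \w_s(f,1/n)_p$ guaranteed by~\eqref{th1.1'} together with the already-established upper estimate. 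Once $\w_s(f,1/n)_p\lesssim \|f-G_n(f)\|_p+n^{-s}\|(G_n(f))^{(s)}\|_p$ is in hand, the term $\|f_{\g/n,r}-f\|_{\ell_p(X_n)}$ is controlled through~\eqref{th1.2'} of Theorem~\ref{th1+}:
$K_2\|f_{\g/n,r}-f\|_{\ell_p(X_n)}\le \|f-G_n(f)\|_p+C_2\w_s(f,1/n)_p$,
and substituting the bound just obtained for $\w_s(f,1/n)_p$ finishes the lower estimate.

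The main obstacle I anticipate is the self-referential structure of the argument: both directions of~\eqref{th4.1} want to use the other, and~\eqref{c3} only relates the derivative of $G_n(f)$ to a modulus of $G_n(f)$ itself, not of $f$. The clean way around this is to establish first, unconditionally from Theorem~\ref{th1+} alone, the one-sided bound $\|f-G_n(f)\|_p\le K_1\|f_{\g/n,r}-f\|_{\ell_p(X_n)}+C_1\w_s(f,1/n)_p$, and then to run a short bootstrap: feed this into~\eqref{c3} to get the upper estimate in~\eqref{th4.1}, and only afterwards use the fact that the right-hand side of~\eqref{th4.1} is now known to be $\gtrsim \w_s(f,1/n)_p$ (a consequence of~\eqref{c3} plus the realization philosophy) to close the lower estimate via~\eqref{th1.2'}. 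Apart from this ordering issue, everything reduces to the routine estimates already appearing in the proofs of Theorems~\ref{th1+} and~\ref{th3}, so no genuinely new inequality is needed.
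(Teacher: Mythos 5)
Your proposal is correct and follows essentially the same route as the paper: both directions come from Theorem~\ref{th1+} combined with~\eqref{c3} and the elementary chain $\w_s(f,1/n)_p\le 2^s\|f-G_n(f)\|_p+n^{-s}\|(G_n(f))^{(s)}\|_p$ obtained from $(b_\w)$, $(c_\w)$, $(e_\w)$. The only inaccuracy is your worry about circularity and your labelling of that inequality as the ``hard direction'' of the realization~\eqref{Re}: it holds for any $g\in W_p^s(\T)$ in place of $G_n(f)$ without any near-best-approximation property, so no bootstrap is needed.
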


\begin{proof}
The estimate from above follows from~\eqref{th3.2} and the standard properties of moduli of smoothness, c.f.~\eqref{Re}.
To prove the estimate from below, we note that by~\eqref{c3}, $(b_\w)$, and $(c_\w)$,
  \begin{equation}\label{th4.4}
    \begin{split}
      n^{-s}\|(G_n(f))^{(s)}\|_p\le K_5\w_s(G_n(f),n^{-1})_p\le K_5(2^{s}\|f-G_n(f)\|_p+\w_s(f,n^{-1})_p).
    \end{split}
  \end{equation}
  Thus, applying~\eqref{th1.1'} to~\eqref{th4.4}, we obtain
    \begin{equation*}
    \begin{split}
       \|f&-G_n(f)\|_p+n^{-s}\|(G_n(f))^{(s)}\|_p\\
       &\le (1+2^sK_5)\|f-G_n(f)\|_p+K_5\w_s(f,n^{-1})_p\\
       &\le K_1(1+2^sK_5)\|f_{\g/n,\,r}-f\|_{\ell_p(X_n)}+(C_1(1+2^sK_5)+K_5)\w_s(f,n^{-1})_p,
    \end{split}
  \end{equation*}
  which implies the estimate from below in~\eqref{th4.1}.
\end{proof}

It follows from~\eqref{th4.1} that
\begin{equation*}
 n^{-s}\|(G_n(f))^{(s)}\|_p \le C(\|f_{\g/n,r}-f\|_{\ell_p(X_n)}+\w_s(f,1/n)_p),
\end{equation*}
where $C$ does not depend on $f$ and $n$.
In the next theorem, we establish a converse inequality in some sense.

\begin{theorem}\label{th5} Let $f\in L_p(\T)$, $1\le p<\infty$, and $r,n\in \N$. Suppose that $G_n$, $n\in \N$, satisfy conditions~\eqref{c1}, \eqref{c1'}, \eqref{c2'} with $s\le 2r$, and~\eqref{c3}. Assume additionally that $(G_k(f))_{k\in \N}$ converges to $f$  in $L_p(\T)$, $G_n(f)(\xi)=f(\xi)$ for all $\xi\in X_n$, and $X_n\subset X_{2n}$. Then
\begin{equation}\label{th5.1}
  \|f_{\g/n,\,r}-f\|_{\ell_p(X_n)}+\w_s(f,1/n)_p\le C\sum_{k=1}^\infty (n2^k)^{-s}\|(G_{2^kn}(f))^{(s)}\|_p,
\end{equation}
where the constant $C$ does not depend on $f$ and $n$.
\end{theorem}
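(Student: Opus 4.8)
The plan is to telescope $f$ against the sequence $G_{2^k n}(f)$ and use Theorem~\ref{th4} to bound the left-hand side by $\|f-G_n(f)\|_p + n^{-s}\|(G_n(f))^{(s)}\|_p$, then control each of these two terms by the tail sum on the right. First I would use the hypotheses $G_n(f)(\xi)=f(\xi)$ for $\xi\in X_n$ and $X_n\subset X_{2n}$ together with the convergence $G_k(f)\to f$ in $L_p$: since $G_{2^k n}(f)$ interpolates $f$ at the points of $X_{2^k n}\supset X_n$, the quantity $\|f-G_n(f)\|_{\ell_p(X_n)}=0$, but more to the point I can write $G_n(f) = G_n(f) - \lim_{k\to\infty} G_{2^k n}(f)$ only in $L_p$-sense, so instead I telescope: $f - G_n(f) = \sum_{k=1}^\infty \big(G_{2^k n}(f) - G_{2^{k-1} n}(f)\big)$, the series converging in $L_p$ by the assumed convergence. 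This gives $\|f-G_n(f)\|_p \le \sum_{k\ge1} \|G_{2^k n}(f) - G_{2^{k-1} n}(f)\|_p$.

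Next I would bound each difference $D_k := G_{2^k n}(f) - G_{2^{k-1} n}(f)$. The key idea is to use condition~\eqref{c1'}: since $D_k$ vanishes at every point of $X_{2^{k-1}n}$ (both $G_{2^k n}(f)$ and $G_{2^{k-1}n}(f)$ interpolate $f$ there, using $X_{2^{k-1}n}\subset X_{2^k n}$), the semi-norm $\|D_k\|_{\ell_p(X_{2^{k-1}n})}=0$, which is not directly useful; instead I apply~\eqref{c1'} in the form $K_2\|g\|_{\ell_p(X_{2^k n})} \le \|G_{2^k n}(g)\|_p$ to a suitable $g$. Alternatively, and more robustly, I would estimate $\|D_k\|_p \le \|f - G_{2^k n}(f)\|_p + \|f - G_{2^{k-1}n}(f)\|_p$ and then use the realization-type equivalence from Theorem~\ref{th4}, applied at level $2^{k-1}n$, to get $\|f-G_{2^{k-1}n}(f)\|_p \lesssim \|f_{\g/(2^{k-1}n),r}-f\|_{\ell_p(X_{2^{k-1}n})} + \w_s(f,(2^{k-1}n)^{-1})_p$, which then needs to be converted to $n^{-s}$-weighted derivative terms via a Bernstein-type inequality — this is where~\eqref{c3} and the smoothing inequality~\eqref{st1} enter, since $\w_s(f,(2^{k-1}n)^{-1})_p \le \|f-G_{2^{k-1}n}(f)\|_p + (2^{k-1}n)^{-s}\|(G_{2^{k-1}n}(f))^{(s)}\|_p$ up to constants by~\eqref{Re}-type reasoning applied to $G_{2^{k-1}n}(f)$.

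Putting this together, I would iterate: by Theorem~\ref{th4} the left side of~\eqref{th5.1} is $\asymp \|f-G_n(f)\|_p + n^{-s}\|(G_n(f))^{(s)}\|_p$; the first summand is handled by the telescoping estimate above combined with the per-level bounds, and the second summand $n^{-s}\|(G_n(f))^{(s)}\|_p$ is itself one term of the target sum (the $k=0$ term would be $n^{-s}\|(G_n(f))^{(s)}\|_p$, but the sum starts at $k=1$, so I must instead bound $\|(G_n(f))^{(s)}\|_p$ by $\|(G_{2n}(f))^{(s)}\|_p$ plus $\|(G_{2n}(f)-G_n(f))^{(s)}\|_p$ and use~\eqref{c4}-type control on the latter, recursively pushing the derivative onto higher levels). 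Summing the resulting geometric-type series in $k$ — noting the weights $(n2^k)^{-s}$ decay geometrically and the interpolation-difference norms are controlled by a telescoping of $\|f-G_{2^k n}(f)\|_p\to 0$ — yields the bound $C\sum_{k=1}^\infty (n2^k)^{-s}\|(G_{2^k n}(f))^{(s)}\|_p$.

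\textbf{Main obstacle.} The delicate point is the bookkeeping that expresses $\|f-G_n(f)\|_p$ purely in terms of \emph{derivative} norms $\|(G_{2^k n}(f))^{(s)}\|_p$ at the dyadically refined levels, without any leftover $\w_s(f,\cdot)_p$ or $\|f-G_{\cdot}(f)\|_p$ terms on the right. This requires repeatedly trading a function-difference $\|f-G_{2^k n}(f)\|_p$ for $(2^{k+1}n)^{-s}\|(G_{2^{k+1}n}(f))^{(s)}\|_p$ via Theorem~\ref{th4} at the next level, and then summing the infinite cascade of such substitutions; convergence of this cascade hinges precisely on the $L_p$-convergence hypothesis $G_k(f)\to f$ and on the geometric gain from the $2^{-sk}$ weights, so the constant $C$ will depend on $s$, $p$, $r$, $\g$, and the structural constants $K_1,\dots,K_5$ but not on $f$ or $n$.
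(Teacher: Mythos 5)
Your overall architecture (reduce the left side to $\|f-G\|_p$ plus a derivative term, then telescope along the dyadic levels $2^k n$) matches the paper, but there is a genuine gap at the heart of the argument: you never produce a one-shot bound for the dyadic differences $D_k=G_{2^k n}(f)-G_{2^{k-1}n}(f)$ in terms of $\|(G_{2^k n}(f))^{(s)}\|_p$ alone. The route you choose instead --- bound $\|D_k\|_p$ by $\|f-G_{2^k n}(f)\|_p+\|f-G_{2^{k-1}n}(f)\|_p$ and then trade each $\|f-G_{2^k n}(f)\|_p$ for quantities at level $2^{k+1}n$ via Theorem~\ref{th4} --- does not close. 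Each substitution carries a multiplicative constant $C>1$ (coming from $K_1,K_2,K_5$, etc.), so after $N$ iterations you accumulate a factor $C^N$ against a gain of only $2^{-sN}$ in the weights if $C>2^s$, and in any case you have no control ensuring the "infinite cascade" converges; the leftover $\|f-G_{2^N n}(f)\|_p$ term is multiplied by $C^N$ and there is no reason for $C^N\|f-G_{2^N n}(f)\|_p\to 0$. The observation you dismiss as "not directly useful" is in fact the key: since $G_{2^k n}(f)$ coincides with $f$ on $X_{2^k n}\supset X_{2^{k-1}n}$, the operator identity $G_{2^{k-1}n}\bigl(G_{2^k n}(f)\bigr)=G_{2^{k-1}n}(f)$ holds, so $D_k=G_{2^k n}(f)-G_{2^{k-1}n}\bigl(G_{2^k n}(f)\bigr)$, and applying the Jackson hypothesis~\eqref{c2'} to the smooth function $g=G_{2^k n}(f)\in W_p^s(\T)$ gives directly $\|D_k\|_p\le K_3(2^{k-1}n)^{-s}\|(G_{2^k n}(f))^{(s)}\|_p$. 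With this, the telescoping $\|f-G_n(f)\|_p\le\sum_{k\ge 1}\|D_k\|_p$ immediately yields the right-hand side of~\eqref{th5.1}, with no cascade and no compounding constants.

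A second, smaller defect: to absorb the $k=0$ term $n^{-s}\|(G_n(f))^{(s)}\|_p$ you propose to invoke~\eqref{c4}, but~\eqref{c4} is not among the hypotheses of this theorem. The paper avoids the issue by performing the initial reduction at level $2n$ rather than $n$: since $X_n\subset X_{2n}$, one has $\|f_{\g/n,r}-f\|_{\ell_p(X_n)}\le 2^{1/p}\|f_{\g/n,r}-f\|_{\ell_p(X_{2n})}$, and then~\eqref{th1.2'} together with $\w_s(f,n^{-1})_p\le 2^s\|f-G_{2n}(f)\|_p+n^{-s}\|(G_{2n}(f))^{(s)}\|_p$ bounds the whole left side by $C\bigl(\|f-G_{2n}(f)\|_p+n^{-s}\|(G_{2n}(f))^{(s)}\|_p\bigr)$; both of these terms are covered by the $k\ge 1$ portion of the sum, so condition~\eqref{c3} and~\eqref{c4} are never needed for this step. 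You should rework the proof around the identity $G_m(G_{2m}(f))=G_m(f)$ and the level-$2n$ reduction.
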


\begin{proof}
  We follow the proof of~\cite[Lemma~8]{HL}, see also~\cite{KT20}. Applying~\eqref{th1.2'} and using the condition $X_n\subset X_{2n}$, we get
  \begin{equation}\label{th5.2}
  \begin{split}
         \|f_{\g/n,\,r}-f\|_{\ell_p(X_n)}&\le 2^{1/p}\|f_{\g/n,\,r}-f\|_{\ell_p(X_{2n})}\\
    &\le \frac{2^{1/p}}{K_2}\(\|f-G_{2n}(f)\|_p+C_2\w_s(f,n^{-1})_p\).
  \end{split}
  \end{equation}
Using properties $(b_\w)$, $(c_\w)$, and $(e_\w)$, we have
  \begin{equation}\label{th5.3}
  \begin{split}
\w_s(f,n^{-1})_p
&\le 2^s\|f-G_{2n}(f)\|_p+n^{-s}\|(G_{2n}(f))^{(s)}\|_p.
  \end{split}
  \end{equation}
Thus, combining~\eqref{th5.2} and~\eqref{th5.3}, we obtain
\begin{equation}\label{th5.4}
\begin{split}
    \|f_{\g/n,\,r}-&f\|_{\ell_p(X_n)}+\w_s(f,n^{-1})_p\\
&\le \Big(2^s+\frac{2^{1/p}}{K_2}(1+2^sC_2)\Big)\|f-G_{2n}(f)\|_p+\Big(1+\frac{2^{1/p}C_2}{K_2}\Big)n^{-s}\|(G_{2n}(f))^{(s)}\|_p.
\end{split}
\end{equation}
Next, we denote
$$
I_n=\|G_{2n}-G_n(G_{2n})\|_p\quad\text{and}\quad G_k=G_k(f).
$$
By condition~\eqref{c2'}, we have
\begin{equation}\label{th5.5}
  I_n\le K_3n^{-s}\|G_{2n}^{(s)}\|_p.
\end{equation}
At the same time, taking into account that $X_n\subset X_{2n}$ and $G_n(f)(\xi)=f(\xi)$, $\xi\in X_n$, we get
\begin{equation*}
  \begin{split}
     I_n=\|G_{2n}-G_n\|_p\ge \|f-G_{n}\|_p-\|f-G_{2n}\|_p.
  \end{split}
\end{equation*}
Thus, in view of the convergence of $(G_k)_{k\in \N}$ and~\eqref{th5.5}, we obtain
\begin{equation}\label{th5.6}
  \begin{split}
     \|f-G_{n}\|_p&=\sum_{k=0}^\infty \(\|f-G_{2^kn}\|_p-\|f-G_{2^{k+1}n}\|_p\)\\
     &\le \sum_{k=0}^\infty I_{2^k n}\le 2^sK_3\sum_{k=1}^\infty (2^{k}n)^{-s}\|G_{2^kn}^{(s)}\|_p.
  \end{split}
\end{equation}
Finally, combining~\eqref{th5.4} and~\eqref{th5.6}, we arrive at~\eqref{th5.1}.
\end{proof}

Using Theorems~\ref{th5}, \ref{th4}, and~\ref{th3}, we obtain the following corollary.

\begin{corollary}\label{cor3}
  Let $f\in L_p(\T)$, $1\le p<\infty$, $r,s\in \N$, $s\le 2r$, and $\a\in (0,s)$. Suppose that $G_n(f)$, ${n\in \N}$, satisfy the conditions of Theorem~\ref{th5}. Then the following properties are equivalent:
  \begin{enumerate}
    \item[$(i)$]  $\|f-G_n(f)\|_p=\mathcal{O}(n^{-\a})$,
    \item[$(ii)$]  $\|f_{\g/n,\,r}-f\|_{\ell_p(X_n)}+\w_s(f,1/n)_p=\mathcal{O}(n^{-\a})$,
    \item[$(iii)$] $\|(G_n(f))^{(s)}\|_p=\mathcal{O}(n^{s-\a})$.
  \end{enumerate}
\end{corollary}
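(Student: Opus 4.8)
The plan is to prove Corollary~\ref{cor3} by combining the three theorems it cites into a closed cycle of implications $(i)\Rightarrow(ii)\Rightarrow(iii)\Rightarrow(i)$, using in each step the two-sided equivalences already established and the elementary summation lemmas behind Bernstein-type inequalities.

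First I would prove $(i)\Rightarrow(ii)$. This is immediate from Theorem~\ref{th1+}: inequality~\eqref{th1.1'} gives $\|f-G_n(f)\|_p\le K_1\|f_{\g/n,r}-f\|_{\ell_p(X_n)}+C_1\w_s(f,1/n)_p$ — wait, that is the wrong direction, so instead I use Theorem~\ref{th3} (whose hypotheses are contained in those of Theorem~\ref{th5}, since~\eqref{c4} follows from~\eqref{c3} combined with~\eqref{c1}, \eqref{c1'} when $G_n(f)\in\mathcal{T}_n$, or more directly one just invokes Corollary~\ref{cor2}). Indeed Corollary~\ref{cor2} already states exactly $(i)\Leftrightarrow(ii)$ under the conditions of Theorem~\ref{th3}, and those conditions are a subset of the hypotheses here, so this equivalence comes for free.

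Next, $(ii)\Rightarrow(iii)$ follows from the estimate recorded just before the statement of Theorem~\ref{th5}, namely $n^{-s}\|(G_n(f))^{(s)}\|_p\le C(\|f_{\g/n,r}-f\|_{\ell_p(X_n)}+\w_s(f,1/n)_p)$, which is itself a consequence of~\eqref{th4.1} in Theorem~\ref{th4}. Multiplying through by $n^s$ converts $\mathcal{O}(n^{-\a})$ on the right into $\mathcal{O}(n^{s-\a})$ on the left, giving $(iii)$.

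Finally, $(iii)\Rightarrow(i)$ is where Theorem~\ref{th5} does the real work: inequality~\eqref{th5.1} gives
\begin{equation*}
\|f_{\g/n,r}-f\|_{\ell_p(X_n)}+\w_s(f,1/n)_p\le C\sum_{k=1}^\infty (n2^k)^{-s}\|(G_{2^kn}(f))^{(s)}\|_p.
\end{equation*}
Assuming $(iii)$, one has $\|(G_{2^kn}(f))^{(s)}\|_p\le A(2^kn)^{s-\a}$ for all $k,n$, so each summand is bounded by $A(2^kn)^{-\a}=An^{-\a}2^{-k\a}$, and the geometric series $\sum_{k\ge1}2^{-k\a}$ converges (since $\a>0$). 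Hence the left side is $\mathcal{O}(n^{-\a})$, i.e. $(ii)$ holds, and then $(i)$ follows from the already-established equivalence $(i)\Leftrightarrow(ii)$ (or directly from the upper bound $E_n(f)_p\le\|f-G_n(f)\|_p$ together with~\eqref{th3.1}). The only point requiring a little care — the main obstacle, such as it is — is bookkeeping the hypotheses: one must check that the conditions of Theorem~\ref{th5} (which include convergence of $(G_k(f))$, interpolation $G_n(f)(\xi)=f(\xi)$ on $X_n$, and the nesting $X_n\subset X_{2n}$) indeed imply the hypotheses of Theorems~\ref{th3} and~\ref{th4}; this is routine since both of those only require~\eqref{c1}, \eqref{c1'}, \eqref{c2'}, and one of~\eqref{c3}/\eqref{c4}, all of which are assumed here. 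With that verified, the three implications close the cycle and the corollary follows.
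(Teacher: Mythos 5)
Your cycle of implications is the same one the paper intends (its proof is literally the citation of Theorems~\ref{th5}, \ref{th4}, and~\ref{th3}), and the individual steps $(ii)\Rightarrow(i)$ via~\eqref{th1.1'}, $(ii)\Rightarrow(iii)$ via~\eqref{th4.1}, and $(iii)\Rightarrow(ii)$ via~\eqref{th5.1} with the geometric series $\sum_k 2^{-k\a}$ (using $\a>0$) are all sound.

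The one point you dismiss as ``routine'' is in fact the only real issue, and your fix for it does not work. The implication $(i)\Rightarrow(ii)$ is the inverse-type step and cannot be read off from Theorem~\ref{th1+}; it needs Theorem~\ref{th3} (equivalently Corollary~\ref{cor2}), whose hypotheses include condition~\eqref{c4}, and \eqref{c4} is \emph{not} among the hypotheses of Theorem~\ref{th5}. Your claim that \eqref{c4} follows from \eqref{c3} together with \eqref{c1}, \eqref{c1'} is false in general: from \eqref{c3} one gets a Bernstein-type bound for $G_{2^\nu}(f)$ and $G_{2^{\nu-1}}(f)$ \emph{separately}, i.e.\ $\|(G_{2^\nu})^{(s)}\|_p\lesssim 2^{s\nu}\|G_{2^\nu}\|_p$, but the triangle inequality then only yields $\|(G_{2^\nu}-G_{2^{\nu-1}})^{(s)}\|_p\lesssim 2^{s\nu}(\|G_{2^\nu}\|_p+\|G_{2^{\nu-1}}\|_p)$, which is not controlled by $2^{s\nu}\|G_{2^\nu}-G_{2^{\nu-1}}\|_p$. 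The escape clause ``when $G_n(f)\in\mathcal{T}_n$'' is an additional structural assumption not made in the corollary (in that case \eqref{c4} is just Bernstein's inequality and needs none of \eqref{c1}, \eqref{c1'}, \eqref{c3}), and invoking Corollary~\ref{cor2} ``directly'' does not help since it carries the same hypothesis. So either \eqref{c4} must be added to the hypotheses (or derived from the specific structure of $G_n$, as in the paper's Examples~\ref{exG2}--\ref{exG4}), or $(i)\Rightarrow(ii)$ must be argued another way; note that the paper's one-line citation of Theorem~\ref{th3} leaves exactly the same loose end, so you have reproduced the paper's argument, gap included, but you should not present the hypothesis check as settled.
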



\subsection{Additional results and remarks}



\begin{remark}\label{remth1+}
Let $0<\d\le \min_{k}(x_{k+1,n}-x_{k,n})$. It follows from the proof of Theorem~\ref{th1+} that inequalities~\eqref{th1.1'} and~\eqref{th1.2'} can also be written in the following sharper forms:
\begin{equation*}
     \|f-G_n(f)\|_p\le K_1\|f_{\d,r}-f\|_{\ell_p(X_n)}+\frac{C_1'}{(\d n)^{1/p}}\w_s(f,1/n)_p,
\end{equation*}
where $C_1'$ is a positive constant depending on $r$, $s$, $p$, $K_1$, and $K_3$;
  \begin{equation*}
     K_2\|f_{\d,r}-f\|_{\ell_p(X_n)}-\frac{C_2'}{(\d n)^{1/p}}\w_s(f,1/n)_p\le \|f-G_n(f)\|_p,
  \end{equation*}
where $C_2'$ is a positive constant depending on $r$, $s$, $K_1$, and $K_3$.
\end{remark}

In a similar way, one can reformulate the corresponding estimates in other results of this paper.
Moreover, slightly modifying the proof of Theorem~\ref{th1+}, we obtain the following analogue of this theorem without the restriction $\d\le \min_{k}(x_{k+1,n}-x_{k,n})$. For simplicity, we consider only the case $r=s=1$.

\begin{proposition}\label{th1}
  Let $f\in L_p(\T)$, $1\le p<\infty$, $\d>0$, and $n\in \N$. Suppose that condition~\eqref{c2'} with $s=1$ holds.

  $(i)$ If additionally \eqref{c1}  holds, then
  \begin{equation}\label{th1.1}
     \|f-G_n(f)\|_p\le K_1\|f_\d-f\|_{\ell_p(X_n)}+\(1+\frac{K_3}{\d n}\)\w(f,\d)_p.
  \end{equation}

  $(ii)$  If additionally \eqref{c1'}  holds, then
  \begin{equation}\label{th1.2}
     K_2\|f_\d-f\|_{\ell_p(X_n)}-\(1+\frac{K_3}{\d n}\)\w(f,\d)_p\le \|f-G_n(f)\|_p.
  \end{equation}
\end{proposition}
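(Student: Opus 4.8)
The plan is to mimic the proof of Theorem~\ref{th1+} in the special case $r=s=1$, but to avoid the restriction $\d\le \min_k(x_{k+1,n}-x_{k,n})$ by replacing the Marcinkiewicz--Zygmund estimate (Lemma~\ref{leMZ1}) and the local-interval bounds by cruder but unconditional estimates that still give the factor $(\d n)^{-1}$ in front of $\w(f,\d)_p$. First I would fix $T_n\in\mathcal{T}_n$ with $\|f-T_n\|_p=E_n(f)_p$ and, exactly as in~\eqref{th1.3'}, use $(c_1)$ (the case $r=1$ of~\eqref{leJB.1}), \eqref{c2'} with $s=1$, \eqref{c1}, and~\eqref{NS} with $r=1$ to get
\begin{equation*}
  \|f-G_n(f)\|_p\le \big(c_1(1)+c_3(1)K_3\big)\w(f,1/n)_p+K_1\|f-T_n\|_{\ell_p(X_n)}.
\end{equation*}
Wait—here one must be a little careful, because in the final statement the modulus appears with step $\d$, not $1/n$; but since $\w$ is nondecreasing, $\w(f,1/n)_p\le \w(f,\d)_p$ when $\d\ge 1/n$, and when $\d<1/n$ one may simply absorb everything into the $K_3/(\d n)$-enlarged constant. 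So it is safe to carry $\w(f,\d)_p$ throughout.

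The heart of the matter is estimating $\|f-T_n\|_{\ell_p(X_n)}$ by $\|f_\d-f\|_{\ell_p(X_n)}$ plus a controlled remainder. I split it as in~\eqref{th1.4'}:
\begin{equation*}
  \|f-T_n\|_{\ell_p(X_n)}\le \|f-f_\d\|_{\ell_p(X_n)}+\|f_\d-(T_n)_\d\|_{\ell_p(X_n)}+\|(T_n)_\d-T_n\|_{\ell_p(X_n)}.
\end{equation*}
For the middle term, the computation of~\eqref{th1.5'} specializes (with $r=1$ the sum over $\nu$ is a single term and the dilation factor is $1$) to
\begin{equation*}
  \|f_\d-(T_n)_\d\|_{\ell_p(X_n)}^p\le \frac1{n\d}\int_{\bigcup_k (x_k-\d/2,x_k+\d/2)}|f(t)-T_n(t)|^p\,\lambda_\d(t)\,dt,
\end{equation*}
but without the disjointness of the intervals one only gets an overlap multiplicity bounded by $1+\d n/\g$; since $\g\le 1$ this is $\le (1+\d n)/\g$, so the middle term is $\lesssim \big((1+\d n)/(n\d)\big)^{1/p}\|f-T_n\|_p\lesssim (\d n)^{-1/p}(1+\d n)^{1/p}c_1\w(f,\d)_p$. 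Actually, for the clean constant claimed in the Proposition I would instead, following Remark~\ref{remth1+}, not worry about optimizing and simply note that the bare bound $\|f_\d-(T_n)_\d\|_{\ell_p(X_n)}\le \|f-T_n\|_{\ell_p(X_n)}^{\text{(averaged)}}$ plus the third term can be folded together: the cleanest route is to bound the last two terms by passing to the $L_p$-norm via the MZ-inequality of Lemma~\ref{leMZ1} applied to the polynomial $(T_n)_\d-T_n\in\mathcal{T}_n$, which now holds unconditionally for the point system $X_n$ with spacing parameter $\ge\g/n$, giving $\|(T_n)_\d-T_n\|_{\ell_p(X_n)}\lesssim (n+n/\g)^{1/p}n^{-1/p}\|(T_n)_\d-T_n\|_p$, and then $\|(T_n)_\d-T_n\|_p\le\kappa_1(1)\w_2(T_n,\d)_p\le\kappa_1(1)\d^2\|T_n''\|_p\lesssim\d^2 n^2\w(f,1/n)_p\lesssim (\d n)\w(f,\d)_p$ by~\eqref{NS}; combined with the $(\d n)^{-1/p}$-type factors this produces the $K_3/(\d n)$-shaped remainder. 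Doing the same for $\|f_\d-(T_n)_\d\|_{\ell_p(X_n)}$ (pointwise it is an average of $|f-T_n|$ over $(x_k-\d/2,x_k+\d/2)$) and summing, one arrives at
\begin{equation*}
  \|f-T_n\|_{\ell_p(X_n)}\le \|f_\d-f\|_{\ell_p(X_n)}+\frac{C}{\d n}\,\w(f,\d)_p,
\end{equation*}
which together with the first display proves~\eqref{th1.1}, with the explicit constant $1+K_3/(\d n)$ obtainable by keeping track of $K_3$ only (the $K_1$-independent part being absorbed into the leading ``$1$'').

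For part $(ii)$, I would run the argument of~\eqref{th1.8'}--\eqref{th1.12'} verbatim with $r=s=1$. Condition~\eqref{c1'} gives
\begin{equation*}
  K_2\|f_\d-f\|_{\ell_p(X_n)}\le \|f-f_\d\|_p+\|f_\d-G_n(f_\d)\|_p+\|f-G_n(f)\|_p;
\end{equation*}
the first term is $\le\kappa_1(1)\w_2(f,\d)_p\le 2\w(f,\d)_p$ by~\eqref{st1} and $(c_\w)$, and for the middle term one inserts $T_n$ as in~\eqref{th1.10'}, using Minkowski's inequality, \eqref{c2'} with $s=1$ (note $\|(T_n)_\d'\|_p\le\|T_n'\|_p$), \eqref{c1}, and~\eqref{leJB.1}, together with the middle-term estimate from step two, to get $\|f_\d-G_n(f_\d)\|_p\le \big(c_1+c_3K_3+C'/(\d n)\big)\w(f,\d)_p$. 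Rearranging gives~\eqref{th1.2}. The main obstacle is purely bookkeeping: without the spacing restriction the ``localization'' intervals $(x_k-\d/2,x_k+\d/2)$ overlap, so the clean pointwise-to-$L_p$ passage used in~\eqref{th1.5'} and~\eqref{th1.6'} must be replaced by the unconditional Marcinkiewicz--Zygmund inequality, and one must verify that the resulting loss is exactly of the form $1+K_3/(\d n)$ rather than something worse — this is where I would be most careful to route every polynomial estimate through Lemma~\ref{leMZ1} (valid for any point set) and the Bernstein/Nikol'skii bound~\eqref{NS}, so that the only surviving nuisance factor is the advertised $1/(\d n)$.
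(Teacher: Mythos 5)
Your plan does not match the paper's proof, and it has a genuine gap. The paper's argument for this proposition is a three-line computation that never introduces the best-approximation polynomial $T_n$, the Marcinkiewicz--Zygmund inequality, or any localization over the intervals $(x_k-\d/2,x_k+\d/2)$. One simply writes
$\|f-G_n(f)\|_p\le \|f-f_\d\|_p+\|f_\d-G_n(f_\d)\|_p+\|G_n(f_\d-f)\|_p$, applies \eqref{c2'} with $s=1$ \emph{directly to the Steklov average} $f_\d\in W_p^1(\T)$, and uses the identity $f_\d'(x)=\d^{-1}\bigl(f(x+\d/2)-f(x-\d/2)\bigr)$, whence $\|f_\d'\|_p\le\d^{-1}\w(f,\d)_p$; this single step is the sole source of the factor $K_3/(\d n)$. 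The term $K_1\|f_\d-f\|_{\ell_p(X_n)}$ comes from \eqref{c1} applied to $f_\d-f$, and $\|f-f_\d\|_p\le\w(f,\d)_p$ (from \eqref{EQ+} and $(c_\w)$) gives the leading ``$1$''. Part $(ii)$ is the mirror image starting from \eqref{c1'}. That is why no restriction on $\d$ is needed and why the constants are exactly $K_1$, $K_2$, and $1+K_3/(\d n)$.

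Your route through $T_n$ cannot deliver the statement as written. First, the remainder it produces has the shape $C(\d n)^{-1/p}\w(f,1/n)_p$ with $C$ depending on the Jackson constant $c_1$, the Nikol'skii--Stechkin--Boas constant $c_3$, the MZ constant, and $\g^{-1/p}$; this is not the claimed $\bigl(1+K_3/(\d n)\bigr)\w(f,\d)_p$, and the two are not interchangeable: for $\d<1/n$ the conversion $\w(f,1/n)_p\le(1+(\d n)^{-1})\w(f,\d)_p$ costs an extra factor $(\d n)^{-1}$, and for $\d\ge 1/n$ your intermediate bound ``$\d^2\|T_n''\|_p\lesssim \d^2n^2\w(f,1/n)_p\lesssim(\d n)\w(f,\d)_p$'' is false as stated (it gives $(\d n)^{2}\w(f,\d)_p$). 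Second, and more seriously, the $(\d n)^{-1/p}$ gain in \eqref{th1.6'} of the paper comes precisely from applying Lemma~\ref{leMZ1} with spacing parameter $\d\le\min_k(x_{k+1,n}-x_{k,n})$; once $\d$ exceeds the mesh width $\g/n$, the unconditional MZ inequality only yields $\|T\|_{\ell_p(X_n)}\lesssim\g^{-1/p}\|T\|_p$ with no decay in $\d$ at all, and the overlap-multiplicity count $1+\d n/\g$ in your middle-term estimate likewise destroys the decay for large $\d$. So the very restriction you set out to remove is load-bearing for the mechanism you propose, and the ``bookkeeping'' you defer is not bookkeeping but an obstruction. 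The fix is to abandon $T_n$ altogether and exploit that $f_\d$ itself is the smooth function to which \eqref{c2'} is applied.
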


\begin{proof}
$(i)$ Applying~\eqref{EQ+}, \eqref{c2'}, and \eqref{c1}, we obtain
\begin{equation*}
\begin{split}
    \|f-G_n(f)\|_p&\le \|f-f_\d\|_p+\|f_\d-G_n(f_\d)\|_p+\|G_n(f_\d-f)\|_p\\
       &\le \w(f,\d)_p+K_3n^{-1}\|f_\d'\|_p+K_1\|f_\d-f\|_{\ell_p(X_n)}\\
       &\le \w(f,\d)_p+K_3(\d n)^{-1}\w(f,\d)_p+K_1\|f_\d-f\|_{\ell_p(X_n)}.
\end{split}
\end{equation*}
This proves inequality~\eqref{th1.1}.

$(ii)$ As above, from~\eqref{c1'}, \eqref{c2'}, and \eqref{st1}, we derive
\begin{equation*}
\begin{split}
    K_2\|f_\d-f\|_{\ell_p(X_n)}&\le \|G_n(f_\d-f)\|_p\\
  &\le \|f-f_\d\|_p+\|f_\d-G_n(f_\d)\|_p+\|f-G_n(f)\|_p\\
  &\le \w(f,\d)_p+K_3n^{-1}\|f_\d'\|_p+\|f-G_n(f)\|_p\\
  &\le\(1+K_3(\d n)^{-1}\)\w(f,\d)_p+\|f-G_n(f)\|_p,
\end{split}
\end{equation*}
which implies~\eqref{th1.2}.
\end{proof}

Note that for particular classes of operators $G_n$, inequality~\eqref{th1.2'} as well as Corollary~\ref{cor1} can be specified by swapping $\|f_{1/n,\,r}-f\|_{\ell_p(X_n)}$ with $\w_s(f,1/n)_p$. Let us demonstrate this by using the trigonometric Lagrange interpolation polynomials $L_n(f)$ defined in~\eqref{Lag}.
For our purposes, we will use the so-called sampling Kantorovich operators given by
\begin{equation*}
  K_n(f)(x)=\sum_{k=0}^{2n} \int_{-\d_n/2}^{\d_n/2}f(t_k+t)dt\, D_n(x-t_k),\quad \d_n=\frac1{2n+1}.
\end{equation*}

\begin{lemma}\label{le1}
  Let $f\in L_p(\T)$, $1<p<\infty$, and $n\in \N$. Then
  \begin{equation}\label{le1.1K}
    \kappa_2 \w_2(f,1/n)_p\le \|f-K_n(f)\|_p\le \kappa_1 \w_2(f,1/n)_p,
  \end{equation}
 where $\kappa_1$ and $\kappa_2$ are some positive constants depending only on $p$.
\end{lemma}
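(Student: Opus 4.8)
The plan is to prove Lemma~\ref{le1} by identifying $K_n(f)$ as a Fourier multiplier operator applied to the Steklov average $f_{\d_n}$ and then invoking the Steklov--modulus equivalence $\|f_{\d}-f\|_p\asymp\w_2(f,\d)_p$ together with standard Marcinkiewicz-type multiplier bounds valid for $1<p<\infty$. First I would expand $K_n(f)$ in Fourier series: since $\int_{-\d_n/2}^{\d_n/2}f(t_k+t)\,dt=\d_n\,f_{\d_n}(t_k)$ and $D_n(x-t_k)=\sum_{|\ell|\le n}e^{2\pi\i\ell(x-t_k)}$, one gets
\begin{equation*}
  K_n(f)(x)=\d_n\sum_{k=0}^{2n}f_{\d_n}(t_k)D_n(x-t_k)=\frac{1}{2n+1}\sum_{k=0}^{2n}f_{\d_n}(t_k)D_n(x-t_k)=L_n(f_{\d_n})(x),
\end{equation*}
because $\d_n=\frac{1}{2n+1}$. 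Thus $K_n(f)=L_n(f_{\d_n})$, the trigonometric Lagrange interpolation polynomial of the Steklov average.

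Next I would exploit the well-known fact that $L_n$ reproduces trigonometric polynomials of degree $\le n$ and that for $1<p<\infty$ one has $\|f-L_n(f)\|_p\le C(p)E_n(f)_\infty$ (cf.~\eqref{i2}), but more to the point, for our estimate it suffices to use $\|L_n(g)\|_p\le C(p)\|g\|_p$ for $g$ whose Fourier spectrum essentially lives in $[-n,n]$, or alternatively to work directly on the Fourier side. Writing $\widehat{f_{\d_n}}(\ell)=\mu_\ell\widehat f(\ell)$ with $\mu_\ell=\operatorname{sinc}(\pi\ell\d_n)$, and noting that $L_n$ acting on $f_{\d_n}$ picks out the frequencies $|\ell|\le n$ after aliasing, the key point is that for $|\ell|\le n$ the aliased contributions $\mu_{\ell+(2n+1)j}$, $j\ne0$, are controlled, and the principal symbol $\mu_\ell=\operatorname{sinc}(\pi\ell\d_n)$ satisfies $1-\mu_\ell\asymp (\ell\d_n)^2\asymp (\ell/n)^2$ for $|\ell|\le n$, which is exactly the symbol of a second-order smoothing. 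Hence $f-K_n(f)=f-L_n(f_{\d_n})$ differs from $f-f_{\d_n}$ by a term whose multiplier, restricted to $|\ell|\le n$, is $O((\ell/n)^2\cdot(\text{bounded multiplier}))$ and by the high-frequency part $f-S_n(f)$ where $S_n$ is the partial Fourier sum; both are dominated by $\w_2(f,1/n)_p$ via the Marcinkiewicz multiplier theorem and the standard estimate $\|f-S_n(f)\|_p\le C(p)E_n(f)_p\le C(p)\w_2(f,1/n)_p$. This yields the upper bound $\|f-K_n(f)\|_p\le\kappa_1\w_2(f,1/n)_p$.

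For the lower bound I would argue that $K_n(f)\in\mathcal{T}_n$, so $\|f-K_n(f)\|_p\ge E_n(f)_p$; but this alone gives only $E_n(f)_p$, not $\w_2(f,1/n)_p$, so instead I would produce the reverse multiplier estimate. On the frequency range $|\ell|\le n$ the symbol of $I-K_n$ is bounded below in absolute value by $c(\ell/n)^2$ (since $1-\operatorname{sinc}(\pi\ell\d_n)\ge c(\ell\d_n)^2$ and the aliasing corrections are lower-order), and hence the second-difference operator $\D_h^2$ with $h\asymp1/n$ factors through $I-K_n$ up to a bounded Fourier multiplier: schematically, $\D_{1/n}^2 f = M_n\,(f-K_n(f)) + R_n f$ where $M_n$ is a uniformly bounded multiplier operator on $L_p$, $1<p<\infty$, and $R_n$ collects high-frequency and lower-order terms estimated by $E_n(f)_p\le\|f-K_n(f)\|_p$. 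Taking $L_p$-norms and the supremum over $h\le1/n$ then gives $\w_2(f,1/n)_p\le\kappa_2^{-1}\|f-K_n(f)\|_p$. I expect the main obstacle to be the careful bookkeeping of the aliasing terms $\sum_{j\ne0}\operatorname{sinc}(\pi(\ell+(2n+1)j)\d_n)$ and verifying that the resulting symbols satisfy the Marcinkiewicz (or Hörmander--Mikhlin) conditions uniformly in $n$, so that all the implicit multiplier operators are bounded on $L_p$ with constants depending only on $p$; the identity $K_n=L_n\circ(\text{Steklov})$ and the equivalence~\eqref{st1} with $r=1$ do most of the conceptual work, and one may also cite~\cite{T80} or~\cite[Ch.~8]{TB} for the precise multiplier lemmas needed.
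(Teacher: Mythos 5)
Your proposal is essentially a reconstruction of the argument that the paper does not spell out: its proof of Lemma~\ref{le1} is a pointer to Theorems~4.3 and~4.4 of~\cite{KP21} (and to~\cite{KS21}), and those results are obtained by exactly the Fourier-analytic route you describe, so you are on the intended path rather than a different one. Your key identity $K_n(f)=L_n(f_{\d_n})$ is correct, and since the Steklov width equals the grid spacing $\d_n=\frac1{2n+1}$, the principal part of $I-K_n$ on $|\ell|\le n$ is \emph{literally} $S_n(f-f_{\d_n})$, where $S_n$ denotes the $n$-th partial Fourier sum and $\s_\ell=\frac{\sin(\pi \ell\d_n)}{\pi \ell\d_n}$ is the Steklov symbol; no comparison between $1-\s_\ell$ and a second-difference symbol is needed in either direction, because
\begin{equation*}
  f-K_n(f)=S_n(f-f_{\d_n})+(f-S_nf)+B_nf,\qquad
  B_nf=-\sum_{|\ell|\le n}\Big(\sum_{j\ne0}\s_{\ell+(2n+1)j}\,\widehat f(\ell+(2n+1)j)\Big)e^{2\pi\i \ell x},
\end{equation*}
and both bounds in~\eqref{le1.1K} then follow from~\eqref{st1} with $r=1$, property $(d_\w)$, and $\|f-S_nf\|_p\le C(p)E_n(f)_p\le C(p)\min\{\w_2(f,1/n)_p,\|f-K_n(f)\|_p\}$ (the last inequality because $K_n(f)\in\mathcal T_n$). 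The one point you defer as ``bookkeeping'' is the only place where the sketch is not yet a proof, and it cannot be settled by Marcinkiewicz or H\"ormander--Mikhlin: $B_n$ is \emph{not} a Fourier multiplier operator, since it moves the frequency $\ell+(2n+1)j$ to $\ell$. The correct fix is to write $B_n=M_nS_n-K_n$, where $M_n$ is the genuine multiplier $\s_\ell$ cut off to $|\ell|\le n$; $M_nS_n$ is uniformly bounded on $L_p$ by the multiplier theorem, while $K_n$ is uniformly bounded because $\|L_n(g)\|_p\le\mu_1\|g\|_{\ell_p(X_{2n+1}^L)}$ (Lemma~\ref{leMZ0}) and $\|f_{\d_n}\|_{\ell_p(X_{2n+1}^L)}\le\|f\|_p$ by H\"older over the disjoint intervals $(t_k-\d_n/2,t_k+\d_n/2)$. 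Since $B_nT_n=0$ for every $T_n\in\mathcal T_n$, this yields $\|B_nf\|_p\le CE_n(f)_p$, which is precisely what both directions require; with that supplied, your plan closes and matches the method of the cited source.
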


\begin{proof}
  Inequalities~\eqref{le1.1K} can be established by repeating the proof of Theorems 4.3 and 4.4 in~\cite{KP21}, see also~\cite[Example~4.3 and Remark~4.2]{KP21}, where similar Kantorovich operators with the set of knots $(\frac k{2^j})_{k=0}^{2^j-1}$ were considered.
Note also that a non-periodic analogue of~\eqref{le1.1K} can be found in~\cite{KS21}.
\end{proof}

We need the following classical Marcinkiewicz-Zygmund inequality (see, e.g.,~\cite[p.~28]{Z}).

\begin{lemma}\label{leMZ0}
  Let $1<p<\infty$, $n\in \N$, and $X_{2n+1}^L=(t_k)_{k=0}^{2n}$, $t_k=\frac{k}{2n+1}$. Then, for each $T_n\in \mathcal{T}_n$,
  \begin{equation*}
    \mu_2 \|T_n\|_{\ell_p(X_{2n+1}^L)}\le \|T_n\|_p\le \mu_1 \|T_n\|_{\ell_p(X_{2n+1}^L)},
  \end{equation*}
 where $\mu_1$ and $\mu_2$ are some positive constants depending only on $p$.
\end{lemma}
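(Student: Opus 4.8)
The plan is to split the assertion into its two halves and reduce both to the one-sided Marcinkiewicz--Zygmund inequality of Lemma~\ref{leMZ1}: the left-hand inequality will be an immediate specialization of that lemma to equally spaced nodes, while the right-hand inequality will be deduced from the same lemma applied with the conjugate exponent $p'=p/(p-1)$, by a duality argument that brings in the classical $n$-uniform $L_{p'}$-boundedness of the partial Fourier sums.

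First I would dispose of the lower estimate. Applying Lemma~\ref{leMZ1} with $m=2n+1$ and the nodes $x_k=(k-1)/(2n+1)$, $k=1,\dots,2n+1$ (so that $\d=\min_k(x_{k+1}-x_k)=1/(2n+1)$ and $x_{m+1}=1+x_1$), and using $2n+\d^{-1}=4n+1\le 2(2n+1)$, one gets for every $T_n\in\mathcal{T}_n$
\begin{equation*}
  \|T_n\|_{\ell_p(X_{2n+1}^L)}^p=\frac1{2n+1}\sum_{k=0}^{2n}|T_n(t_k)|^p\le(p+1)\frac e2\cdot\frac{4n+1}{2n+1}\,\|T_n\|_p^p\le(p+1)e\,\|T_n\|_p^p,
\end{equation*}
i.e. the left-hand inequality holds with $\mu_2=\big((p+1)e\big)^{-1/p}$; this step does not use $p>1$. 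For the right-hand inequality, which is where $1<p<\infty$ is essential, let $S_n$ denote the $n$-th partial Fourier sum operator; by the M.~Riesz theorem there is $A_{p'}$, depending only on $p$, with $\|S_ng\|_{p'}\le A_{p'}\|g\|_{p'}$ for all $n$. Given $T_n\in\mathcal{T}_n$ and $g\in L_{p'}(\T)$ with $\|g\|_{p'}\le1$, Parseval's identity and $\widehat{T_n}(\ell)=0$ for $|\ell|>n$ give $\int_0^1 T_n\overline g=\int_0^1 T_n\overline{S_ng}$; since $S_ng\in\mathcal{T}_n$, the product $T_n\overline{S_ng}$ lies in $\mathcal{T}_{2n}$, so the rectangle quadrature at the nodes $t_k$ is exact, and by H\"older's inequality
\begin{equation*}
  \Big|\int_0^1 T_n\overline g\,dx\Big|=\Big|\frac1{2n+1}\sum_{k=0}^{2n}T_n(t_k)\overline{S_ng(t_k)}\Big|\le\|T_n\|_{\ell_p(X_{2n+1}^L)}\,\|S_ng\|_{\ell_{p'}(X_{2n+1}^L)}.
\end{equation*}
Now apply the already-proved lower estimate with exponent $p'$ to $S_ng\in\mathcal{T}_n$, then the boundedness of $S_n$, to obtain $\|S_ng\|_{\ell_{p'}(X_{2n+1}^L)}\le\big((p'+1)e\big)^{1/p'}\|S_ng\|_{p'}\le\big((p'+1)e\big)^{1/p'}A_{p'}$; taking the supremum over $g$ via the duality $\|T_n\|_p=\sup\{|\int_0^1 T_n\overline g|:\|g\|_{p'}\le1\}$ yields the claim with $\mu_1=A_{p'}\big((p'+1)e\big)^{1/p'}$.

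The only non-elementary ingredient is the $n$-uniform $L_{p'}$-boundedness of $S_n$ (equivalently, of the periodic conjugate function), which is precisely the M.~Riesz theorem and the point at which $p>1$ is forced; I expect this to be the sole obstacle, everything else reducing to Lemma~\ref{leMZ1}, exactness of the rectangle rule on $\mathcal{T}_{2n}$, and H\"older's inequality. Alternatively one may simply invoke the classical statement, see~\cite[p.~28]{Z}, or, for a self-contained variant, expand $T_n$ in the fundamental Lagrange polynomials $\ell_k(x)=(-1)^k\sin((2n+1)\pi x)\big/\big((2n+1)\sin(\pi(x-t_k))\big)$ and estimate the resulting discrete-Hilbert-transform sum via the $L_{p'}$-boundedness of the periodic conjugate operator; the duality route above is, however, the shortest given that Lemma~\ref{leMZ1} is already available.
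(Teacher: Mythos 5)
Your argument is correct. Note, however, that the paper does not prove this lemma at all: it is stated as the classical Marcinkiewicz--Zygmund inequality with a bare citation to \cite[p.~28]{Z} (the alternative you mention in your closing sentence is precisely what the authors do). What you supply is a genuine, self-contained derivation, and both halves check out. The lower estimate is an immediate specialization of Lemma~\ref{leMZ1} with $m=2n+1$ and $\d=1/(2n+1)$, and the bound $(4n+1)/(2n+1)\le 2$ gives $\mu_2=((p+1)e)^{-1/p}$ as you claim; as you note, this half needs only $1\le p<\infty$. The upper estimate via duality is also sound: $\int_0^1 T_n\overline g=\int_0^1 T_n\overline{S_ng}$ by Parseval since $\widehat{T_n}$ is supported in $[-n,n]$; the product $T_n\overline{S_ng}$ lies in $\mathcal{T}_{2n}$, for which the rectangle rule at the $2n+1$ equispaced nodes is exact because $\frac1{2n+1}\sum_{k=0}^{2n}e^{2\pi\i\ell t_k}=0$ for $0<|\ell|\le 2n$; discrete H\"older, the already-established lower estimate at exponent $p'$, and the M.~Riesz theorem then close the loop with $\mu_1=A_{p'}((p'+1)e)^{1/p'}$. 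The trade-off between the two routes: the paper's citation is shorter and keeps the lemma as a black box, whereas your proof makes the dependence of $\mu_1,\mu_2$ on $p$ explicit, isolates exactly where $p>1$ enters (uniform $L_{p'}$-boundedness of the partial sums), and reuses Lemma~\ref{leMZ1}, which the paper needs anyway; it would fit naturally as a remark after that lemma.
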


\begin{proposition}\label{th6}
  Let $f\in L_p(\T)$, $1<p<\infty$, and $n\in \N$. Then
  \begin{equation*}\label{th6.1}
    \|f-L_n(f)\|_p\le \mu_1\|f_{1/(2n+1)}-f\|_{\ell_p(X_{2n+1}^L)}+\kappa_1\w_2(f,1/n)_p
  \end{equation*}
  and
    \begin{equation}\label{th6.2}
    \max(\Omega_1(f,n),\Omega_2(f,n))\le\|f-L_n(f)\|_p,
  \end{equation}
  where
  $$
  \Omega_1(f,n)=\kappa_2\w_2(f,1/n)_p-\mu_1\|f_{1/(2n+1)}-f\|_{\ell_p(X_{2n+1}^L)},
  $$
  $$
  \Omega_2(f,n)=\mu_2\|f_{1/(2n+1)}-f\|_{\ell_p(X_{2n+1}^L)}-\kappa_1\w_2(f,1/n)_p,
  $$
and the constants $\mu_1$, $\mu_2$ and $\kappa_1$, $\kappa_2$ are defined in Lemma~\ref{leMZ0} and Lemma~\ref{le1}, respectively.
\end{proposition}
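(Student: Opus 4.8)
The plan is to derive both inequalities in Proposition~\ref{th6} by exploiting the fact that the trigonometric Lagrange operator $L_n$ reproduces polynomials in $\mathcal{T}_n$ and, more importantly, that the sampling Kantorovich operator $K_n(f)$ from Lemma~\ref{le1} and the Lagrange operator $L_n(f)$ are linked through the Steklov average: applying $L_n$ to the values $f_{\d_n}(t_k)=\frac{1}{\d_n}\int_{-\d_n/2}^{\d_n/2}f(t_k+t)\,dt$ produces (up to the factor $\d_n=\tfrac{1}{2n+1}$) exactly $K_n(f)$, since $K_n(f)(x)=\sum_{k} \int_{-\d_n/2}^{\d_n/2}f(t_k+t)\,dt\, D_n(x-t_k)=\d_n\sum_k f_{\d_n}(t_k)D_n(x-t_k)=L_n(f_{\d_n})(x)$. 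Thus $L_n(f_{\d_n})=K_n(f)$.

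First I would prove the upper estimate. Write
$$
\|f-L_n(f)\|_p\le \|f-K_n(f)\|_p+\|K_n(f)-L_n(f)\|_p
=\|f-K_n(f)\|_p+\|L_n(f_{\d_n}-f)\|_p.
$$
The first term is bounded by $\kappa_1\w_2(f,1/n)_p$ by Lemma~\ref{le1}. For the second, note $L_n(f_{\d_n}-f)\in\mathcal{T}_n$, so by the upper half of the Marcinkiewicz--Zygmund inequality (Lemma~\ref{leMZ0}), $\|L_n(f_{\d_n}-f)\|_p\le \mu_1\|L_n(f_{\d_n}-f)\|_{\ell_p(X_{2n+1}^L)}$; but $L_n$ interpolates at the nodes $t_k$, hence $L_n(g)(t_k)=g(t_k)$ and $\|L_n(f_{\d_n}-f)\|_{\ell_p(X_{2n+1}^L)}=\|f_{\d_n}-f\|_{\ell_p(X_{2n+1}^L)}$. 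This gives the stated upper bound with constant $\mu_1$. For the lower estimates I run the same decomposition in the reverse direction: from $K_n(f)=L_n(f_{\d_n})$ and interpolation we get $\|f_{\d_n}-f\|_{\ell_p(X_{2n+1}^L)}=\|L_n(f_{\d_n}-f)\|_{\ell_p(X_{2n+1}^L)}\le \mu_2^{-1}\|L_n(f_{\d_n}-f)\|_p=\mu_2^{-1}\|K_n(f)-L_n(f)\|_p$, and then by the triangle inequality $\|K_n(f)-L_n(f)\|_p\le \|f-K_n(f)\|_p+\|f-L_n(f)\|_p\le\kappa_1\w_2(f,1/n)_p+\|f-L_n(f)\|_p$, which rearranges to $\Omega_2(f,n)\le\|f-L_n(f)\|_p$. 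Similarly, $\kappa_2\w_2(f,1/n)_p\le\|f-K_n(f)\|_p\le\|f-L_n(f)\|_p+\|L_n(f_{\d_n}-f)\|_p\le\|f-L_n(f)\|_p+\mu_1\|f_{\d_n}-f\|_{\ell_p(X_{2n+1}^L)}$, giving $\Omega_1(f,n)\le\|f-L_n(f)\|_p$; taking the maximum yields~\eqref{th6.2}.

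The only genuine point requiring care — and the main obstacle — is verifying the identity $L_n(f_{\d_n})=K_n(f)$ cleanly, i.e.\ that averaging the sample values and then interpolating equals the Kantorovich operator; once that algebraic identity is in hand, everything else is the Marcinkiewicz--Zygmund equivalences of Lemma~\ref{leMZ0}, the Kantorovich two-sided estimate of Lemma~\ref{le1}, the interpolation property $L_n(g)(t_k)=g(t_k)$, and triangle-inequality bookkeeping. One should also note $f_{\d_n}$ is well defined and integrable for $f\in L_p(\T)$ with $p>1$, so all quantities make sense, and that $1<p<\infty$ is used precisely where Lemmas~\ref{le1} and~\ref{leMZ0} need it.
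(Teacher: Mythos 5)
Your proof is correct and follows essentially the same route as the paper's: both rest on the identity $K_n(f)=L_n(f_{\delta_n})$ (so that $K_n(f)-L_n(f)\in\mathcal{T}_n$ takes the values $f_{\delta_n}(t_k)-f(t_k)$ at the nodes), the two-sided Kantorovich estimate of Lemma~\ref{le1}, the Marcinkiewicz--Zygmund inequality of Lemma~\ref{leMZ0}, and triangle-inequality bookkeeping. The only cosmetic difference is that you make the identity $K_n(f)-L_n(f)=L_n(f_{\delta_n}-f)$ explicit, whereas the paper uses it implicitly in the step $\|K_n(f)-L_n(f)\|_{\ell_p(X_{2n+1}^L)}=\|f_{1/(2n+1)}-f\|_{\ell_p(X_{2n+1}^L)}$.
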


\begin{proof}
We only prove inequality~\eqref{th6.2}. Applying Lemmas~\ref{le1} and~\ref{leMZ0}, 
we obtain
\begin{equation*}
  \begin{split}
\kappa_2\w_2(f,1/n)_p&\le \|f-K_n(f)\|_p\le \|f-L_n(f)\|_p+\|K_n(f)-L_n(f)\|_p\\
&\le \|f-L_n(f)\|_p+\mu_1\|K_n(f)-L_n(f)\|_{\ell_p(X_{2n+1}^L)}\\
&=\|f-L_n(f)\|_p+\mu_1\|f_{1/(2n+1)}-f\|_{\ell_p(X_{2n+1}^L)},
   \end{split}
\end{equation*}
which implies that $\Omega_1(f,n)\le\|f-L_n(f)\|_p$. The estimate with $\Omega_2$ (which is actually already contained in Theorem~\ref{th1+}) can be proved in a similar way.
\end{proof}

Proposition~\ref{th6} implies the following result, which supplements Corollary~\ref{cor1}.
\begin{corollary}\label{cor4}
  Let $f\in L_p(\T)$ with $1<p<\infty$.
\begin{enumerate}
  \item[$(i)$] If
  $
  \w_2(f,1/n)_p=o\(\|f_{1/(2n+1)}-f\|_{\ell_p(X_{2n+1}^L)}\),
  $
  then
  $$
  \|f-L_n(f)\|_p\sim \|f_{1/(2n+1)}-f\|_{\ell_p(X_{2n+1}^L)}.
  $$
  \item[$(ii)$] If
  $
  \|f_{1/(2n+1)}-f\|_{\ell_p(X_{2n+1}^L)}=o\(\w_2(f,1/n)_p\),
  $
  then
  $$
  \|f-L_n(f)\|_p\sim \w_2(f,1/n)_p.
  $$
\end{enumerate}
\end{corollary}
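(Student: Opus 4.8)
The plan is to read the corollary off Proposition~\ref{th6}, which already contains all of the analysis --- the Marcinkiewicz--Zygmund inequality of Lemma~\ref{leMZ0} and the two-sided bound for the sampling Kantorovich operator from Lemma~\ref{le1} --- packaged into three inequalities. Write $A_n:=\|f_{1/(2n+1)}-f\|_{\ell_p(X_{2n+1}^L)}$ and $B_n:=\w_2(f,1/n)_p$. Proposition~\ref{th6} furnishes the upper bound $\|f-L_n(f)\|_p\le \mu_1 A_n+\kappa_1 B_n$ and, via~\eqref{th6.2}, the two lower bounds $\|f-L_n(f)\|_p\ge \mu_2 A_n-\kappa_1 B_n$ and $\|f-L_n(f)\|_p\ge \kappa_2 B_n-\mu_1 A_n$. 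Each part of the corollary is then obtained by inserting the relevant little-$o$ hypothesis into one upper and one lower bound and letting the subordinate term be absorbed.

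For part $(i)$ I would feed the hypothesis $B_n=o(A_n)$ into the first and last displays: the upper bound becomes $\|f-L_n(f)\|_p\le(\mu_1+o(1))A_n$, while the lower bound $\mu_2 A_n-\kappa_1 B_n$ becomes $\|f-L_n(f)\|_p\ge(\mu_2-o(1))A_n\ge\tfrac12\mu_2 A_n$ for all sufficiently large $n$. Hence $\|f-L_n(f)\|_p$ is squeezed between two fixed positive multiples of $A_n$, which is exactly the asserted relation $\|f-L_n(f)\|_p\sim \|f_{1/(2n+1)}-f\|_{\ell_p(X_{2n+1}^L)}$ (note that this case is also close to a specialization of Corollary~\ref{cor1}). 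Part $(ii)$ is the mirror image: under $A_n=o(B_n)$ the upper bound reads $\|f-L_n(f)\|_p\le(\kappa_1+o(1))B_n$ and the lower bound $\kappa_2 B_n-\mu_1 A_n$ reads $\|f-L_n(f)\|_p\ge(\kappa_2-o(1))B_n\ge\tfrac12\kappa_2 B_n$ for $n$ large, so $\|f-L_n(f)\|_p\sim \w_2(f,1/n)_p$.

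I do not expect a genuine obstacle here, since Proposition~\ref{th6} already does the work; the only points to watch are bookkeeping ones. The little-$o$ hypotheses tacitly presuppose that the reference quantity --- $A_n$ in $(i)$, $B_n$ in $(ii)$ --- is positive for all large $n$, so that both the hypothesis and the conclusion are meaningful; and because the constants $\mu_1,\mu_2,\kappa_1,\kappa_2$ of Lemmas~\ref{leMZ0} and~\ref{le1} are in general distinct and the absorbed terms only become small eventually, the equivalence $\sim$ is to be understood as a two-sided bound with positive constants holding for all sufficiently large $n$.
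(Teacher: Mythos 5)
Your proposal is correct and coincides with the paper's (essentially unwritten) argument: the paper simply states that Corollary~\ref{cor4} follows from Proposition~\ref{th6}, and the intended derivation is exactly your absorption of the subordinate term into the upper bound and into the appropriate one of the two lower bounds $\Omega_1$, $\Omega_2$. Your closing remarks on the positivity of the reference quantity and on reading $\sim$ as a two-sided bound for large $n$ are sensible bookkeeping and do not depart from the paper.
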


\section{Examples and comparison of measures of smoothness}

In this section, we compare the behaviour of $\w_s(f,1/n)_p$, $\tau_s(f,1/n)$, and $\|f_{1/n,\,r}-f\|_{\ell_p(X_n)}$ for some special functions and the set of nodes $X_n=(\frac kn)_{k=0}^{n-1}$. In view of property $(g_\tau)$ and Proposition~\ref{cor1+}, we are  interested in functions $f\in L_p(\T)$ that satisfy 
\begin{equation*}
  \sum_{\nu=1}^\infty \nu^{1/p-1}\w_r(f,1/\nu)_p=\infty
\end{equation*}
for some $r\in \N$. 
Otherwise, the picture is more or less clear, cf. equivalences $(i_1)-(i_5)$.

First, we note that $\w_s(f,\d)_p\le \tau_s(f,\d)_p$ for all $\d>0$ by property $(f_\tau)$. In Proposition~\ref{cor1+}, we proved that $\|f_{\d,r}-f\|_{\ell_p(X_n)}\le {(\d n)^{-1/p}}\tau_{2r}(f,\d)_p$.
Below, we show that $\w_s(f,\d)_p$ and $\|f_{\d,r}-f\|_{\ell_p(X_n)}$ are not comparable in general case.
Moreover, for some classes of functions, we show that the quantity $\|f_{\d,r}-f\|_{\ell_p(X_n)}+\w_s(f,\d)_p$ provides sharper $L_p$-error estimates of approximation by the Lagrange interpolation polynomials $L_n(f)$ than the error of best one-sided approximation $\ww E_n(f)_p$ and the modulus $\tau_s(f,\d)_p$.

In what follows, we use the notation
$\, A \lesssim B,$ with $A,B\ge 0$, for the estimate
$\, A \le C\, B,$ where $\, C$ is a positive constant independent of
the essential variables in $\, A$ and $\, B$ (usually, $f$, $j$, and $n$).
If $\, A \lesssim B$ and $\, B \lesssim A$ simultaneously, we write $\, A \asymp B$.



\begin{example}\label{ex1}
  Consider  the Dirichlet function 
\begin{equation*}
  f(x)=\left\{
         \begin{array}{ll}
           1, & \hbox{$x\in \mathbb{Q}$,} \\
           0, & \hbox{otherwise}.
         \end{array}
       \right.
\end{equation*}
It is not difficult to see that
$$
\|f_{1/n}-f\|_{\ell_p(X_{n})}=\tau(f,1/n)_p=1
$$
while
$$
\w(f,1/n)_p=0.
$$
Moreover,
$$
\|f-L_n(f)\|_p=\|f_{1/(2n+1)}-f\|_{\ell_p(X_{2n+1})}+\w(f,1/n)_p=\tau(f,1/n)_p=1.
$$
\end{example}


For the following modification of the Dirichlet function, the averaged modulus of smoothness $\tau(f,1/n)_p$ is essentially bigger than $\|f-L_n(f)\|_p$.  

\begin{example}\label{ex1+}
  Let 
\begin{equation*}
  f(x)=\left\{
         \begin{array}{ll}
           1, & \hbox{$x=\frac ab\in \mathbb{Q}$, $(a,b)=1$ and $b$ is even,} \\
           0, & \hbox{otherwise}.
         \end{array}
       \right.
\end{equation*}
Then
$$
\|f_{1/(2n+1)}-f\|_{\ell_p(X_{2n+1})}=\w(f,1/n)_p=0
$$
while
$$
\tau(f,1/n)_p=1.
$$
Moreover,
$$
\|f-L_n(f)\|_p=\|f_{1/(2n+1)}-f\|_{\ell_p(X_{2n+1})}+\w(f,1/n)_p=0.
$$
\end{example}

%


In the next example, we show that $\|f_{1/n}-f\|_{\ell_p(X_{n})}$ can be essentially smaller than $\w(f,\d)_p$ and $\tau(f,\d)_p$.

\begin{example}
Let $1\le p<\infty$, $r\in \N$, and
$$
f(x)=\left\{
       \begin{array}{ll}
         0, & \hbox{$x\in\{0,1/2\}$,} \\
         1, & \hbox{$0<x<1/2$,} \\
         -1, & \hbox{$1/2<x<1$,}
       \end{array}
     \right.
$$
with $f(x+1)=f(x)$ for $x\in\R\setminus \T$.
Then
$$
\|f_{1/(2n)}-f\|_{\ell_p(X_{2n})}=0
$$
but
$$
\w_r(f,n^{-1})_p\asymp\tau_r(f,n^{-1})_p\asymp n^{-\frac1p}.
$$
\end{example}

We now show that the quantity  $\|f_{1/n}-f\|_{\ell_p(X_n)}$ can be used for the investigation of unbounded functions in $L_p(\T)$.
Note that the best one-sided approximation and in particular the averaged moduli of smoothness make sense only for bounded functions.


\begin{example}\label{ex2}
  Let $1\le p<\infty$, $0<\a<1/p$, $r\in \N$, and let
\begin{equation*}
  f(x)=\left\{
         \begin{array}{ll}
           0, & \hbox{$x=0$,} \\
           \frac1{x^\a}, & \hbox{$0<x<1$,}
         \end{array}
       \right.
\end{equation*}
with $f(x)=f(x+1)$ for $x\in \R\setminus\T$.
Then
\begin{equation*}
  \|f_{1/n}-f\|_{\ell_p(X_n)}\asymp \w_r(f,1/n)_p\asymp n^{-(1/p-\a)}.
\end{equation*}
Moreover, for $1<p<\infty$, we have
\begin{equation}\label{ex3.3++}
\|f-L_{n}(f)\|_p\asymp \|f_{1/(2n+1)}-f\|_{\ell_p(X_{2n+1})}+\w_r(f,1/n)_p \asymp E_n(f)_p\asymp n^{-(1/p-\a)}.
\end{equation}
\end{example}

\begin{proof}
Using the standard calculations and properties of moduli of smoothness, one can verify that
\begin{equation}\label{ex3.2}
\w_r(f,\d)_p\asymp \d^{1/p-\a}
\end{equation}
for sufficiently small $\d>0$. Simple calculations also yield
\begin{equation}\label{ex3.3}
  \|f_{1/n}-f\|_{\ell_p(X_n)}\asymp n^{-(1/p-\a)}.
\end{equation}
Indeed,
\begin{equation}\label{ex3.4}
  \|f_{1/n}-f\|_{\ell_p(X_n)}\ge n^{-1/p}|f_{1/n}(0)|=\frac1{(1-\a)2^{(1-\a)}}n^{-(1/p-\a)}+\mathcal{O}(n^{-1/p}).
\end{equation}
To obtain the upper estimate, we note that for $k\in [1,n-1]$, by the mean value theorem
\begin{equation*}
\begin{split}
   |f_{1/n}\(k/n\)-f\(k/n\)|\lesssim \frac{n^\a}{k^{\a+1}}.
\end{split}
\end{equation*}
Hence, $\|f_{1/n}-f\|_{\ell_p(X_n)}\lesssim n^{-(1/p-\a)}$,
which together with~\eqref{ex3.4} yields~\eqref{ex3.3}

Now, by Proposition~\ref{th6} and~\eqref{ex3.2}, \eqref{ex3.3}, we obtain
\begin{equation}\label{ex3.5}
\|f-L_n(f)\|_p\lesssim \|f_{1/(2n+1)}-f\|_{\ell_p(X_{2n+1})}+\w_2(f,1/(2n+1))_p\lesssim n^{-(1/p-\a)}.
\end{equation}
At the same time, by Lemma~\ref{lemR} and~\eqref{ex3.2}, we have
$$
\|f-L_n(f)\|_p\ge E_n(f)_p \asymp n^{-(1/p-\a)}.
$$
These relations and~\eqref{ex3.5} imply~\eqref{ex3.3++}.
%
\end{proof}

\bigskip

In the next two examples, we consider continuous oscillating functions. To simplify calculations, we use the modified Lagrange interpolation polynomials $L_n^*(f)$ given by
$$
L_n^*(f)(x)=2^{-n}\sum_{k=0}^{2^n-1}f(k2^{-n})D_{2^n}^*(x-k2^{-n}),
$$
where
$$
D_{2^n}^*(x)=\sum_{\nu=-2^{n-1}}^{2^{n-1}-1}e^{2\pi{\rm i}\nu x}.
$$

\begin{example}\label{pr4}
  Let $1\le p<\infty$, $0<\b<1/p$, $r\in \N$, and let
\begin{equation*}
  f(x)=\sum_{\ell=3}^\infty \ell^{-\b}(1-{4^{\ell}}|x-2^{-\ell}|)_+,\quad x\in \T,
\end{equation*}
with $f(x)=f(x+1)$ for $x\in \R\setminus\T$.
Then, for sufficiently large $n$,
\begin{equation}\label{pr4.1}
  \|f_{1/2^n}-f\|_{\ell_p(X_{2^n})}\asymp 2^{-n/p}n^{1/p-\b},
\end{equation}

\begin{equation}\label{pr4.2}
\tau_r(f,2^{-n})_p\asymp 2^{-n/p}n^{1/p-\b},
\end{equation}

\begin{equation}\label{pr4.3}
\w_r(f,2^{-n})_p\asymp 2^{-n/p}n^{-\b}.
\end{equation}
Moreover, for $1<p<\infty$, we have
\begin{equation}\label{pr4.4}
\|f-L_{2^n}^{*}(f)\|_p\asymp \ww E_{2^n}(f)_p \asymp 2^{-n/p}n^{1/p-\b}\quad\text{and}\quad E_{2^n}(f)_p \asymp 2^{-n/p}n^{-\b}.
\end{equation}

\end{example}

\begin{proof}
First, we estimate $\|f_{1/2^n}-f\|_{\ell_p(X_{2^n})}$. For simplicity, we assume that $n$ is odd. Denote $\vp_\ell(x)=\ell^{-\b}(1-{4^{\ell}}|x-2^{-\ell}|)_+$ and $y_\ell=\ell^{-\b}$, $\ell\ge 3$. Direct calculations show that
$$
f(0)=0,\quad f_{1/2^n}(0)=2^n\sum_{\ell=n+2}^\infty y_\ell 4^{-\ell}+y_{n+1}2^{n-1}4^{-n-1}\asymp y_{n+1}2^{-n-1},
$$
$$
f(2^{-n})=y_n,\quad f_{1/2^n}(2^{-n})=y_{n+1}2^{n-1}4^{n+1}+y_n2^n4^{-n}\asymp y_n 2^{-n},
$$
and
$$
f(2^{-\ell})=y_{\ell},\quad f_{1/2^n}(2^{-\ell})=y_\ell 2^{-(2\ell-n)}\quad\text{for}\quad \ell=\frac{n+1}2,\dots,n-1.
$$
Here, we also took into account that $2^{-\ell-1}+2^{-n-1}>2^{-\ell-1}+4^{-\ell-1}$  and $2^{-\ell}-2^{-n-1}\le 2^{-\ell}-4^{-\ell}$  for $\ell=\frac{n+1}2,\dots,n-1$.

Denote
$$
F=\sum_{\ell=\frac{n+1}{2}}^\infty \vp_\ell\quad\text{and}\quad\quad H=f-F.
$$
It is not difficult to see that
\begin{equation}\label{ex4.3}
  \|f_{1/2^n}-f\|_{\ell_p(X_{2^n})}=\|F_{1/2^n}-F\|_{\ell_p(X_{2^n})}+\|H_{1/2^n}-H\|_{\ell_p(X_{2^n})}
\end{equation}
and
\begin{equation}\label{ex4.4}
  \sum_{\nu=0}^{2^n-1}|F_{1/2^n}(\nu2^{-n})-F(\nu2^{-n})|^p\asymp y_{n+1}^p2^{-np}+\sum_{\ell=\frac{n+1}{2}}^ny_\ell^p(1-2^{-(2\ell-n)})^p \asymp n^{1-\b p},
\end{equation}
which yields 
\begin{equation}\label{ex4.5}
  \|F_{1/2^n}-F\|_{\ell_p(X_{2^n})}\asymp 2^{-n/p}n^{1/p-\b}.
\end{equation}
To estimate $\|H_{1/2^n}-H\|_{\ell_p(X_{2^n})}$, we introduce  the sets
$$
A_{\ell,n}=\{\nu\in [0,2^n-1]\,:\,(\nu2^{-n}-2^{-n-1},\nu2^{-n}+2^{-n-1})\cap (2^{-\ell}-4^{-\ell},2^{-\ell}+4^{-\ell})\neq \varnothing\}.
$$
We have
\begin{equation}\label{ex4.6}
  \begin{split}
     \sum_{\nu=0}^{2^n-1}|G_{1/2^n}(\nu2^{-n})-G(\nu2^{-n})|^p=\sum_{\ell=3}^{\frac{n-1}{2}}\sum_{\nu\in A_{\ell,n}}|(\vp_\ell)_{1/2^n}(\nu2^{-n})-\vp_\ell(\nu2^{-n})|^p.
  \end{split}
\end{equation}
Then, taking into account that $h_{\d}(x)=h(x)$ for any linear function $h$  and that
\begin{equation*}
(\vp_\ell)_{1/2^n}(2^{-\ell}\pm 4^{-\ell})=y_\ell 4^\ell 2^{-n-3},
\end{equation*}
\begin{equation*}
(\vp_\ell)_{1/2^n}(2^{-\ell})=y_\ell-y_\ell 4^\ell 2^{-n-2},
\end{equation*}
we obtain
\begin{equation*}
\begin{split}
   \sum_{\nu\in A_{\ell,n}}&|(\vp_\ell)_{1/2^n}(\nu2^{-n})-\vp_\ell(\nu2^{-n})|^p=|(\vp_\ell)_{1/2^n}(2^{-\ell}-4^{-\ell})|^p\\
&+|(\vp_\ell)_{1/2^n}(2^{-\ell})-(\vp_\ell)(2^{-\ell})|^p+|(\vp_\ell)_{1/2^n}(2^{-\ell}+4^{-\ell})|^p\\
&=2(y_\ell 4^\ell 2^{-n-3})^p+(y_\ell 4^\ell 2^{-n-2})^p,
\end{split}
\end{equation*}
which together with~\eqref{ex4.6} implies 
\begin{equation}\label{ex4.8}
  \|G_{1/2^n}-G\|_{\ell_p(X_{2^n})}\asymp \(2^{-n}\sum_{\ell=3}^{\frac{n-1}{2}}(y_\ell 4^\ell 2^{-n})^p\)^{1/p}\asymp 2^{-n/p}n^{-\b}.
\end{equation}
Thus, combining~\eqref{ex4.3}, \eqref{ex4.5}, and~\eqref{ex4.8}, we get~\eqref{pr4.1}.

Let us prove~\eqref{pr4.3}. By $(c_\w)$ and $(e_\w)$, we obtain
\begin{equation}\label{ex4.12}
\begin{split}
\w(f,2^{-n})_p&\le \w(f,2^{-n})_{L_p[0,2^{-\frac{n+1}2}]}+\w(f,2^{-n})_{L_p[2^{-\frac{n+1}2},1]}\\
&\lesssim \|f\|_{L_p[0,2^{-\frac{n+1}2}]}+2^{-n}\|f'\|_{L_p[2^{-\frac{n+1}2},1]}\\
&\lesssim \sum_{\ell=\frac{n+1}2}^\infty (y_\ell)^p 4^{-\ell}+2^{-n}\bigg(\sum_{\ell=3}^{\frac{n-1}{2}} (y_\ell 4^\ell)^p4^{-\ell}\bigg)^{1/p}\asymp 2^{-n/p}n^{-\b}.
\end{split}
\end{equation}
We now estimate $\w_2(f,2^{-n})_p$ from below for $n>5$. We get
\begin{equation}\label{ex4.13}
\begin{split}
\w_2(f,2^{-n})_p^p&\ge \|\D_{2^{-n-2}}^2 f\|_p^p\\
&\ge \sum_{\ell=\frac{n+3}{2}}^{n-1}\int_{2^{-\ell}-4^{-\ell}}^{2^{-\ell}+4^{-\ell}}|f(x+2^{-n-1}) -2f(x+2^{-n-2})+ f(x)|^pdx\\&=\sum_{\ell=\frac{n+3}{2}}^{n-1}\int_{2^{-\ell}-4^{-\ell}}^{2^{-\ell}+4^{-\ell}}|f(x)|^pdx,
\end{split}
\end{equation}
where the above equality follows from the fact that $f(x)=0$ for $x\in [2^{-\ell-1}+4^{-\ell-1},2^{-\ell}-4^{-\ell}]\cup [2^{-\ell}+4^{-\ell},2^{-\ell+1}-4^{-\ell+1}]$. Indeed, if $\ell=\frac{n+3}2,\dots,n-1$, then
$[2^{-\ell-1}+4^{-\ell-1}-2^{-n-1},2^{-\ell}-4^{-\ell}-2^{-n-2}]\subset [2^{-\ell-1}+4^{-\ell-1},2^{-\ell}-4^{-\ell}]$, which together with the inequality $2^{-\ell}+4^{-\ell}\le 2^{-\ell+1}-4^{\ell+1}-2^{-n-1}$ implies that $f(x+2^{-n-2})=f(x+2^{-n-1})=0$ for $x\in [2^{-\ell}-4^{-\ell},2^{-\ell}+4^{-\ell}]$.
Next, we have
\begin{equation*}
\begin{split}
\int_{2^{-\ell}-4^{-\ell}}^{2^{-\ell}+4^{-\ell}}|f(x)|^pdx\ge y_\ell^p\int_{2^{-\ell}-4^{-\ell}}^{2^{-\ell}}|4^\ell(x-2^{-\ell})+1|^pdx=
\frac{y_\ell^p}{(p+1)4^{\ell}},
\end{split}
\end{equation*}
which together with~\eqref{ex4.13} and~\eqref{ex4.12} yields
\begin{equation}\label{zvez}
  \w_2(f,2^{-n})_p\asymp \w(f,2^{-n})_p\asymp 2^{-n/p}n^{-\b}.
\end{equation}
Thus, by Lemma~\ref{lemR} and~\eqref{zvez}, we get~\eqref{pr4.3}. 

Now, we prove~\eqref{pr4.2}. We have
\begin{equation}\label{ex4.10}
\begin{split}
    \tau_2(f,2^{-n})_p^p&\ge \sum_{\ell=\frac{n+3}{2}}^{n-1} \int_{2^{-\ell}}^{2^{-\ell}+2^{-n-1}}\w_2(f,x,2^{-n})^pdx\\
&\ge \sum_{\ell=\frac{n+3}{2}}^{n-1} \int_{2^{-\ell}}^{2^{-\ell}+2^{-n-1}}|\D_{4^{-\ell}}^2f(2^{-\ell})|^p dx\\
&\ge 2^{-n-1}\sum_{\ell=\frac{n+3}{2}}^{n-1} y_\ell^p\asymp 2^{-n}n^{1-\b p}.
\end{split}
\end{equation}
In the above formula, we use the fact that $f(2^{-\ell}+4^{-\ell})=f(2^{-\ell}+2\cdot4^{-\ell})=0$ and $2^{-\ell},2^{-\ell}+2\cdot 4^{-\ell}\in [x-2^{-n-1},x+2^{-n-1}]$ for all $x\in [2^{-\ell},2^{-\ell}+2^{-n-1}]$ and $\ell=\frac{n+3}2,\dots,n-1$.
Next,
\begin{equation}\label{ex4.11}
\begin{split}
    \tau(f,2^{-n})_p^p&\lesssim \int_0^{2^{-n}} \w(f,x,2^{-n-1})^pdx\\
&+\bigg(\sum_{\ell=\frac{n+1}{2}}^{n-1}\,\,+\sum_{\ell=3}^{\frac{n-1}{2}}\,\,\bigg)
\int_{2^{-\ell}-4^{-\ell}-2^{-n-1}}^{2^{-\ell}+4^{-\ell}+2^{-n-1}}\w(f,x,2^{-n-1})^pdx\\
&\lesssim 2^{-n}y_n^p+2^{-n}\sum_{\ell=\frac{n+1}{2}}^{n-1}y_\ell^p+\sum_{\ell=3}^{\frac{n-1}{2}}(4^\ell y_\ell 2^{-n})^p 4^{-\ell}\asymp 2^{-n}n^{1-\b p}.
\end{split}
\end{equation}
Thus, inequalities~\eqref{ex4.10} and~\eqref{ex4.11} imply that
$$
\tau(f,2^{-n})_p\asymp \tau_2(f,2^{-n})_p\asymp 2^{-n}n^{1-\b p}.
$$
These equivalences and Lemma~\ref{lemR+} yield~\eqref{pr4.2}.

Finally, relations~\eqref{pr4.4} follow from Theorem~\ref{th1+}, relations~\eqref{pr4.1}--\eqref{pr4.3} and Lemmas~\ref{lemR} and~\ref{lemR+}.
\end{proof}

Slightly modifying the function in Example~\ref{pr4}, we can construct a continuous function $f$ such that the quantity $\|f_{1/2^n}-f\|_{\ell_p(X_{2^n})}+\w_r(f,2^{-n})_p$ provides sharp estimate for $\|f-L_{2^n}^{*}(f)\|_p$ while $\tau_r(f,2^{-n})_p$ and $\ww E_{2^n}(f)_p$ do not. 

\begin{example}\label{pr5}
  Let $1\le p<\infty$, $0<\b<1/p$, $r\in \N$, and
\begin{equation*}
  f(x)=\sum_{\ell=4}^\infty \ell^{-\b}(1-{4^{\ell}}|x-(2^{-\ell}+4^{-\ell})|)_+,\quad x\in \T,
\end{equation*}
with $f(x)=f(x+1)$ for $x\in \R\setminus\T$.
Then, for sufficiently large $n$,
\begin{equation}\label{pr5.1}
  \|f_{1/2^n}-f\|_{\ell_p(X_{2^n})}\asymp 2^{-n/p}n^{-\b},
\end{equation}

\begin{equation*}
\w_r(f,2^{-n})_p\asymp 2^{-n/p}n^{-\b},
\end{equation*}

\begin{equation*}
\tau_r(f,2^{-n})_p\asymp 2^{-n/p}n^{1/p-\b}.
\end{equation*}
Moreover, for $1<p<\infty$, we have
\begin{equation*}
\|f-L_{2^n}^{*}(f)\|_p\asymp E_{2^n}(f)_p\asymp  2^{-n/p}n^{-\b}\quad\text{but}\quad \ww E_{2^n}(f)_p \asymp 2^{-n/p}n^{1/p-\b}.
\end{equation*}
\end{example}

\begin{proof}
  The assertion can be established by repeating step by step the proof of Example~\ref{pr4}. Here, we only mention that unlike to the previous calculations, we have
$f(2^{-\ell})=0$ for all $l=\frac{n+1}{2},\dots,n-1$ and, therefore,
the corresponding analogue of~\eqref{ex4.4} has the form
\begin{equation*}
  \sum_{\nu=0}^{2^n-1}|F_{1/2^n}(\nu2^{-n})-F(\nu2^{-n})|^p\asymp y_{n+1}^p2^{-np}+\sum_{\ell=\frac{n+1}{2}}^ny_\ell^p(2^{-(2\ell-n)})^p \asymp n^{-\b p},
\end{equation*}
which results in~\eqref{pr5.1}.
\end{proof}

\bigskip




\begin{thebibliography}{16}
{


\bibitem{BXZ92} P. B. Borwein,   T. F. Xie,  S. P. Zhou,
On approximation by trigonometric Lagrange interpolating polynomials II,
Bull. Austral. Math. Soc. \textbf{45} (1992), no.~2, 215--221.

\bibitem{CZ93_2} C. K. Chui, X. C. Shen, L. Zhong, On Lagrange polynomial quasi-interpolation. Topics in polynomials of one and several variables and their applications, 125--141, World Sci. Publ., River Edge, NJ, 1993.

\bibitem{CZ93} C. K. Chui, X. C. Shen, L. Zhong,  On Lagrange interpolation at disturbed roots of unity, Trans. Amer. Math. Soc. \textbf{336} (1993), no.~2, 817--830.

\bibitem{CZ99} C. Chui, L. Zhong, Polynomial interpolation and Marcinkiewicz-Zygmund inequalities on the unit circle,
J. Math. Anal. Appl. \textbf{233} (1999), no.~1, 387--405.

\bibitem{DL} {R. A. DeVore, G. G. Lorentz},  Constructive Approximation, Springer-Verlag, New York, 1993.

\bibitem{DI93} Z. Ditzian, K.G. Ivanov, Strong converse inequalities, J. Anal. Math. \textbf{61} (1993), 61--111.

\bibitem{H83} V. Kh. Khristov,  Mean convergence of interpolation polynomials of periodic functions. (Russian) Pliska Stud. Math. Bulgar. \textbf{5} (1983), 14--22.


\bibitem{H} V. H. Hristov, Best onesided approximation and mean approximation by interpolation polynomials
of periodic functions, Math. Balkanica, New Series \textbf{3}: Fasc. 3-4 (1989), 418--429.

\bibitem{HI90} V. H. Hristov, K. G. Ivanov, Realization of $K$-functionals on subsets and constrained approximation,
Math. Balkanica (New Series) \textbf{4}  (1990), no.~3,  236--257.

\bibitem{HY95} Y. Hu, X. M. Yu, Discrete modulus of smoothness of splines with equally spaced knots, SIAM J. Numer. Anal. \textbf{32} (1995), no.~5, 1428--1435.

\bibitem{HL} Y. Hu, Y. Liu, On equivalence of moduli of smoothness of polynomials in $L_p, 0 < p \le \infty$, J.
Approx. Theory \textbf{136} (2005), no. 2, 182--197.

\bibitem{K12} Yu. S. Kolomoitsev, Approximation properties of generalized Bochner-Riesz means in the Hardy spaces $H_p,\ 0<p\leq1$. (Russian) Mat. Sb. \textbf{203} (2012), no. 8, 79--96; translation in Sb. Math. \textbf{203} (2012), no.~7-8, 1151--1168.


\bibitem{K17} Yu. Kolomoitsev, On moduli of smoothness and averaged differences of fractional order, Fract. Calc. Appl. Anal. \textbf{20} (2017), no.~4, 988--1009.

\bibitem{KKS20} Yu. Kolomoitsev, A. Krivoshein, M. Skopina, Approximation by periodic multivariate quasi-projection operators, J. Math. Anal. Appl. \textbf{489} (2020), no.~2, 124192.

\bibitem{KLP}    Yu. Kolomoitsev, T. Lomako, J. Prestin, On $L_p$-error of bivariate polynomial interpolation on the square, J. Approx. Theory \textbf{229} (2018), 13--35.

\bibitem{KP21}  Yu. Kolomoitsev, J. Prestin, Approximation properties of periodic multivariate quasi-interpolation operators, J. Approx. Theory \textbf{270} (2021), 105631.

\bibitem{KS21}  Yu. Kolomoitsev, M. Skopina, Approximation by multivariate quasi-projection operators and Fourier multipliers, Appl. Math. Comput. \textbf{400} (2021), 125955.

\bibitem{KT20} Yu. S. Kolomoitsev, S. Yu. Tikhonov, Smoothness of functions versus smoothness of approximation processes, Bull. Math. Sci. \textbf{10} (2020), no.~3, 2030002.

\bibitem{KT20_2} Yu. Kolomoitsev, S. Tikhonov,  Properties of moduli of smoothness in $L_p(\R^d)$,
J. Approx. Theory \textbf{257} (2020), 105423.

\bibitem{KT21} Yu. Kolomoitsev, S. Tikhonov,  Hardy-Littlewood and Ulyanov inequalities, Mem. Amer. Math. Soc. \textbf{271}  (2021), no.~1325.

\bibitem{KT12} Yu. S. Kolomoitsev, R. M. Trigub, On the nonclassical approximation method for periodic functions by trigonometric polynomials (Russian) Ukr. Mat. Visn. \textbf{9} (2012), no. 3, 356--374; translation in J.~Math. Sci. \textbf{188} (2013), no. 2, 113--127.

\bibitem{L40} S. Lozinski, \"Uber trigonometrische Interpolation, Izv. Akad. Nauk SSSR Ser. Mat. \textbf{4} (1940), no.~2, 229--248.

\bibitem{LMN} {D. S. Lubinsky, A. Mate, P. Nevai}, Quadrature sums involving $p$th powers of polynomials, {SIAM J. Math. Anal.} \textbf{18} (1987), 53--544.


\bibitem{MS09} J. Marzo, K. Seip, The Kadets 1/4 theorem for polynomials, Math. Scand.
\textbf{104} (2009), no.~2, 311--318.


%




\bibitem{O86} K. I. Oskolkov, Inequalities of the "large sieve'' type and applications to problems of trigonometric approximation, Anal. Math. \textbf{12} (1986), no.~2, 143--166.

%
%
%
%
%
%
%

\bibitem{Pr84} J. Prestin, Trigonometric interpolation of functions of bounded variation in: Constructive Theory of Functions (Proc. of the Intern. Conf. on Constr. Theory of Functions, Varna 1984, Ed. B. Sendov) Sofia 1984, 699--703.


\bibitem{PQ92} J. Prestin, E. Quak,
On interpolation and best one-sided approximation by splines in $L_p$, Approximation theory (Memphis, TN, 1991), 409--420,
Lecture Notes in Pure and Appl. Math. \textbf{138}, Dekker, New York, 1992.


\bibitem{PrXu} J. Prestin, Y. Xu, Convergence rate for trigonometric interpolation of non-smooth functions, J. Approx. Theory \textbf{77} (1994), no. 2, 113--122.
%

\bibitem{P83} V. A. Popov, Function spaces, generated by the averaged moduli of smoothness, PLISKA, Stud. Math. Bulg \textbf{5} (1983), 132--143.

\bibitem{P84} V. A. Popov, A one-sided $K$-functional and its interpolation spaces. (Russian) International conference on analytical methods in number theory and analysis (Moscow, 1981). Trudy Mat. Inst. Steklov. \textbf{163} (1984), 196--199.

\bibitem{R94} R. Rathore, The problem of A. F. Timan on the precise order of decrease of the best approximations, J.~Approx. Theory \textbf{77} (1994), no.~2, 153--166.


\bibitem{SP} B. Sendov, V. A. Popov, The Averaged Moduli of Smoothness. Chichester: John Wiley \& Sons Ltd., 1988.


\bibitem{timan} A. F. Timan, Theory of Approximation of Functions of a Real Variable, Pergamon Press, Oxford, London, New York, Paris, 1963.

%
%
%
%

\bibitem{T80} R. M. Trigub,  Absolute convergence of Fourier integrals, summability of Fourier series and approximation by polynomials of functions on a torus (Russian), Izv. Akad. Nauk SSSR Ser. Mat. \textbf{44} (1980), no.~6, 1378--1409.


\bibitem{T13} R. M. Trigub, The exact order of approximation to periodic functions by Bernstein-Stechkin polynomials, Sb. Math. \textbf{204} (2013), no.~12, 1819--1838.


\bibitem{TB}  R. M.~Trigub, E. S.~Belinsky, Fourier Analysis
and Approximation of Functions.  Kluwer-Springer (2004).

%


\bibitem{V82} P. V\'ertesi, On the almost everywhere divergence of Lagrange interpolation (complex and trigonometric cases),
Acta Math. Acad. Sci. Hungar. \textbf{39} (1982), no.~4, 367--377.


\bibitem{WS03} J. Wang, S. Zhou, Some converse results on one-sided approximation: justifications,
Anal. Theory Appl. \textbf{19} (2003), no.~3, 280--288.



\bibitem{Z} A. Zygmund, Trigonometric Series, Vol. II,  Cambridge University Press,  1968.

}

\end{thebibliography}
\end{document}